\numberwithin{equation}{section}
\theoremstyle{plain} 
\newtheorem{thm}{Theorem}[section] 
\newtheorem{lem}[thm]{Lemma} 
\newtheorem{prop}[thm]{Proposition} 
\newtheorem{rmk}[thm]{Remark} 
\newtheorem{dfn}[thm]{Definition}
\definecolor{custom-blue}{RGB}{0,99,166} 
\begin{document}

\author{$\text{\sc{Antonio Giuseppe Grimaldi}}^\clubsuit$ \sc{and} $\text{\sc{Stefania Russo}}^\spadesuit$}

\title{Regularity for minimizers of degenerate, non-autonomous, orthotropic integral functionals}

\maketitle
\maketitle

\begin{abstract}
{We prove the higher differentiability of integer order of  locally bounded minimizers of integral functionals of the form
\begin{equation*}
 \mathcal{F}(u,\Omega):= \,\sum_{i=1}^{n} \dfrac{1}{p_i}\displaystyle   \int_\Omega  \, a_i(x) \lvert u_{x_i} \rvert^{p_i} dx- \int_\Omega \omega(x)u(x)  dx,
\end{equation*}
where the exponents \( p_i \geq 2 \) and the coefficients \( a_i(x) \) satisfy a suitable Sobolev regularity. The main novelty consists in dealing with non-autonomous, anisotropic functionals, which depend also on the solution.}
\end{abstract}

\medskip
\noindent \textbf{Keywords:} Anisotropic growth,  Sobolev
 regularity, Higher differentiability 
\medskip \\
\medskip
\noindent \textbf{MSC 2020:} 35J70, 35B65, 49K20.

\let\thefootnote\relax\footnotetext{
			\small $^{\clubsuit}$Dipartimento di Ingegneria, Università degli Studi di Napoli ``Parthenope'',
Centro Direzionale Isola C4, 80143 Napoli, Italy. E-mail: \textit{antoniogiuseppe.grimaldi@collaboratore.uniparthenope.it}}
\let\thefootnote\relax\footnotetext{
			\small $^{\spadesuit}$Dipartimento di Matematica e Applicazioni ``R. Caccioppoli'', Università degli Studi di Napoli ``Federico II'', Via Cintia, 80126 Napoli,
 Italy. E-mail: \textit{stefania.russo3@unina.it}}

\section{Introduction}
The aim of this paper is to investigate the differentiability properties of locally bounded minimizers of convex functionals, exhibiting an orthotropic structure, depending also on the $x$-variable and on the minimizer itself. More precisely, we consider
\begin{equation}\label{functional}
 \mathcal{F}(u,\Omega):= \, \int_\Omega \left[f(x, Du)-\omega(x)u(x) \right] dx,
\end{equation}
\noindent where $\Omega \subset \mathbb{R}^n$ is an open subset, $n\geq 2$, {$u : \Omega \to \mathbb{R}$}, and $f:\Omega\times\mathbb{R}^{ n}\to [0,+\infty)$ has the following form 
\begin{equation}
    f= f(x, \xi)= \sum_{i=1}^{n} \,     \dfrac{a_i(x)}{p_i} \lvert \xi_i \rvert^{p_i}, \label{integrand}
\end{equation}
with $\xi=(\xi_1,\xi_2,...,\xi_n) \in \mathbb{R}^n$.
Here, we assume that $2\leq p_1  \leq \cdots \leq p_n$  and $a_i(x):\Omega \to \mathbb{R}$ are bounded, measurable coefficients such that, for every $i = 1,...,n$,   $$0< \lambda \leq a_i (x) \leq \Lambda  ,$$ for some positive constants $\lambda$, $ \Lambda$ and for a.e.\ $ x \in \Omega$. 

One can easily check  that there exist positive constants $ L, l$ depending on $\lambda, \Lambda, p_i$ such that
\begin{equation}\label{A2}
    \langle D_\xi f(x, \xi)-D_\xi f(x, \eta),\xi-\eta\rangle  \geq  l \sum_{i=1}^{n}(|\xi_i|^2+|\eta_i |^2)^\frac{p_i -2}{2}|\xi_i - \eta_i |^2 \tag{A1}
\end{equation}
\begin{equation}\label{A3}
     |D_\xi f(x, \xi)-D_\xi f(x, \eta)| \leq  L \sum_{i=1}^{n}(|\xi_i |^2+|\eta_i |^2)^\frac{p_i -2}{2}|\xi_i - \eta_i | ,\tag{A2}
\end{equation}

\noindent for a.e.\ $x \in \Omega$ and all $\xi, \eta \in \mathbb{R}^{ n}$.
Concerning the dependence on the $x$-variable, we assume that $$a_i(x) \in W^{1,r}_{\mathrm{loc}}(\Omega) \qquad \forall i=1, \dots,n ,$$ so that,
denoting by $g(x)=  \underset{i}{\max} \{|Da_i(x)| \}$, we have that $g(x) \in L_{\mathrm{loc}}^r (\Omega)$ and the following inequality
\begin{equation}\label{A4}
    |D_\xi f(x,\xi)-D_\xi f(y, \xi)| \leq |x-y|\,(g(x)+g(y)) \, \sum_{i=1}^{n} |\xi_i|^{p_i-1} \tag{A3}
\end{equation}
\noindent holds for a.e.\ $x,y \in \Omega$ and every $\xi \in \mathbb{R}^{n}$.

For the exponents $p_i$, we shall denote by $\mathbf{p} = (p_1, p_2, \ldots, p_n)$ and by
\begin{equation*}
    W^{1,\mathbf{p}}_{\mathrm{loc}}(\Omega) = \left\{ u \in W^{1,1}_{\mathrm{loc}}(\Omega) : u_{x_i} \in L^{p_i}_{\mathrm{loc}}(\Omega), \, i = 1, \ldots, n \right\},
    \end{equation*}
the  corresponding anisotropic Sobolev space.\\
We denote by $\overline{p}$ the harmonic average of $p_i$ and by $\overline{p}^*$ the Sobolev conjugate exponent of $\overline{p}$, i.e.
 \begin{align}\label{definizionePi}
        \frac{1}{\overline{p}} = \frac{1}{n} \sum_{i=1}^{n} \frac{1}{p_i}, \qquad \overline{p}^* = \begin{cases}\frac{n \overline{p}}{n- \overline{p}} & \quad \text{if} \ \overline{p} <n,\\
        \text{any exponent in } [1,\infty) & \quad \text{if} \  \overline{p} \ge n.
        \end{cases}
    \end{align}
   For an exponent $1 \le s \le \infty$, we will denote by $s'$ the H\"older conjugate of $s$.\\
In the following, we will assume that the function $\omega(x) $ belongs to the Sobolev space  $W^{1,{\frac{\mathbf{p}+2}{\mathbf{p}+1}}}_{\mathrm{loc}}(\Omega)$, with ${\frac{\mathbf{p}+2}{\mathbf{p}+1}} = (\frac{p_1+2}{p_1+1}, \frac{p_2+2}{p_2+1}, \ldots, \frac{p_n+2}{p_n+1})$.\\

\noindent Let us recall the following definition of local minimizer.
\begin{dfn}
{ \em A function {$u\in W_{\rm loc}^{1,\mathbf{p}}(\Omega)$} is a  local minimizer of
\eqref{functional} if, for every open subset $\tilde{\Omega} \Subset \Omega$, we have   
$\mathcal{F}(u;\tilde{\Omega})\le  \mathcal{F}(\varphi;\tilde{\Omega})$ 
for all $\varphi\in \mathcal{C}_0^{\infty}(\tilde{\Omega})$.}
\end{dfn}
The energy densities $f$ that satisfy anisotropic growth condition fit into the wider context of those with $(p,q)$-growth condition as follows
$$|\xi|^p \leq f(x,\xi) \leq (1 + |\xi|^2)^{\frac{q}{2}}, \qquad1<p<q, $$
where $p= c(p_i)$ and $q= \max \{p_i : i=1, \dots, n \}.$\\
The study of regularity properties of local minimizers to functionals with non-standard growth started with the pioneering papers by Marcellini \cite{MarcelliniEs1, MarcelliniEs2}.
 It is now well understood that, in order to obtain some regularity for the local minimizers of functionals with $(p,q)$-growth, $p<q$,  a restriction between the exponents $p$ and $q$ must be imposed (see the counterexamples in \cite{Giaquinta,MarcelliniEs3}).  Indeed, the gap $\frac{q}{p}$ cannot differ to much from $1$ and sufficient conditions to the regularity can be expressed as
$$\frac{q}{p}\le c(n)\displaystyle \to 1\,\,\textnormal{as}\,\,\,\,n\to \infty,$$
according to \cite{MarcelliniEs1,MarcelliniEs2}.

In recent years there has been a considerable of interest in variational problems with $(p,q)$-growth conditions (see, for example, \cite{Adi,Barone,6,12,SuggeritoRefery,Ming1,Ming2,Ming3,Ming4,Ele1,Ele2,Ele3,25,Hasto,Koch,16} and references therein). It is impossible to give a complete overview on the subject, however we refer the interested reader to the recent survey \cite{surveyMar1,surveyMing}.\\
Compared to the above mentioned papers, the difference with our work lies in the fact that the behavior of our functional at \eqref{functional} depends on the components of the gradient and, although they exhibit $(p,q)$-growth, they need to be treated with appropriate arguments.\\
Actually, the regularity of local minimizers of functionals with orthotropic structure as in \eqref{functional} is a well-studied problem: some results can be found in \cite{6,5,LemmaBrasco,Carozza,17,53,russo}, for the homogeneous case \( \omega = 0 \), and for the non-homogeneous one, we refer to  \cite{BraTau,CMM}.
\\
In the autonomous case, i.e.\ $a_i(x) \equiv 1$,  for all $i=1,...,n$, it has been proved in \cite{Brascop12}, for the sub-quadratic case  with $\omega \in L^{1+ \frac{2}{p_1}}_{\mathrm{loc}}(\Omega)$,
that any local minimizer \( u \in W^{1,\mathbf{p}}_{\text{loc}}(\Omega) \cap L^\infty_{\text{loc}}(\Omega) \)  satisfies 
\begin{equation}
    |u_{x_i}|^{\frac{p_i - 2}{2}}  \, u_{x_i} \in W^{1,2}_{\text{loc}}(\Omega), \quad i = 1, \dots,n. \label{diffproperties}
\end{equation}
For the super-quadratic case, Brasco et al.\ \cite{BraTau} consider the following widely degenerate orthotropic functional 
$$F(u; \Omega) = \sum_{i=1}^{n} \int_{\Omega} \frac{1}{p_i} \left( |u_{x_i}| - \delta_i \right)^{p_i} \, dx - \int_{\Omega} \omega \,u \, dx, $$
with $\delta_i \ge 0$, $p_i \ge 2$, $ u \in W^{1,\mathbf{p}}_{\text{loc}}(\Omega)\cap L^\infty_{\text{loc}}(\Omega)$ and 
$ \omega \in W^{1,\mathbf{p'}}_{\text{loc}}(\Omega), $ $\mathbf{p'}=(p_1', \cdots, p_n')$, where $p_i'$ is the H\"older conjugate of $p_i$.
The authors proved the higher differentiability of the local minimizers, that is
\[
\left( |u_{x_i}| - \delta_i \right)^\frac{p_i}{2}_+ \frac{u_{x_i}}{|u_{x_i}|}  \in W^{1,2}_{\text{loc}}(\Omega), \quad i = 1, \dots, n.
\]
In particular, for local weak solutions of the anisotropic orthotropic $p$-Laplace equation (i.e. the case $\delta_i = 0$), they get the regularity at \eqref{diffproperties}.

 In the paper \cite{16} by Marcellini, it is clearly stated that currently there are few known regularity results for problems of this type, in relation to the properties of the coefficients $a_i(x)$. Indeed, as far as we know, few regularity results are available for the local minimizers of functionals with an orthotropic structure in the so-called non-autonomous case (see \cite{feopass,russo}).
More precisely, in the super-quadratic case it has been recently proved in \cite{russo} that, assuming the condition $p_n < p_1+2$ and Sobolev regularity on the coefficients $a_i(x)$ of the type $W^{1,r}$, with $r > p_n+2$, locally bounded minimizers $u$ of the functional \eqref{functional}, with $\omega  \equiv 0$, satisfy the higher differentiability properties at \eqref{diffproperties}.\\
\noindent Here, the main novelty with respect to the paper \cite{russo} is that we allow the presence of a forcing term $\omega$. We are interested in the conditions that must be imposed on the function $\omega$ in order to achieve analogous higher differentiability results for the solutions. \\
 Actually, our main result consists in proving that a suitable Sobolev regularity of the coefficients $a_i(x)$ and of the forcing term $\omega$ transfers into a higher differentiability of integer order for the local minimizers of \eqref{functional}. Moreover, we will require a weaker differentiability assumption on $\omega$ with respect to the one made in \cite{BraTau}. Indeed,  since $\frac{p_i+2}{p_i+1} < p_i'$, we have the embedding $W^{1, \mathbf{p'}}_{\mathrm{loc}}(\Omega) \hookrightarrow W^{1,{\frac{\mathbf{p}+2} {\mathbf{p}+1}}}_{\mathrm{loc}}(\Omega)$.
 
We recall that recent studies have shown that the weak differentiability of the map \( D_\xi f(x,\xi) \), whether of integer or fractional order with respect to the \( x \)-variable, is a sufficient condition to achieve higher differentiability (see \cite{cup, Torricelli, 25,40} for the case of Sobolev spaces with integer order and \cite{ 2,15, Grimaldi1, Ipocoana1, Russo} for the fractional one) both in case of standard and non standard growth. \\
In particular, the general requirement for obtaining higher differentiability for local minimizers is a Sobolev-type regularity for the coefficients with an exponent
$r$ that is greater than or equal to the dimension 
$n$. When dealing with bounded minimizers, the situation changes, and sufficient assumptions on the Sobolev regularity of the coefficients can be made independently of the dimension (see \cite{CKP, Colombo, cup,25}). Our result follows this approach. Even though the order of summability of the coefficients depends on the dimension $n$, it does so only through the bound that allows us to handle locally bounded minimizers.

{In the statement our main result, we shall use the  auxiliary function $V_{p}(\xi)$  defined by
\begin{equation}\label{DefVp}
    V_{p}(\xi):= |\xi|^{\frac{p-2}{2}} \xi,
    \end{equation}
for all $\xi\in \mathbb{R}^{n}$. 

We will denote by
 \begin{align}\label{definizionet}
        \frac{1}{\overline{q}} = \frac{1}{n} \sum_{i=1}^{n} \frac{p_i+1}{p_i+2}, \qquad t: = \begin{cases}\frac{n \overline{q}}{n- \overline{q}} & \quad \text{if} \ \overline{q} <n,\\
        \text{any exponent in } \left(\left(\frac{\overline{p}^*}{p_n}\right)',\infty \right) & \quad \text{if} \  \overline{q} \ge n.
        \end{cases}
    \end{align}

Our aim is to prove the following
\begin{thm}\label{thmBfinito}
Let $\omega \in  W_{\mathrm{loc}}^{1,\frac{\mathbf{p}+2}{\mathbf{p}+1}}(\Omega) $ and let $u \in {W_{\mathrm{loc}}^{1,\mathbf{p}}}(\Omega)$ be a local minimizer of \eqref{functional} under assumptions \eqref{A2}--\eqref{A4}, with exponents $p_i\geq 2, \forall i=1,\dots,n $, such that
\begin{equation}\label{Ipotesip}
p_n <
\begin{cases}
    \min \Big\{ \dfrac{\overline{p}^*}{t'} ,p_1 +2 \Big\} \, &\text{if } \overline{p} < n,  \\
     p_1 +2 \quad &\text{otherwise},
    \end{cases}
\end{equation}
and with a function $g \in {L^{r}_{\mathrm{loc}}(\Omega)}$, where $r$ satisfies the condition
\begin{equation}\label{Ipotesir}
   r> p_n + 2 .
\end{equation} 
Then,
\begin{equation*}
{V_{p_i}(u_{x_i}) \in W^{1,2}_{\mathrm{loc}}(\Omega)} \qquad\forall i=1,\dots,n,
\end{equation*}
and the following estimates
\begin{align*}
    &\sum_{i=1}^{n} \left(  \int_{B_{R/4}} |u_{x_i}|^{p_i +2 } dx \right) \notag\\
    &\leq c  \left( 1+\sum_{i=1}^{n} \left(\Vert u_{x_i} \Vert_{L^{p_i}(B_R)} + {\Vert \omega_{x_i} \Vert_{L^\frac{p_i+2}{p_i+1}(B_{R})}} \right) +\Vert \omega\Vert_{L^t(B_R)}+ \Vert g \Vert_{L^r (B_{R})} \right)^\sigma 
\end{align*}
and
\begin{align*}
        &\sum_{i=1}^{n}  \left( \int_{B_{R/4}} |u_{x_i x_j}|^2|u_{x_i}|^{p_i-2}dx \right) \notag\\
         &\leq c  \left( 1+\sum_{i=1}^{n} \left(\Vert u_{x_i} \Vert_{L^{p_i}(B_R)} + {\Vert \omega_{x_i} \Vert_{L^\frac{p_i+2}{p_i+1}(B_{R})}} \right) +\Vert \omega\Vert_{L^t(B_R)}+ \Vert g \Vert_{L^r (B_{R})} \right)^\sigma 
\end{align*}
hold for every pair of concentric balls $B_{R/4} \subset B_{R} \Subset \Omega$, where $c = c(n,p_i,\lambda, \Lambda,R, \Vert u \Vert_{\infty})$ and $\sigma= \sigma (n,p_i, r)$ are positive constants.
\end{thm}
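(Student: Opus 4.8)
The plan is to establish the higher differentiability by a standard difference-quotient argument applied to the Euler--Lagrange system, together with an a priori $L^\infty$-based iteration to absorb the bad terms coming from the $(p,q)$-anisotropy. First I would regularize the problem (e.g.\ approximating $f$ by a non-degenerate orthotropic integrand $f_\varepsilon$ with $(|\xi_i|^2+\varepsilon)$ in place of $|\xi_i|^2$, and smoothing the coefficients $a_i$ and the datum $\omega$), so that the minimizer $u_\varepsilon$ of the regularized functional is smooth enough to differentiate the equation twice; all estimates will be made uniform in $\varepsilon$ and then passed to the limit. For the regularized minimizer, I would test the differentiated Euler--Lagrange equation with $\tau_{s,h}(\eta^2 \tau_{s,h} u_\varepsilon)$, where $\tau_{s,h}$ is the finite difference in the $x_s$ direction and $\eta$ is a cut-off. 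Using the monotonicity \eqref{A2} from the left-hand side, the Lipschitz-in-$x$ control \eqref{A4} of $D_\xi f$, and the bound on $\omega$, this produces the basic Caccioppoli-type inequality
\begin{align*}
\sum_{i=1}^n \int \eta^2 \bigl(|u_{x_i}|^2+\varepsilon\bigr)^{\frac{p_i-2}{2}} |\tau_{s,h}u_{x_i}|^2 \, dx
\;\lesssim\; \sum_{i=1}^n \int |D\eta|^2 |\tau_{s,h}u|^2 \bigl(\cdots\bigr)\, dx + (\text{terms in } g, \omega_{x_i}, \omega).
\end{align*}

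The second step is to convert this into genuine integrability gains. Dividing by $|h|^2$, letting $h\to 0$, and summing over $s$ gives that $V_{p_i}(u_{x_i})\in W^{1,2}_{\mathrm{loc}}$ once the right-hand side is shown to be finite; but making the right-hand side finite is exactly where the anisotropy bites, because the ``error'' terms involve $|u_{x_i}|^{p_i}$ or $|u_{x_i}|^{p_i-1}g$ and we only have $u\in W^{1,\mathbf p}_{\mathrm{loc}}$ to start. Here I would exploit the local boundedness of $u$: combining the Caccioppoli inequality with the anisotropic Sobolev embedding $W^{1,\mathbf p}\hookrightarrow L^{\overline p^*}$ and interpolation between $L^\infty$ and the energy space, one bootstraps from $u_{x_i}\in L^{p_i}$ to $u_{x_i}\in L^{p_i+2}$, which is precisely the quantity appearing in the claimed estimate. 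The role of the hypotheses is transparent in this scheme: $p_n < p_1+2$ keeps the worst gradient component controllable by the best second-derivative term; the condition $p_n < \overline p^*/t'$ (when $\overline p < n$) and the definition of $t$ in \eqref{definizionet} are exactly what make the $\omega$- and $g$-terms integrable against the gained summability $\overline p^*$ via Hölder; and $r > p_n+2$ lets $g\,|u_{x_i}|^{p_i-1}$ be estimated by $\|g\|_{L^r}$ times $\|u_{x_i}\|_{L^{p_i+2}}^{\,p_i-1}$ with a Young-inequality absorption of the leading power.

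The third step is the iteration/absorption: the inequality obtained has the schematic form $Y \le c(1 + Y^{\theta} + (\text{data}))$ with $\theta<1$ (thanks to $p_n<p_1+2$ and $r>p_n+2$), for $Y$ a sum of the two quantities $\int_{B_{R/4}}|u_{x_i}|^{p_i+2}$ and $\int_{B_{R/4}}|u_{x_i x_j}|^2|u_{x_i}|^{p_i-2}$; Young's inequality then yields the stated bound with an explicit $\sigma=\sigma(n,p_i,r)$, and a standard iteration lemma on the radii handles the passage from a fixed $\eta$ to the concentric balls $B_{R/4}\subset B_R$. Finally I would remove the regularization: the uniform-in-$\varepsilon$ bounds give weak convergence of $V_{p_i}((u_\varepsilon)_{x_i})$ in $W^{1,2}_{\mathrm{loc}}$ and strong convergence of $(u_\varepsilon)_{x_i}$ in $L^{p_i}_{\mathrm{loc}}$, identifying the limit with $u$ and transferring both estimates.

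The main obstacle I expect is controlling the cross terms in the Caccioppoli inequality in which a difference quotient of $u_{x_i}$ is multiplied by a high power $|u_{x_j}|$ of a \emph{different} gradient component (the genuinely orthotropic, non-diagonal interaction): unlike in the diagonal $p$-Laplacian case, \eqref{A2}--\eqref{A3} do not give termwise coercivity for these, so one must split $|u_{x_j}|^{p_j}$ using $L^\infty$-interpolation and the anisotropic embedding so that every factor lands in a space the Caccioppoli inequality already controls, and this is where the sharp form of \eqref{Ipotesip} and the precise definition of $t$ are essential. Getting the bookkeeping of exponents to close — ensuring the exponent on $Y$ stays below $1$ after all Hölder/Young steps — is the delicate part of the argument.
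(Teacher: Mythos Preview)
Your overall architecture---regularize, difference quotients on the Euler--Lagrange system, a priori estimate exploiting local boundedness, pass to the limit---is exactly the paper's. Two technical points, however, are mischaracterized and would cause trouble if you tried to execute the proof as written.

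First, the passage from the Caccioppoli information to $u_{x_i}\in L^{p_i+2}$ is \emph{not} done via the anisotropic Sobolev embedding $W^{1,\mathbf p}\hookrightarrow L^{\overline p^*}$. The paper uses Proposition~\ref{LemmaBrasco} (the Bousquet--Brasco inequality): for $u\in L^\infty_{\mathrm{loc}}$, the diagonal quantity $\int |u_{x_kx_k}|^2|u_{x_k}|^{p_k-2}$ controls $\int |u_{x_k}|^{p_k+2}$ on a slightly smaller ball, with a factor $\|u\|_\infty^2$. This is the precise ``interpolation with $L^\infty$'' you allude to, but it is a specific identity-based estimate, not an embedding. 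Without it the loop does not close: the error terms in the Caccioppoli inequality (notably the one from $|I_2|$ in Case~2, where $\int|\tau_{j,h}u|^{p_i}$ with $p_i>p_j$ forces $\int|u_{x_j}|^{p_j+2}$ to appear) are genuinely of the form $\beta\int_{B_{t'}}|u_{x_j}|^{p_j+2}$, i.e.\ \emph{linear} in the higher-integrability quantity, not sublinear. Thus your schematic $Y\le c(1+Y^\theta)$ with $\theta<1$ is not what one obtains; rather one gets $\phi(\rho)\le \theta\,\phi(\sigma)+\text{data}$ with $\phi(\rho)=\sum_i\int_{B_\rho}|u_{x_ix_i}|^2|u_{x_i}|^{p_i-2}$ and $\theta=\beta\,c\,\|u\|_\infty$, after substituting Proposition~\ref{LemmaBrasco} for the $L^{p_j+2}$ terms on the larger ball. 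Choosing $\beta$ small makes $\theta<1$, and then Lemma~\ref{lm3} (radii iteration) closes the estimate. Your mention of ``a standard iteration lemma on the radii'' is the right instinct, but it replaces---not supplements---the $Y^\theta$ absorption.

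Two smaller points. The hypothesis $p_n<\overline p^*/t'$ is used only to invoke Theorem~\ref{ulimitatoCupini} and secure $u\in L^\infty_{\mathrm{loc}}$; it plays no role inside the Caccioppoli estimate, where the $\omega$-term is handled directly by $\omega\in W^{1,\frac{\mathbf p+2}{\mathbf p+1}}$ and Lemma~\ref{ldiff}. And the ``main obstacle'' you flag is slightly off: because the integrand is orthotropic, \eqref{A2}--\eqref{A3} are already diagonal in $i$, so there is no coercivity defect. The genuine cross interaction is that the cut-off term produces $\int|\tau_{j,h}u|^{p_i}$ (difference in direction $j$, exponent $p_i$), which for $i>j$ requires $u_{x_j}\in L^{p_i}\subset L^{p_j+2}$---this is precisely where $p_n<p_1+2$ enters, and where Proposition~\ref{LemmaBrasco} becomes indispensable.
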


\begin{rmk}
We observe that condition \eqref{Ipotesip} in Theorem \ref{thmBfinito} ensures that $u \in L^\infty_{\mathrm{loc}}(\Omega)$. Indeed, by Lemma \ref{LemmaTroisi}, the assumption $\omega \in  W_{\mathrm{loc}}^{1,\frac{\mathbf{p}+2}{\mathbf{p}+1}}(\Omega)$ implies $\omega \in L^t_{\mathrm{loc}}(\Omega)$, where $t$ was defined at \eqref{D1}. Thus, assuming that the exponents $t$ and $p_i$ satisfy the relation $p_n < \frac{\overline{p}^*}{t'}$, we obtain that local minimizers of \eqref{functional} are locally bounded in $\Omega$ (see Theorem \ref{ulimitatoCupini} below).
If $\omega \in L^\infty_{\mathrm{loc}}(\Omega)$, then this relation becomes $p_n  < \overline{p}^*$, which is the usual one when dealing with bounded minimizers (see \cite{CMMjota}). 
If $\overline{p} \ge n$, this condition disappears, since according to \eqref{definizionePi} we can choose $\overline{p}^*$ large enough so that $p_1+2 < \frac{\overline{p}^*}{t'}$. \\
Moreover, since $p_i \ge 2$, for every $i=1,...,n$, it holds $\overline{q} < \overline{p}$. Therefore, 
 if $\overline{p}<n$,  the exponent $t$ appearing in \eqref{Ipotesip} is equal to $\frac{n \overline{q}}{n-\overline{q}}$.\\
 Eventually, we note that the exponents $\overline{p}^*$ and $t$ depend on $p_i$, hence the condition $p_n < \frac{\overline{p}^*}{t'}$ is necessary, since is not satisfied for any choice of exponents $p_i$.
\end{rmk}

Now, we briefly describe the strategy of the proof, which, as usual, will consist of an approximation argument, a uniform a priori estimate, and a passage to the limit. It is well known that the a priori estimate needs to be established for sufficiently regular minimizers. Therefore, the first step is to establish higher differentiability for minimizers of functionals with Lipschitz continuous coefficients, since this result is not available in the literature. As usual the higher differentiability is achieved with the use of the well known difference quotient methods that is frequently used in this context.\\
More specifically, for a given $\varepsilon \geq 0$, we investigate the function 
\[
f_\varepsilon(x,u, \xi) = \sum_{i=1}^{n} \dfrac{b_i(x)}{p_i} |\xi_i|^{p_i} + \varepsilon \left( 1 + |\xi|^{\frac{p_n}{2}} \right)^2-\omega(x)u
\]
where the coefficients $b_i(x): \Omega \to \mathbb{R}$ are non-negative and satisfy the following condition
\[
L_i = \sup_{x, y \in \Omega, x \neq y} \frac{|b_i(x) - b_i(y)|}{|x - y|} < \infty.
\]
The first goal is to analyze the regularity properties of minimizers of the following functional
\[
\mathcal{F}_\varepsilon(u, \Omega) = \int_\Omega f_\varepsilon(x,u, Du) \, dx.
\]
After, we aim to establish an uniform a priori estimate, which plays a crucial role in the proof of our main result. Finally, Theorem \ref{thmBfinito} follows by proving that the a priori estimate is preserved in passing to the limit.\\

The plan of the paper is briefly described. After summarizing some known results and fixing a few notations, we prove a lemma for approximating functionals to be used in the a priori estimates in Section \ref{PreREG}. Section \ref{Apriorisec} is devoted to the proof of some a priori estimates for solutions to a family of approximating problems. Next, in Section \ref{Appro}, we pass to the limit in the approximating problems.
\section{Preliminaries}
In this section we introduce some notations and collect several results that we shall use to establish our main result.
We   denote by $c$ or $C$ a general constant that may vary on different occasions, even within the same line of estimates. 

In what follows, $B(x,r)=B_{r}(x)= \{ y \in \mathbb{R}^{n} : |y-x | < r  \}$ will denote the ball centered at $x$ of radius $r$. We will omit the indication of the center $x$ when no confusion arises.

Now, we recall a well-known iteration lemma (see \cite[Lemma $6.1$]{giusti}).
\begin{lem}\label{lm3} 
Let $Z(t)  :  [\rho,R] \rightarrow \mathbb{R}$ be a bounded nonnegative function, where $R>0$. Assume that for $\rho \leq t  < s \leq R$ it holds
$$Z(t) \leq \theta Z(s) +A(s-t)^{- \alpha} + B(s-t)^{- \beta}+ C$$
where $\theta \in (0,1)$, $A$, $B$, $C \geq 0, \alpha> \beta >0$  are constants. Then there exists a constant $c=c(\alpha, \theta, \beta)$ such that
$$Z(\rho) \leq c \biggl( A(R-\rho )^{- \alpha} + B(R-\rho )^{- \beta}+ C\biggr).$$
\end{lem}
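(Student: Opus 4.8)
The plan is to use a standard hole-filling iteration on a geometric sequence of radii. First I would fix a parameter $\tau \in (0,1)$, to be chosen at the end, and set $t_0 := \rho$ and $t_{i+1} := t_i + (1-\tau)\tau^{i}(R-\rho)$ for $i \ge 0$. Since $\sum_{i=0}^{\infty}(1-\tau)\tau^{i} = 1$, the sequence $(t_i)$ is strictly increasing and converges to $R$; in particular $\rho \le t_i < t_{i+1} \le R$ for every $i$, so the hypothesis applies with $t = t_i$, $s = t_{i+1}$, where $s - t = (1-\tau)\tau^{i}(R-\rho)$. This gives
\[
Z(t_i) \le \theta\, Z(t_{i+1}) + \frac{A}{(1-\tau)^{\alpha}(R-\rho)^{\alpha}}\,\tau^{-\alpha i} + \frac{B}{(1-\tau)^{\beta}(R-\rho)^{\beta}}\,\tau^{-\beta i} + C .
\]

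Next I would iterate this inequality $k$ times, which yields
\[
Z(\rho) \le \theta^{k} Z(t_k) + \frac{A}{(1-\tau)^{\alpha}(R-\rho)^{\alpha}}\sum_{i=0}^{k-1}\big(\theta\tau^{-\alpha}\big)^{i} + \frac{B}{(1-\tau)^{\beta}(R-\rho)^{\beta}}\sum_{i=0}^{k-1}\big(\theta\tau^{-\beta}\big)^{i} + C\sum_{i=0}^{k-1}\theta^{i}.
\]
The only place where some care is needed is the choice of $\tau$: since $\theta < 1$, by continuity we may pick $\tau \in (0,1)$ close enough to $1$ so that $\theta\,\tau^{-\alpha} < 1$; because $\alpha > \beta > 0$ and $\tau < 1$ it follows automatically that $\theta\,\tau^{-\beta} < \theta\,\tau^{-\alpha} < 1$, and of course $\theta < 1$, so all three geometric series converge. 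Bounding the partial sums by the corresponding full series we obtain, for every $k$,
\[
Z(\rho) \le \theta^{k} Z(t_k) + \frac{A}{(1-\tau)^{\alpha}\,(1-\theta\tau^{-\alpha})}\,(R-\rho)^{-\alpha} + \frac{B}{(1-\tau)^{\beta}\,(1-\theta\tau^{-\beta})}\,(R-\rho)^{-\beta} + \frac{C}{1-\theta}.
\]

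Finally I would let $k \to \infty$: since $Z$ is bounded on $[\rho,R]$ and $0<\theta<1$, the term $\theta^{k}Z(t_k)$ tends to $0$, and the assertion follows with
\[
c := \max\left\{ \frac{(1-\tau)^{-\alpha}}{1-\theta\tau^{-\alpha}},\ \frac{(1-\tau)^{-\beta}}{1-\theta\tau^{-\beta}},\ \frac{1}{1-\theta} \right\},
\]
a constant depending only on $\alpha$, $\beta$ and $\theta$ (through the fixed value of $\tau$). The main — and essentially the only — obstacle is the calibration of $\tau$ guaranteeing summability of the geometric series; once $\theta\tau^{-\alpha}<1$ is secured, the hypothesis $\alpha>\beta$ makes the remaining series converge for free, and everything else is routine bookkeeping.
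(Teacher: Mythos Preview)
Your argument is correct and is precisely the standard hole-filling iteration used to prove this lemma. The paper itself does not give a proof but simply cites \cite[Lemma~6.1]{giusti}, where the argument is exactly the one you wrote: a geometric sequence of radii, the choice of $\tau\in(\theta^{1/\alpha},1)$ to make $\theta\tau^{-\alpha}<1$, and the observation that boundedness of $Z$ kills the term $\theta^{k}Z(t_k)$ in the limit.
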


At this point, we recall a result that will become significant later (for the proof see  \cite[Proposition $4.3$]{LemmaBrasco}).

\begin{prop}\label{LemmaBrasco}
 Let $u \in W^{1,\mathbf{p}}_{\mathrm{loc}}(\Omega)$ and assume that there exists   $k \in \{1,2,\dots,n \}$ such that
    $$|u_{x_k x_k}|^2|u_{x_k}|^{p_k-2} \in L^{1}_{\mathrm{loc}}(\Omega).$$
   If $u \in L^{\infty}_{\mathrm{loc}}(\Omega),$  then $u_{x_k} \in L_{\mathrm{\mathrm{loc}}}^{p_k+2}(\Omega)$  and there exist constants $C=C(n, p_k )>0$ and $\gamma= \gamma(p_k)$ such that for every pair of concentric balls $B_{\rho} \subset B_{R}\Subset  \Omega$,  it holds the following
    \begin{equation}
        \int_{B_\rho} |u_{x_k}|^{p_k+2}  dx \leq C ||u||^2_{\infty} \left(  \int_{B_R} |u_{x_k x_k}|^{2}|u_{x_k}|^{p_k-2}  dx+ \left(\frac{1}{R- \rho} \right)^\gamma \int_{B_R} |u_{x_k}|^{p_k} dx\right).
    \end{equation}
\end{prop}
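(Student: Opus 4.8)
The plan is to obtain the estimate by integrating by parts in the $x_k$-direction, trading one derivative of $u$ for the factor $\|u\|_\infty$. First I would record that the assumption $|u_{x_k x_k}|^2|u_{x_k}|^{p_k-2}\in L^1_{\mathrm{loc}}(\Omega)$ means, up to the constant $(p_k/2)^2$, that $V_{p_k}(u_{x_k})=|u_{x_k}|^{(p_k-2)/2}u_{x_k}$ is weakly differentiable in $x_k$ with $\partial_{x_k}V_{p_k}(u_{x_k})\in L^2_{\mathrm{loc}}$; in particular the notation $u_{x_k x_k}$ is meaningful as a weak $x_k$-derivative, $u_{x_k x_k}\in L^1_{\mathrm{loc}}(\Omega)$, and $u_{x_k x_k}=0$ a.e.\ on $\{u_{x_k}=0\}$.

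Since $u_{x_k}\in L^{p_k+2}_{\mathrm{loc}}$ is not available a priori, I would first prove an estimate uniform in a truncation parameter. For $M>0$ set $T_M(t)=\min\{|t|,M\}$ and $\psi_M(t):=T_M(t)^{p_k}\,t$, so that $\psi_M$ is locally Lipschitz, $|\psi_M(t)|=T_M(t)^{p_k}|t|$ and $0\le\psi_M'(t)\le(p_k+1)\,T_M(t)^{p_k}$ a.e. I would fix a cutoff $\eta\in C_0^\infty(B_R)$ with $0\le\eta\le1$, $\eta\equiv1$ on $B_\rho$, $|\nabla\eta|\le c(R-\rho)^{-1}$, and use $\eta^{p_k+2}\psi_M(u_{x_k})$ as a multiplier: since $\eta$ has compact support, $\psi_M(u_{x_k})\in W^{1,1}_{\mathrm{loc}}$ in $x_k$, and $u\in L^\infty_{\mathrm{loc}}$, integration by parts in $x_k$ is legitimate and yields
\begin{equation*}
J_M:=\int\eta^{p_k+2}\,T_M(u_{x_k})^{p_k}|u_{x_k}|^2\,dx=-\int\Big[(p_k+2)\,\eta^{p_k+1}\eta_{x_k}\,\psi_M(u_{x_k})+\eta^{p_k+2}\,\psi_M'(u_{x_k})\,u_{x_k x_k}\Big]u\,dx .
\end{equation*}
The left-hand side is finite, $J_M\le M^{p_k}\int_{B_R}|u_{x_k}|^2\,dx<\infty$, because $p_k\ge2$.

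On the right-hand side I would bound $|u|\le\|u\|_\infty$ and use Young's inequality in each term to peel off a small multiple of $J_M$. The elementary pointwise bounds $T_M(t)^{p_k}\le T_M(t)^{p_k/2}|t|^{p_k/2}$ and $|u_{x_k}|^{p_k/2}|u_{x_k x_k}|=|u_{x_k}|\cdot|u_{x_k}|^{(p_k-2)/2}|u_{x_k x_k}|$ give, for every $\delta>0$,
\begin{equation*}
\psi_M'(u_{x_k})\,|u_{x_k x_k}|\le(p_k+1)\Big(\delta\,T_M(u_{x_k})^{p_k}|u_{x_k}|^2+\tfrac{1}{4\delta}\,|u_{x_k}|^{p_k-2}|u_{x_k x_k}|^2\Big),
\end{equation*}
and a parallel splitting of the cutoff term produces $\delta\,T_M(u_{x_k})^{p_k}|u_{x_k}|^2$ together with $\tfrac{1}{4\delta}\,|\eta_{x_k}|^2|u_{x_k}|^{p_k}$ (after using $\eta\le1$ and $T_M(u_{x_k})\le|u_{x_k}|$). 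Choosing $\delta$ of order $\|u\|_\infty^{-1}$ makes the prefactors $(p_k+1)\|u\|_\infty\delta$ and $(p_k+2)\|u\|_\infty\delta$ equal to $\tfrac14$, while the conjugate constants $\tfrac1{4\delta}$ are of order $\|u\|_\infty$; multiplied by the $\|u\|_\infty$ already present from $|u|\le\|u\|_\infty$, this is precisely what generates the factor $\|u\|_\infty^2$. Absorbing $\tfrac12J_M$ into the left-hand side (allowed because $J_M<\infty$) and using $|\eta_{x_k}|\le c(R-\rho)^{-1}$ on $B_R$, I would arrive at
\begin{equation*}
J_M\le C(n,p_k)\,\|u\|_\infty^2\Big(\int_{B_R}|u_{x_k x_k}|^2|u_{x_k}|^{p_k-2}\,dx+(R-\rho)^{-2}\int_{B_R}|u_{x_k}|^{p_k}\,dx\Big),
\end{equation*}
with $C$ independent of $M$. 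Since $J_M\ge\int_{B_\rho\cap\{|u_{x_k}|\le M\}}|u_{x_k}|^{p_k+2}\,dx$, letting $M\to\infty$ and using monotone convergence delivers both $u_{x_k}\in L^{p_k+2}(B_\rho)$ and the asserted inequality, with $\gamma=2$ (any $\gamma=\gamma(p_k)$ being admissible).

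The main obstacle is the apparent circularity: one cannot integrate by parts in, and then absorb, the possibly infinite quantity $\int\eta^{p_k+2}|u_{x_k}|^{p_k+2}\,dx$ directly. The truncation $\psi_M$ circumvents this, $J_M$ being finite for each fixed $M$ while the final bound is $M$-free; the point that requires care is precisely that every Young splitting reproduces the \emph{truncated} weight $T_M(u_{x_k})^{p_k}|u_{x_k}|^2$ rather than $|u_{x_k}|^{p_k+2}$, which is guaranteed by the two inequalities $T_M^{p_k}\le T_M^{p_k/2}|u_{x_k}|^{p_k/2}$ and $|\psi_M(t)|=T_M(t)^{p_k}|t|$ noted above. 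One could alternatively feed the absorption through the iteration Lemma~\ref{lm3}, but here direct absorption is enough.
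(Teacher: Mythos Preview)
The paper does not prove this proposition; it is quoted from \cite[Proposition~4.3]{LemmaBrasco}. Your strategy---integrate by parts in the $x_k$-direction against the truncated multiplier $\eta^{p_k+2}\psi_M(u_{x_k})$, absorb via Young's inequality, then send $M\to\infty$---is essentially the argument used in that reference, and the exponent-matching in your two Young splittings is correct (you in fact recover the sharp value $\gamma=2$).

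One step deserves more care: the assertion that $u_{x_kx_k}\in L^1_{\mathrm{loc}}$ as a genuine weak derivative. For $p_k>2$ the map $s\mapsto|s|^{2/p_k-1}s$ taking $V_{p_k}(u_{x_k})$ back to $u_{x_k}$ is only H\"older at the origin, so the chain rule does not apply directly and $u_{x_kx_k}$ need not exist as a locally integrable function. The clean fix, which is how the cited proof proceeds, is to observe that your truncation can be written as $\psi_M(u_{x_k})=H_M\bigl(V_{p_k}(u_{x_k})\bigr)$ with $H_M(s)=|s|^{1+2/p_k}s$ for $|s|\le M^{p_k/2}$ and $H_M(s)=M^{p_k}|s|^{2/p_k-1}s$ otherwise; both branches have derivative bounded by $C(p_k)M^{1+p_k/2}$, so $H_M$ is globally Lipschitz and the chain rule applies to the $W^{1,2}_{\mathrm{loc}}$ object $V_{p_k}(u_{x_k})$. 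Interpreting $u_{x_kx_k}$ as $(2/p_k)|u_{x_k}|^{(2-p_k)/2}\partial_{x_k}V_{p_k}(u_{x_k})$ on $\{u_{x_k}\neq0\}$ and $0$ elsewhere, the integration by parts and your subsequent estimates go through verbatim.
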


 We shall use the following local boundedness result for minimizers of the anisotropic functional
defined at \eqref{functional}, whose proof can be found in \cite[Corollary 2.4]{uLimitato}.
 \begin{thm}\label{ulimitatoCupini}
    Let $f = f(x, \xi)$ be the integrand defined at \eqref{integrand} and assume that $\omega\in L^t(\Omega)$, with $\left(\frac{\overline{p}^*}{p_n} \right)' < t \le \infty$,
where $\bar{p}^*$ is the Sobolev exponent of $\bar{p}$, with $\bar{p}$ defined at \eqref{definizionePi}. 
Then, every local minimizer $u$ of \eqref{functional} is locally bounded in $\Omega$.
\end{thm}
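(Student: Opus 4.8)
\textbf{Proof strategy for Theorem \ref{ulimitatoCupini}.}

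The plan is to run a De Giorgi--Stampacchia level-set iteration adapted to the anisotropic setting, exploiting the minimality of $u$ against truncated competitors and the anisotropic Sobolev embedding. Fix a ball $B_{R_0}\Subset\Omega$ and, for $k>0$ and radii $\rho<s\le R_0$ inside $B_{R_0}$, set $A_k(s)=\{x\in B_s: u(x)>k\}$, and let $\eta\in C_0^\infty(B_s)$ be a cutoff with $\eta\equiv1$ on $B_\rho$, $|D\eta|\le c/(s-\rho)$. First I would test minimality with the competitor $\varphi=u-\eta(u-k)_+$, i.e.\ compare $\mathcal{F}(u;B_s)$ with $\mathcal{F}(\varphi;B_s)$. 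Expanding $f$ via \eqref{integrand}, using $0<\lambda\le a_i\le\Lambda$, and the elementary inequality for the $p_i$-power of a convex combination, one obtains a Caccioppoli-type inequality: for each $i$,
\begin{equation*}
\lambda\int_{A_k(\rho)}|u_{x_i}|^{p_i}\,dx \le c\sum_{i=1}^n\frac{\Lambda}{(s-\rho)^{p_i}}\int_{A_k(s)}(u-k)_+^{p_i}\,dx + c\int_{A_k(s)}|\omega|\,(u-k)_+\,dx,
\end{equation*}
where the last term comes from the linear forcing term $\int\omega u$ (the difference $\int_{A_k(s)}\omega\,\eta(u-k)_+\,dx$ is bounded by H\"older using $\omega\in L^t$).

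The second step is to convert this into a decay estimate for the measure-weighted $L^{\overline{p}^*}$ norm of $(u-k)_+$. Applying the anisotropic Sobolev inequality (in the Troisi/De Giorgi form) to $w=\eta(u-k)_+$ gives control of $\|w\|_{L^{\overline{p}^*}(B_s)}$ (or of $\|w\|_{L^q}$ for any $q<\infty$ when $\overline p\ge n$) by $\prod_i\|w_{x_i}\|_{L^{p_i}}^{1/n}$, hence by the right-hand side above. The forcing term is handled by H\"older with exponents $t$, $\overline{p}^*$, and a third exponent on $|A_k(s)|$: precisely
\begin{equation*}
\int_{A_k(s)}|\omega|(u-k)_+\,dx \le \|\omega\|_{L^t(B_{R_0})}\,\|(u-k)_+\|_{L^{\overline{p}^*}(A_k(s))}\,|A_k(s)|^{1-\frac1t-\frac1{\overline{p}^*}},
\end{equation*}
and here the hypothesis $t>\left(\frac{\overline{p}^*}{p_n}\right)'$ is exactly what makes the exponent $1-\frac1t-\frac1{\overline{p}^*}$ strictly positive after the iteration bookkeeping (so that powers of $|A_k(s)|$ carry a genuine gain), while also the competitor's $L^{p_i}$ terms are reabsorbed into $\|(u-k)_+\|_{L^{\overline{p}^*}}$ via H\"older since $p_i\le p_n$ and, on the set where $u>k$, one interpolates $L^{p_i}\hookrightarrow$ the $L^{\overline{p}^*}$-norm times a power of the measure. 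Young's inequality then absorbs the $\|(u-k)_+\|_{L^{\overline{p}^*}}$ factor appearing linearly on the right.

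The third step is the standard iteration. Setting $U(k,s)=\int_{A_k(s)}(u-k)_+^{\overline{p}^*}\,dx$ or rather working with the coupled quantity $\varphi(k,\rho)=\left(\int_{A_k(\rho)}(u-k)_+^{\overline{p}^*}\right)^{1/\overline{p}^*}\!\!\cdot\,\text{(measure factors)}$, the combined Caccioppoli--Sobolev estimate yields an inequality of the form
\begin{equation*}
\varphi(h,\rho)\le \frac{C}{(s-\rho)^\gamma}\,\frac{\varphi(k,s)^{1+\delta}}{(h-k)^{\mu}}\qquad\text{for }h>k,\ \rho<s,
\end{equation*}
with $\delta>0$ owing precisely to the strict inequality in the hypothesis on $t$. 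An application of the classical lemma of De Giorgi (e.g.\ Giusti, Lemma 7.1, on fast-decaying sequences, together with Lemma \ref{lm3} to handle the radii) then forces $\varphi(k_\infty,R_0/2)=0$ for some finite level $k_\infty=k_\infty(n,p_i,\lambda,\Lambda,R_0,\|u\|_{L^{\overline p}(B_{R_0})},\|\omega\|_{L^t(B_{R_0})})$, i.e.\ $u\le k_\infty$ a.e.\ on $B_{R_0/2}$. Replacing $u$ by $-u$ (the functional \eqref{functional} is not symmetric, but $-u$ is a minimizer of the functional with $\omega$ replaced by $-\omega$, which satisfies the same structural hypotheses) gives the bound from below, hence $u\in L^\infty_{\mathrm{loc}}(\Omega)$.

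The main obstacle is the bookkeeping in the second step: one must choose the H\"older exponents for the forcing term and for the reabsorption of the competitor's $L^{p_i}$-terms so that \emph{all} powers of $|A_k(s)|$ that appear combine into a single strictly positive power, and simultaneously the $L^{\overline{p}^*}$-norm of $(u-k)_+$ is recovered with a superlinear exponent $1+\delta$; this is where the precise threshold $t>(\overline p^*/p_n)'$ and the anisotropy (all $p_i$ up to $p_n$) interact, and getting the inequality in the exact shape required by the De Giorgi iteration lemma is the delicate point. When $\overline p\ge n$ a minor variant is needed: $\overline{p}^*$ is replaced throughout by a sufficiently large but finite exponent, chosen once and for all so that the same positivity of exponents holds, which is possible since then $t$ is merely required to be finite and larger than a finite threshold.
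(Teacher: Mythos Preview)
The paper does not supply its own proof of Theorem~\ref{ulimitatoCupini}; it quotes the result from Cupini--Marcellini--Mascolo \cite[Corollary~2.4]{uLimitato}. Your proposed strategy---Caccioppoli inequality from the truncated competitor $u-\eta(u-k)_+$, anisotropic Sobolev embedding (Lemma~\ref{anisopoincare}/Lemma~\ref{LemmaTroisi}), H\"older on the forcing term, and De~Giorgi level-set iteration---is precisely the method used in that reference, and the role you assign to the threshold $t>(\overline{p}^{*}/p_n)'$ (ensuring a strictly positive power of $|A_k|$ in the iteration, the worst case coming from the $p_n$-term) is correct.
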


\subsection{Algebraic inequalities}
For the function $V_{p}(\xi)$, defined at \eqref{DefVp}, we recall the following estimate (see e.g. \cite[Lemma 8.3]{giusti}). 
\begin{lem}\label{D1}
Let $1<p<+\infty$. There exist positive constants $c_1$ and $c_2$, depending only on $n$ and $p$, such that
\begin{center}
{$c_1(|\xi|^{2}+|\eta|^{2})^{\frac{p-2}{2}} \leq \dfrac{|V_{p}(\xi)-V_{p}(\eta)|^{2}}{|\xi-\eta|^{2}} \leq c_2(|\xi|^{2}+|\eta|^{2})^{\frac{p-2}{2}} $},
\end{center}
for any $\xi, \eta \in \mathbb{R}^{n}$, with $\xi \neq \eta$.
\end{lem}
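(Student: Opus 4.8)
\medskip
\noindent\textbf{Proposed proof of Lemma \ref{D1}.} The idea is to represent $V_{p}(\xi)-V_{p}(\eta)$ as an integral of $DV_{p}$ along the segment $z(t):=(1-t)\eta+t\xi$, $t\in[0,1]$, and to exploit the explicit form of $DV_{p}$. For $z\neq 0$ one differentiates \eqref{DefVp} to get $DV_{p}(z)=|z|^{\frac{p-2}{2}}M(z)$, where $M(z)=\mathrm{Id}+\tfrac{p-2}{2}\,z\otimes z/|z|^{2}$. Since $z\otimes z/|z|^{2}$ is the orthogonal projection onto $\mathrm{span}(z)$, the symmetric matrix $M(z)$ has eigenvalues $1$ (on $z^{\perp}$) and $\tfrac{p}{2}$ (on $\mathrm{span}(z)$), both of which, for $1<p<\infty$, lie in $[\min(1,\tfrac{p}{2}),\max(1,\tfrac{p}{2})]\subset(0,\infty)$. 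Hence, for all $w\in\mathbb{R}^{n}$,
$$\min\Big(1,\tfrac{p}{2}\Big)|z|^{\frac{p-2}{2}}|w|^{2}\le\langle DV_{p}(z)w,w\rangle\le\max\Big(1,\tfrac{p}{2}\Big)|z|^{\frac{p-2}{2}}|w|^{2},\qquad |DV_{p}(z)w|\le\max\Big(1,\tfrac{p}{2}\Big)|z|^{\frac{p-2}{2}}|w| .$$
Moreover all three quantities in the asserted inequality are positively homogeneous (of degrees $p$, $2$, $p-2$) under $(\xi,\eta)\mapsto(s\xi,s\eta)$, so any constant we obtain is automatically scale invariant. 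The map $t\mapsto V_{p}(z(t))$ is absolutely continuous on $[0,1]$ — when $0$ lies on the segment, i.e.\ $\xi,\eta$ collinear with opposite orientation, so that $V_{p}(\eta)=-\lambda^{p/2}V_{p}(\xi)$ for some $\lambda\ge0$, this degenerate case is dealt with by a direct computation; otherwise $V_{p}$ is $C^{1}$ near the segment and the formula is classical — whence
$$V_{p}(\xi)-V_{p}(\eta)=\int_{0}^{1}DV_{p}(z(t))(\xi-\eta)\,dt .$$

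Combining the displayed formula with the matrix bounds gives $|V_{p}(\xi)-V_{p}(\eta)|\le\max(1,\tfrac{p}{2})\,|\xi-\eta|\,J$, where $J:=\int_{0}^{1}|z(t)|^{\frac{p-2}{2}}\,dt$, while Cauchy--Schwarz together with the monotonicity of $V_{p}$ (note $V_{p}=\nabla\Phi$ with $\Phi(\xi)=\tfrac{2}{p+2}|\xi|^{\frac{p+2}{2}}$ convex) yields
$$|V_{p}(\xi)-V_{p}(\eta)|\,|\xi-\eta|\ge\langle V_{p}(\xi)-V_{p}(\eta),\xi-\eta\rangle=\int_{0}^{1}\langle DV_{p}(z(t))(\xi-\eta),\xi-\eta\rangle\,dt\ge\min\Big(1,\tfrac{p}{2}\Big)|\xi-\eta|^{2}\,J .$$
Therefore $\min(1,\tfrac{p}{2})\,|\xi-\eta|\,J\le|V_{p}(\xi)-V_{p}(\eta)|\le\max(1,\tfrac{p}{2})\,|\xi-\eta|\,J$, and the whole lemma reduces to the two-sided estimate $c(p)\,(|\xi|^{2}+|\eta|^{2})^{\frac{p-2}{4}}\le J\le C(p)\,(|\xi|^{2}+|\eta|^{2})^{\frac{p-2}{4}}$; since $m:=\max(|\xi|,|\eta|)$ satisfies $m^{2}\le|\xi|^{2}+|\eta|^{2}\le 2m^{2}$, it is equivalent to bound $J$ by a $p$-dependent multiple of $m^{\frac{p-2}{2}}$ from above and below.

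To estimate $J$ I would split into $p\ge2$ and $1<p<2$. If $p\ge2$: convexity of $t\mapsto|z(t)|$ gives $|z(t)|\le(1-t)|\eta|+t|\xi|\le m$, hence $J\le m^{\frac{p-2}{2}}$; and since $t\mapsto|z(t)|$ is $|\xi-\eta|$-Lipschitz with $|\xi-\eta|\le 2m$ and equals $m$ at one endpoint, it stays above $m/2$ on a subinterval of $[0,1]$ of length at least $\tfrac14$, so $J\ge\tfrac14(m/2)^{\frac{p-2}{2}}$. If $1<p<2$: now $\tfrac{p-2}{2}\in(-\tfrac12,0)$, so the lower bound is immediate, $|z(t)|\le m$ forces $J\ge m^{\frac{p-2}{2}}$, whereas for the upper bound one uses that $|z|^{\frac{p-2}{2}}$ is integrable along a line: in arclength, writing $s_{0}$ for the parameter of the point of the line nearest the origin, $|z|^{\frac{p-2}{2}}\le|s-s_{0}|^{\frac{p-2}{2}}$ and $\int_{-|\xi-\eta|}^{|\xi-\eta|}|s|^{\frac{p-2}{2}}\,ds=\tfrac{4}{p}|\xi-\eta|^{\frac{p}{2}}<\infty$, giving $J\le\tfrac{4}{p}|\xi-\eta|^{\frac{p-2}{2}}$; combined with the annulus estimate $J\le(m/2)^{\frac{p-2}{2}}$ valid when $|\xi-\eta|<m/2$ (the segment then staying in $\{m/2<|z|<3m/2\}$), this covers every configuration. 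Collecting the estimates and squaring yields Lemma \ref{D1} with $c_{1},c_{2}$ depending only on $p$, hence only on $n$ and $p$.

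\medskip
\noindent The eigenvalue computation for $M(z)$ and the elementary bounds on $J$ are routine; \textbf{the only delicate point is the sub-quadratic range $1<p<2$}, where $V_{p}$ is merely Hölder continuous near the origin and the segment $[\eta,\xi]$ can pass arbitrarily close to $0$. This is exactly why the upper bound on $J$ needs the integrability of $s\mapsto|s|^{(p-2)/2}$ (equivalently, a split according to whether $|\xi-\eta|$ is comparable to $\max(|\xi|,|\eta|)$), whereas in that same range the lower bound is the trivial one.
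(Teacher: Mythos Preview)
Your argument is correct and is precisely the standard route to this inequality: write $V_{p}(\xi)-V_{p}(\eta)$ as the line integral of $DV_{p}$, use the explicit spectrum of $DV_{p}(z)=|z|^{(p-2)/2}\big(\mathrm{Id}+\tfrac{p-2}{2}\,z\otimes z/|z|^{2}\big)$ to reduce everything to two-sided bounds on $J=\int_{0}^{1}|z(t)|^{(p-2)/2}\,dt$, and then control $J$ by $(\max(|\xi|,|\eta|))^{(p-2)/2}$ via elementary geometry, splitting $p\ge 2$ from $1<p<2$. The case distinction you make in the subquadratic range (annulus estimate when $|\xi-\eta|<m/2$, integrability of $|s|^{(p-2)/2}$ otherwise) is exactly what is needed, and the collinear degenerate case is handled cleanly.

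There is nothing to compare against here: the paper does not supply a proof of this lemma but merely quotes it from Giusti's monograph (Lemma~8.3 there). Your proof is essentially the one found in that reference, so you are in complete agreement with the paper's intended argument.
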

For further needs we state the following lemma for the function ${Z_\delta (\xi)= (1+|\xi|^{2})^\frac{\delta}{2} \xi} $, whose proof  is obtained by combining \cite[formula 2.1]{Gia} and \cite[Lemma 2.2]{Fusco}.
\begin{lem}\label{Fusco}
Let $\delta \geq 0$. There exists a constant $c_1=c_1(\delta)>0$  such that
  $$ c_1 \langle {Z}_\delta (\xi)- {Z}_\delta (\eta), \xi- \eta \rangle \geq   \, |\xi-\eta|^{2}(1+|\xi|^{2}+|\eta|^{2})^{\frac{\delta}{2}},$$
  for any $\xi, \eta \in \mathbb{R}^{n}$.
\end{lem}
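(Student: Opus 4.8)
The plan is to reduce the statement to the integral representation of the increment of $Z_\delta$ along the segment joining $\eta$ to $\xi$, plus one elementary convexity-type estimate, which is exactly the content of the quoted references. Write $q:=\delta+2\ge 2$, so that $Z_\delta(\xi)=(1+|\xi|^2)^{(q-2)/2}\xi$ is a scalar multiple of the gradient of the smooth convex function $\Phi(\xi):=\tfrac1q(1+|\xi|^2)^{q/2}$. For $\xi\neq\eta$ I would write
\[
\langle Z_\delta(\xi)-Z_\delta(\eta),\,\xi-\eta\rangle=\int_0^1\big\langle D_\xi Z_\delta\big(\eta+t(\xi-\eta)\big)(\xi-\eta),\,\xi-\eta\big\rangle\,dt,
\]
and compute the Jacobian: for $\zeta\in\mathbb{R}^n$ one has $D_\xi Z_\delta(\zeta)=(1+|\zeta|^2)^{\delta/2}\,\mathrm{Id}+\delta(1+|\zeta|^2)^{(\delta-2)/2}\,\zeta\otimes\zeta$. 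Since $\delta\ge 0$, the rank-one term is positive semidefinite, hence $\langle D_\xi Z_\delta(\zeta)\lambda,\lambda\rangle\ge(1+|\zeta|^2)^{\delta/2}|\lambda|^2$ for every $\lambda\in\mathbb{R}^n$. Plugging $\lambda=\xi-\eta$ and $\zeta=\eta+t(\xi-\eta)$ gives
\[
\langle Z_\delta(\xi)-Z_\delta(\eta),\,\xi-\eta\rangle\ \ge\ |\xi-\eta|^2\int_0^1\big(1+|\eta+t(\xi-\eta)|^2\big)^{\delta/2}\,dt.
\]

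The only nontrivial point left is the elementary inequality
\[
\int_0^1\big(1+|\eta+t(\xi-\eta)|^2\big)^{\delta/2}\,dt\ \ge\ c(\delta)\,\big(1+|\xi|^2+|\eta|^2\big)^{\delta/2},
\]
which is precisely \cite[Lemma 2.2]{Fusco} (combined with the integral representation recorded in \cite[formula 2.1]{Gia}). I would prove it directly: assume without loss of generality $|\xi|\ge|\eta|$; since $\eta+t(\xi-\eta)=\xi-(1-t)(\xi-\eta)$ and $|\xi-\eta|\le 2|\xi|$, one gets $|\eta+t(\xi-\eta)|\ge(2t-1)|\xi|\ge\tfrac12|\xi|$ for all $t\in[\tfrac34,1]$. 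Hence on that subinterval $1+|\eta+t(\xi-\eta)|^2\ge\tfrac14(1+|\xi|^2)\ge\tfrac18(1+|\xi|^2+|\eta|^2)$, and because $\delta\ge 0$ the integrand is monotone nondecreasing in its argument, so integrating over $[\tfrac34,1]$ yields the claim with an explicit constant (one may take $c(\delta)=\tfrac14\,8^{-\delta/2}$).

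Combining the last two displays gives $\langle Z_\delta(\xi)-Z_\delta(\eta),\xi-\eta\rangle\ge c(\delta)\,|\xi-\eta|^2(1+|\xi|^2+|\eta|^2)^{\delta/2}$ for $\xi\neq\eta$, while for $\xi=\eta$ both sides vanish; the lemma then follows with $c_1=1/c(\delta)$. I do not anticipate a genuine obstacle here: the pointwise Jacobian bound uses nothing beyond $\delta\ge0$, and the segment estimate is classical, which is exactly why the statement is attributed to \cite{Gia,Fusco}. The only mild care needed is the elementary geometric argument on the segment, handled above.
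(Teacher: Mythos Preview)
Your argument is correct and is precisely the route the paper indicates: the integral representation of $Z_\delta(\xi)-Z_\delta(\eta)$ along the segment (this is \cite[formula 2.1]{Gia}) together with the segment lower bound $\int_0^1(1+|\eta+t(\xi-\eta)|^2)^{\delta/2}\,dt\ge c(\delta)(1+|\xi|^2+|\eta|^2)^{\delta/2}$ (this is \cite[Lemma 2.2]{Fusco}); the Jacobian lower bound uses only $\delta\ge 0$, and your explicit verification of the segment estimate on $[3/4,1]$ is fine. The paper gives no further proof beyond citing these two ingredients, so there is nothing to compare.
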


We conclude this section with an algebraic inequality, which can be found in \cite{Duzaar, giusti}.
\begin{lem}\label{Duzaar}
  For any $\alpha > 0$, there exists a constant $c = c(\alpha)$ such that, for all $\eta, \zeta \in \mathbb{R}^n \setminus \{0\}$, we have
\[
\frac{1}{c} \left||\eta|^{\alpha - 1} \eta - |\zeta|^{\alpha - 1} \zeta  \right| \leq \left( |\eta| + |\zeta| \right)^{\alpha - 1} |\eta - \zeta| \leq c \left| \, |\eta|^{\alpha - 1} \eta - |\zeta|^{\alpha - 1} \zeta \,  \right|.
\]
\end{lem}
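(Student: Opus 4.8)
\textbf{Proof strategy for Theorem \ref{thmBfinito}.}

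The plan is to follow the scheme announced in the introduction: approximation, a uniform a priori estimate for smooth minimizers, and a passage to the limit. First, I would fix concentric balls $B_{R/4}\subset B_R\Subset\Omega$ and consider, for $\varepsilon>0$, the regularized integrand $f_\varepsilon(x,u,\xi)=\sum_i \frac{b_i(x)}{p_i}|\xi_i|^{p_i}+\varepsilon(1+|\xi|^{p_n/2})^2-\omega(x)u$, with $b_i$ Lipschitz approximating $a_i$ (e.g.\ by mollification, so that $b_i\to a_i$ in $W^{1,r}_{\mathrm{loc}}$ with the same bounds $\lambda\le b_i\le\Lambda$ and $\|Db_i\|_{L^r}$ controlled by $\|Da_i\|_{L^r}$), and also a smoothed forcing term. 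Let $u_\varepsilon$ minimize $\mathcal F_\varepsilon$ over $u+W^{1,\mathbf p}_0$ on a fixed ball; the added nondegenerate term $\varepsilon(1+|\xi|^{p_n/2})^2$ makes $f_\varepsilon$ satisfy uniform, albeit $\varepsilon$-dependent, ellipticity of standard $p_n$-growth type, so by the preliminary regularity lemma for Lipschitz coefficients (the first step announced, proved in Section \ref{PreREG}) one has $u_\varepsilon\in W^{1,\mathbf p}_{\mathrm{loc}}\cap L^\infty_{\mathrm{loc}}$ with $V_{p_i}((u_\varepsilon)_{x_i})\in W^{1,2}_{\mathrm{loc}}$ and $D^2 u_\varepsilon$ locally square-integrable with the degenerate weight.

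The heart of the argument is the $\varepsilon$-uniform a priori estimate. Here I would differentiate the Euler--Lagrange equation of $\mathcal F_\varepsilon$ in the direction $x_s$, test with $\eta^2 (u_\varepsilon)_{x_s}$ for a cutoff $\eta$ supported in $B_R$ with $\eta\equiv 1$ on $B_{R/4}$, and sum over $s$. Using \eqref{A2} one gets on the left-hand side the good term $\sum_{i,s}\int \eta^2 (|(u_\varepsilon)_{x_i}|^2)^{(p_i-2)/2}|(u_\varepsilon)_{x_i x_s}|^2$ plus the analogous $\varepsilon$-term; on the right-hand side appear (i) a term with $\nabla\eta$, absorbing $\sum_i\int |\nabla\eta|^2 |(u_\varepsilon)_{x_i}|^{p_i}$ up to lower order, (ii) the coefficient term from $Db_i$, which by \eqref{A4}-type bounds is controlled by $\int g\,\eta^2\sum_i |(u_\varepsilon)_{x_i}|^{p_i-1}|D^2u_\varepsilon|$, handled by Young's inequality splitting off a small multiple of the good term and a power of $g$ in $L^r$, and (iii) the forcing term $\sum_s\int \omega_{x_s}\,\eta^2(u_\varepsilon)_{x_s x_s}$ plus boundary-type contributions, estimated by Young's inequality with the pairing $|\omega_{x_i}|\in L^{(p_i+2)/(p_i+1)}$ against $|(u_\varepsilon)_{x_i x_i}||(u_\varepsilon)_{x_i}|^{(p_i-2)/2}$ weighted suitably, which is exactly why the exponent $\frac{p_i+2}{p_i+1}$ is the natural one. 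The crucial reabsorption uses Proposition \ref{LemmaBrasco} (bounding $\int_{B_\rho}|(u_\varepsilon)_{x_k}|^{p_k+2}$ by $\|u_\varepsilon\|_\infty^2$ times the Hessian term plus lower order) together with the $L^\infty$ bound coming from Theorem \ref{ulimitatoCupini}, whose hypothesis is guaranteed by Lemma \ref{LemmaTroisi} giving $\omega\in L^t_{\mathrm{loc}}$ and by the assumption $p_n<\overline p^*/t'$ in \eqref{Ipotesip}; the anisotropic Sobolev/Troisi inequality is used to close the loop among the $L^{p_i+2}$ norms of the various $(u_\varepsilon)_{x_i}$, and the condition $p_n<p_1+2$ in \eqref{Ipotesip} is what makes this interpolation work with a finite exponent $\sigma$. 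Finally the iteration Lemma \ref{lm3} removes the $\theta Z(s)$ self-absorption and produces the stated estimate on $B_{R/4}$ with constants depending only on $n,p_i,\lambda,\Lambda,R,\|u\|_\infty$ and $\sigma=\sigma(n,p_i,r)$, uniformly in $\varepsilon$.

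The last step is the passage to the limit $\varepsilon\to 0$. The uniform bounds give, up to subsequences, $u_\varepsilon\rightharpoonup \tilde u$ in $W^{1,\mathbf p}_{\mathrm{loc}}$ with $V_{p_i}((u_\varepsilon)_{x_i})$ bounded in $W^{1,2}_{\mathrm{loc}}$ and hence weakly convergent; one identifies the limit as the minimizer $u$ by a standard $\Gamma$-convergence / lower semicontinuity argument (using that $\mathcal F_\varepsilon\to\mathcal F$ and that $b_i\to a_i$, together with uniqueness of the minimizer up to constants, or by comparing energies), passes the a priori estimate to $u$ by weak lower semicontinuity of the norms, and obtains $V_{p_i}(u_{x_i})\in W^{1,2}_{\mathrm{loc}}$ and the second displayed estimate; the first displayed estimate then follows from Proposition \ref{LemmaBrasco} applied to $u$. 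I expect the main obstacle to be the a priori estimate: precisely, managing the anisotropy when reabsorbing the $\nabla\eta$ and coefficient terms — the powers $|(u_\varepsilon)_{x_i}|^{p_i}$ and $|(u_\varepsilon)_{x_i}|^{p_i+2}$ do not match across different $i$, so one must use the anisotropic embedding and the constraints in \eqref{Ipotesip} and \eqref{Ipotesir} carefully to interpolate and to keep the constant independent of $\varepsilon$, and the forcing term has to be split so that its contribution lands exactly in the Hessian weight $|(u_\varepsilon)_{x_i}|^{p_i-2}$ that appears on the left.
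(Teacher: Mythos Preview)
Your proposal addresses the wrong statement. The lemma in question is the elementary algebraic inequality
\[
\frac{1}{c}\left||\eta|^{\alpha-1}\eta-|\zeta|^{\alpha-1}\zeta\right|\le(|\eta|+|\zeta|)^{\alpha-1}|\eta-\zeta|\le c\left||\eta|^{\alpha-1}\eta-|\zeta|^{\alpha-1}\zeta\right|,
\]
for which the paper gives no proof but simply cites \cite{Duzaar,giusti}. What you have written is a (reasonable) outline of the proof of Theorem~\ref{thmBfinito}, the main higher-differentiability result, not of this lemma.

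If you want to prove the lemma itself, the standard route is short and self-contained: write $|\eta|^{\alpha-1}\eta-|\zeta|^{\alpha-1}\zeta=\int_0^1\frac{d}{dt}\bigl(|\zeta+t(\eta-\zeta)|^{\alpha-1}(\zeta+t(\eta-\zeta))\bigr)\,dt$, compute the derivative, and use that $\int_0^1|\zeta+t(\eta-\zeta)|^{\alpha-1}\,dt$ is comparable to $(|\eta|+|\zeta|)^{\alpha-1}$ (distinguishing $\alpha\ge1$ and $0<\alpha<1$). Alternatively, one checks the inequalities directly after reducing by homogeneity to $|\zeta|=1$, $|\eta|\le1$.
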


\subsection{Difference quotient}
\label{secquo}
In this section we recall some properties of the finite difference quotient operator that will be needed in the sequel. 

\begin{dfn}
Let $F$ be a function defined in an open set $\Omega \subset \mathbb{R}^n$, let $h$ be a real number,  the finite difference operator $\tau_{s,h}F(x)$ is defined as follows
$$ 
\tau_{s,h}F(x) :=F(x+he_s)-F(x) ,$$
where $e_s$ denotes the direction of the $x_s$ axis.
\end{dfn}
The function $\tau_{s,h}F$ is defined in the set
$$\Omega_{|h|}: = \{ x \in \Omega : \mathrm{dist}(x,\partial \Omega)> |h|  \}= \{  x \in \Omega : x+he_s \in \Omega \}.$$
We start with the description of some elementary properties that can be found, for example, in \cite{giusti}. When no confusion can arise, we shall omit the index $s$, and we shall write simply $\tau_{h}$ instead of $ \tau_{s, h}$
\begin{prop}\label{rapportoincrementale}
Let $F \in W^{1,p}(\Omega)$, with $p \geq1$, and let $G:\Omega \rightarrow \mathbb{R}$ be a measurable function.
Then:
\\(i) $\tau_{h}F \in W^{1,p}(\Omega_{|h|})$ and 
$$D_{i}(\tau_{h}F)=\tau_{h}(D_{i}F).$$
(ii) If at least one of the functions $F$ or $G$ has support contained in $\Omega_{|h|}$, then
$$\displaystyle\int_{\Omega}F \tau_h G   dx = \displaystyle\int_{\Omega} G \tau_{-h}F dx.$$
(iii) We have $$\tau_{h} (FG)(x)= F(x+h)\tau_{h} G(x)+G(x) \tau_{h} F(x).$$
\end{prop}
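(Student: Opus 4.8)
\textbf{Proof strategy for Proposition \ref{rapportoincrementale}.}

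These are the standard elementary properties of the finite difference operator, so the plan is simply to verify each item directly from the definition $\tau_h F(x) = F(x+he_s) - F(x)$, exploiting the translation invariance of Lebesgue measure. For (i), I would first recall that translation is a bounded linear operator on $W^{1,p}$: if $F \in W^{1,p}(\Omega)$ then $x \mapsto F(x+he_s)$ belongs to $W^{1,p}(\Omega_{|h|})$ with weak derivative $x \mapsto (D_i F)(x+he_s)$, which follows by approximating $F$ by smooth functions (density of $C^\infty \cap W^{1,p}$) and passing to the limit, since the chain rule for the affine change of variables $x \mapsto x + he_s$ is trivial on smooth functions and both sides converge in $L^p(\Omega_{|h|})$. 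Then $\tau_h F$ is a difference of two $W^{1,p}(\Omega_{|h|})$ functions, hence lies in $W^{1,p}(\Omega_{|h|})$, and by linearity of the weak derivative $D_i(\tau_h F)(x) = (D_i F)(x+he_s) - (D_i F)(x) = \tau_h(D_i F)(x)$.

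For (ii), the discrete integration-by-parts formula, I would write out
\[
\int_\Omega F(x)\,\tau_h G(x)\,dx = \int_\Omega F(x)\,G(x+he_s)\,dx - \int_\Omega F(x)\,G(x)\,dx,
\]
and in the first integral perform the substitution $y = x + he_s$, which has unit Jacobian; the support assumption guarantees that the integrand is supported inside $\Omega_{|h|}$ so that after translation it is still supported in $\Omega$ and no boundary terms arise, giving $\int_\Omega F(y - he_s) G(y)\,dy = \int_\Omega G(y)\,\tau_{-h}F(y)\,dy + \int_\Omega F(y)G(y)\,dy$; subtracting the common term $\int_\Omega FG$ yields the claim. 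Item (iii), the discrete Leibniz rule, is a purely algebraic identity: adding and subtracting $F(x+he_s)G(x)$,
\[
\tau_h(FG)(x) = F(x+he_s)G(x+he_s) - F(x)G(x) = F(x+he_s)\big(G(x+he_s)-G(x)\big) + \big(F(x+he_s)-F(x)\big)G(x),
\]
which is exactly $F(x+h)\tau_h G(x) + G(x)\tau_h F(x)$ in the paper's shorthand notation.

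There is no real obstacle here; the only point requiring minor care is the measurability/integrability bookkeeping in (ii) — ensuring the products are integrable and that the support condition legitimately removes the boundary contribution after translation — and in (i) the justification that translation preserves weak differentiability, which is handled by the density argument. Since this is a recollection of well-known facts (referenced to \cite{giusti}), a brief verification along these lines suffices.
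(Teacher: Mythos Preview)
Your verification of all three items is correct and is exactly the standard argument; the paper itself does not prove this proposition but simply records it as a collection of elementary facts from \cite{giusti}. There is nothing to add.
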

The next result about the finite difference operator is a kind of integral version of Lagrange Theorem.
\begin{lem}\label{ldiff}
If $0<\rho<R,$ $|h|<\frac{R-\rho}{2},$ $1<p<+\infty$ and $F, \, D_s F\in L^{p}(B_{R})$, then
\begin{center}
$\displaystyle\int_{B_{\rho}} |\tau_{s, h}F(x)|^{p} dx \leq c(n,p)|h|^{p} \displaystyle\int_{B_{R}} |D_sF(x)|^{p} dx$.
\end{center}
Moreover,
\begin{center}
$\displaystyle\int_{B_{\rho}} |F(x+h)|^{p} d x \leq  \displaystyle\int_{B_{R}} |F(x)|^{p}d x$.
\end{center}
\end{lem}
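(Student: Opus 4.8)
\textbf{Proof strategy for Lemma \ref{ldiff}.}

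The plan is to prove the two inequalities by the standard density-plus-Lagrange argument. First I would reduce to the case $F \in C^\infty(\overline{B_R})$: since $F, D_s F \in L^p(B_R)$ and $p < \infty$, a routine mollification gives a sequence $F_\nu \in C^\infty$ with $F_\nu \to F$ and $D_s F_\nu \to D_s F$ in $L^p$ on any slightly smaller ball, and the desired estimates pass to the limit. For smooth $F$, the fundamental theorem of calculus in the $e_s$-direction gives, for $|h| < \tfrac{R-\rho}{2}$ and $x \in B_\rho$,
\[
\tau_{s,h} F(x) = F(x + h e_s) - F(x) = \int_0^1 h\, D_s F(x + t h e_s)\, dt,
\]
so that by Jensen's (or H\"older's) inequality
\[
|\tau_{s,h} F(x)|^p \leq |h|^p \int_0^1 |D_s F(x + t h e_s)|^p\, dt.
\]
Then I would integrate over $B_\rho$, apply Fubini to exchange the $x$-integral with the $t$-integral, and for each fixed $t \in [0,1]$ note that the translated set $B_\rho + t h e_s$ is contained in $B_R$ because $|t h| \le |h| < \tfrac{R-\rho}{2} < R - \rho$; hence
\[
\int_{B_\rho} |D_s F(x + t h e_s)|^p\, dx = \int_{B_\rho + t h e_s} |D_s F(y)|^p\, dy \leq \int_{B_R} |D_s F(y)|^p\, dy,
\]
which yields the first claim with $c(n,p) = 1$ in the smooth case (a constant $c(n,p)$ is left in the statement only to absorb the density step, where one may need to pass to a marginally larger radius). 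The second inequality is even simpler: by the change of variables $y = x + h e_s$,
\[
\int_{B_\rho} |F(x + h e_s)|^p\, dx = \int_{B_\rho + h e_s} |F(y)|^p\, dy \leq \int_{B_R} |F(y)|^p\, dy,
\]
again using $B_\rho + h e_s \subset B_R$, and then pass to the limit along $F_\nu$.

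There is essentially no substantial obstacle here; the only point requiring a little care is the mollification step, making sure that $F_\nu$ and $D_s F_\nu$ converge in $L^p$ on a domain large enough to contain all the translates $B_\rho + t h e_s$ appearing above — this is handled by choosing an intermediate radius $\rho < \rho' < R$ with $|h| < \tfrac{\rho' - \rho}{2}$ and $\rho' < R$, performing the smooth estimate on $B_{\rho'}$, and only then letting the mollification parameter go to zero. Everything else is the Lagrange-theorem computation and Fubini.
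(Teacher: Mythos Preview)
Your argument is correct and is exactly the standard Lagrange--Fubini proof one finds in the textbooks. Note, however, that the paper does not give its own proof of this lemma: it is simply stated as a known fact about difference quotients, with reference to Giusti's book \cite{giusti}, so there is nothing to compare against. One small remark: the constant can in fact be taken equal to $1$ even after the density step, since your smooth-case estimate passes to the limit directly without enlarging the right-hand side; the $c(n,p)$ in the statement is just a harmless safety margin.
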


We conclude by recalling the following
\begin{lem}\label{Lemmahzero}
If $0<\rho<R,$ $|h|<\frac{R-\rho}{2},$ $1<p<+\infty$ and $F\in L^{p}(B_{R})$. If there exists a positive constant $C$ such that
\begin{align}
\displaystyle\int_{B_{\rho}} |\tau_{s, h}F(x)|^{p} dx \leq C|h|^{p}, 
\end{align}
for every $h$, then the distributional derivative $D_s F$ belongs to $L^p(B_\rho).$\\
Moreover it holds
\begin{align}
\displaystyle\int_{B_{\rho}} |DF(x)|^{p} dx \leq C .
\end{align}
\end{lem}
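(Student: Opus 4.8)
The plan is the classical weak-compactness argument, exploiting that $L^p(B_\rho)$ is reflexive for $1<p<\infty$ together with the characterisation of the distributional derivative by integration by parts.

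First, for $0<|h|<\frac{R-\rho}{2}$ I set $v_h := h^{-1}\,\tau_{s,h}F$ on $B_\rho$; this is well defined, since $|h|<R-\rho$ forces $x+he_s\in B_R$ whenever $x\in B_\rho$. By hypothesis $\int_{B_\rho}|v_h|^p\,dx\le C$ for all such $h$, so the family $\{v_h\}$ is bounded in $L^p(B_\rho)$. Reflexivity of $L^p(B_\rho)$ then provides a sequence $h_k\to 0$ and a function $v\in L^p(B_\rho)$ with $v_{h_k}\rightharpoonup v$ weakly in $L^p(B_\rho)$.

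Second, I identify $v$ with $D_s F$. Fix $\varphi\in C_0^\infty(B_\rho)$ and take $|h|$ small enough that $\operatorname{supp}\varphi\Subset (B_\rho)_{|h|}$. Then Proposition \ref{rapportoincrementale}(ii), applied with the compactly supported factor $\varphi$, gives
\[
\int_{B_\rho} v_h\,\varphi\,dx=\frac1h\int_{B_\rho}\tau_{s,h}F\cdot\varphi\,dx=\int_{B_\rho}F\cdot\frac{\tau_{s,-h}\varphi}{h}\,dx .
\]
Since $\varphi$ is smooth with compact support, $\frac{\tau_{s,-h}\varphi}{h}\to -\,\partial_{x_s}\varphi$ uniformly on $B_\rho$ as $h\to 0$, with supports contained in a fixed compact subset of $B_\rho$; hence the right-hand side converges to $-\int_{B_\rho}F\,\partial_{x_s}\varphi\,dx$. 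Along the subsequence $h_k$, the left-hand side converges to $\int_{B_\rho}v\,\varphi\,dx$ by weak convergence. Therefore $\int_{B_\rho}v\,\varphi\,dx=-\int_{B_\rho}F\,\partial_{x_s}\varphi\,dx$ for every $\varphi\in C_0^\infty(B_\rho)$, which is exactly the statement that the distributional derivative $D_s F$ equals $v\in L^p(B_\rho)$.

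Finally, the quantitative bound follows from the weak lower semicontinuity of the $L^p$-norm:
\[
\int_{B_\rho}|D_s F|^p\,dx=\|v\|_{L^p(B_\rho)}^p\le\liminf_{k\to\infty}\|v_{h_k}\|_{L^p(B_\rho)}^p\le C .
\]
I do not expect a genuine obstacle here; the only points needing care are the sign bookkeeping in the integration-by-parts identity and the verification that the support condition legitimising Proposition \ref{rapportoincrementale}(ii) holds once $|h|$ is small. One could alternatively argue by mollifying $F$ instead of invoking reflexivity, but the weak-compactness route is the shortest.
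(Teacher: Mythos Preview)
The paper does not actually supply a proof of this lemma: it is merely ``recalled'' in the preliminaries (Section~\ref{secquo}) as a standard fact about difference quotients, in the same spirit as the companion Lemma~\ref{ldiff}, with the implicit reference being Giusti's book~\cite{giusti}. Your argument is correct and is precisely the classical weak-compactness proof one finds there: bound the difference quotients in $L^p$, extract a weakly convergent subsequence by reflexivity, identify the limit as the distributional derivative via the discrete integration-by-parts identity, and conclude the norm bound by weak lower semicontinuity. One cosmetic point: the paper's ``Moreover'' clause writes $|DF|$ rather than $|D_sF|$, which should be read either as a typo or as the statement obtained after applying your argument for each direction $s=1,\dots,n$; your proof, which establishes $\int_{B_\rho}|D_sF|^p\,dx\le C$, is exactly what is used later in the paper (e.g.\ in passing from~\eqref{Formulacompleta} to~\eqref{sommauxij}).
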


\subsection{Anisotropic Sobolev spaces}
Here, we collect some well-known inequalities in the setting of anisotropic Sobolev spaces.
\\Let $p_i \ge 1$, $i=1,...,n$. We denote by $\overline{p}$ the harmonic average of $p_i$ and by $\overline{p}^*$ the Sobolev conjugate exponent of $\overline{p}$ defined at \eqref{definizionePi}.
    
    We recall the following embedding results for anisotropic Sobolev spaces. We refer to \cite{ Acerbi, feopass,troisi}.
 \begin{lem}\label{anisopoincare}
      Let $E \subset \mathbb{R}^n$ be a bounded open set and let $  p_i \geq 1$, for all $i=1, ..., n$. If $u \in W_0^{1, \mathbf{p}}(E)$, then there exists a positive constant $c $ depending on $n$, $p_i$ and, only in the case $\overline{p} \ge n$, also on $\overline{p}^*$ and $E$, such that
      \begin{align*}
      \lvert \lvert u \rvert \rvert_{L^{\overline{p}^*}(E)}\leq c \sum_{i=1}^{n} \lvert \lvert u_{x_i} \rvert \rvert_{L^{p_i}(E)}.
   \end{align*}
 \end{lem}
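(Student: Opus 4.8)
The plan is to establish the sharp product form of the inequality,
\[
\|u\|_{L^{\overline p^*}(E)}\le c\,\prod_{i=1}^{n}\|u_{x_i}\|_{L^{p_i}(E)}^{1/n},
\]
from which the stated estimate is immediate by the arithmetic--geometric mean inequality $\prod_i a_i^{1/n}\le\frac1n\sum_i a_i\le\sum_i a_i$. By density of $C_c^\infty(E)$ in $W_0^{1,\mathbf{p}}(E)$ it suffices to prove this for $u\in C_c^\infty(E)$, extended by $0$ to $\mathbb R^n$: for a general $u$ one approximates in the $W^{1,\mathbf p}$-norm, passes to the limit on the right-hand side by norm convergence and on the left by Fatou's lemma along an a.e.\ convergent subsequence. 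I treat first the main case $\overline p<n$.

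The heart of the argument is a weighted version of Gagliardo's iterated one-dimensional estimate. Writing $\hat x_i$ for the point $x$ with its $i$-th coordinate removed, for every $i$ and every $s_i\ge 1$ the fundamental theorem of calculus (together with the compact support of $u$) gives
\[
|u(x)|^{s_i}\le s_i\int_{\mathbb R}|u|^{s_i-1}\,|u_{x_i}|\,dt\;=:\;H_i(\hat x_i),
\]
since $|D_{x_i}(|u|^{s_i})|=s_i|u|^{s_i-1}|u_{x_i}|$ a.e. Put $\beta:=\frac1n\sum_{i=1}^{n}s_i$, so that taking the product over $i$ yields $|u(x)|^{n\beta}\le\prod_{i=1}^{n}H_i(\hat x_i)$, hence $|u(x)|^{\frac{n\beta}{n-1}}\le\prod_{i=1}^{n}H_i(\hat x_i)^{1/(n-1)}$. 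Integrating over $\mathbb R^n$ and using, at each of the $n$ successive one-dimensional integrations, the generalized H\"older inequality with $n-1$ exponents equal to $n-1$ applied to the $n-1$ factors that still depend on the integration variable (each $H_i$ being independent of $x_i$), one obtains the basic estimate
\[
\int_{\mathbb R^n}|u|^{\frac{n\beta}{n-1}}\,dx\le\prod_{i=1}^{n}\left(s_i\int_{\mathbb R^n}|u|^{s_i-1}|u_{x_i}|\,dx\right)^{1/(n-1)}.
\]

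Next I apply H\"older's inequality with the conjugate pair $(p_i,p_i')$ to each integral on the right, $\int|u|^{s_i-1}|u_{x_i}|\,dx\le\left(\int|u|^{(s_i-1)p_i'}\,dx\right)^{1/p_i'}\|u_{x_i}\|_{L^{p_i}}$, and choose the still-free weights $s_i\ge 1$ so that the power of $|u|$ thereby produced is independent of $i$ and coincides with the left-hand exponent, i.e.\ $(s_i-1)p_i'=\frac{n\beta}{n-1}$ for all $i$. Summing the resulting relations $s_i=1+\frac{n\beta}{(n-1)p_i'}$ and inserting $\beta=\frac1n\sum_i s_i$ together with $\sum_i\frac1{p_i'}=n-\frac n{\overline p}$ forces $\beta=\frac{(n-1)\overline p}{n-\overline p}$, and hence $\frac{n\beta}{n-1}=\frac{n\overline p}{n-\overline p}=\overline p^*$ (all quantities positive precisely because $\overline p<n$; also each $s_i\ge1$, with $s_i=1$ when $p_i=1$). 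With this choice the basic estimate becomes
\[
\int_{\mathbb R^n}|u|^{\overline p^*}\,dx\le\left(\prod_{i=1}^{n}s_i^{1/(n-1)}\right)\left(\int_{\mathbb R^n}|u|^{\overline p^*}\,dx\right)^{\frac1{n-1}\sum_i\frac1{p_i'}}\prod_{i=1}^{n}\|u_{x_i}\|_{L^{p_i}}^{1/(n-1)},
\]
and since $\frac1{n-1}\sum_i\frac1{p_i'}=\frac{n(\overline p-1)}{(n-1)\overline p}<1$ (again as $\overline p<n$) while $\int_{\mathbb R^n}|u|^{\overline p^*}\,dx<\infty$ for $u\in C_c^\infty$, one may divide and absorb that factor on the left; raising the outcome to the power $\frac{n-1}{n}$ produces the product inequality with $c=c(n,p_i)$, hence the claim for $\overline p<n$. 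When $\overline p\ge n$, $\overline p^*$ denotes any finite exponent, and the inequality follows from the subcritical case by a soft argument exploiting the boundedness of $E$: since then $L^{p_i}(E)\hookrightarrow L^{q_i}(E)$ for any $q_i\le p_i$, one picks a tuple $(q_i)$ with $q_i\le p_i$ whose harmonic mean is $<n$ and whose Sobolev conjugate is at least the target exponent (iterating finitely many times if needed), applies the already proven case, and absorbs the embedding constants --- which is exactly why $c$ is permitted to depend on $\overline p^*$ and on $E$ in this regime.

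The argument is not conceptually hard; the main obstacle is bookkeeping. One must (i) carry out the successive-integration version of Gagliardo's lemma carefully, tracking at each step which of the $H_i$ still depend on the current variable and checking that exactly $n-1$ of them do, and (ii) run the elementary but delicate algebra of the weights $s_i$: verifying that the consistency relation forces $\beta=\frac{(n-1)\overline p}{n-\overline p}$, that then the exponent on the left equals $\overline p^*$, and that the power $\frac1{n-1}\sum_i\frac1{p_i'}$ of the absorbed term is strictly below $1$. The borderline/supercritical regime $\overline p\ge n$ is the only point where a separate (purely routine) argument is required.
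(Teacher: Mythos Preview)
Your argument is correct and is essentially Troisi's classical proof of the anisotropic Sobolev inequality: the weighted Gagliardo iteration with the choice $(s_i-1)p_i'=\overline p^{*}$, followed by absorption of the $|u|^{\overline p^*}$ factor, and then the AM--GM step. The bookkeeping you flag (the successive one--dimensional integrations and the algebra forcing $\beta=\frac{(n-1)\overline p}{n-\overline p}$) is exactly what has to be checked, and the soft reduction for $\overline p\ge n$ via $L^{p_i}(E)\hookrightarrow L^{q_i}(E)$ on a bounded set is the standard way to handle that regime.

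The paper, however, does not prove this lemma at all: it is stated in the preliminaries as a known embedding result and attributed to \cite{Acerbi,feopass,troisi}. So there is no ``paper's own proof'' to compare against; what you have written is a self-contained derivation of the cited result, and it matches the original source (Troisi).
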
   
\begin{lem}\label{LemmaTroisi}
    Let $E \subset \mathbb{R}^n$ be a bounded open set and consider $u \in W^{1, \mathbf{p}}(E)$, with $  p_i \geq 1$, for all $i=1, ..., n$.\\
    If $\overline{p} \le n$, then there exists a positive constant $\gamma_1$ depending on $n, p_i$ and, only in the case $\overline{p}=n$, also on $\overline{p}^*$ and $E$, such that
    \begin{align*}
        \lvert \lvert u \rvert \rvert_{L^{\overline{p}^*}(E)} \leq \gamma_1  \left[ \sum_{i=1}^{n} \lvert \lvert u_{x_i} \rvert \rvert_{L^{p_i}(E)} + \lvert \lvert u \rvert \rvert_{L^1(E)} \right].
    \end{align*}
      If $\overline{p} > n$,  there exists a positive constant $\gamma_2$ depending only on $ n, p_i$ and $E$, such that
    \begin{align*}
        \lvert \lvert u \rvert \rvert_{L^{\infty}(E)} \leq \gamma_2  \left[ \sum_{i=1}^{n} \lvert \lvert u_{x_i} \rvert \rvert_{L^{p_i}(E)} + \lvert \lvert u \rvert \rvert_{L^1(E)} \right].
    \end{align*}
\end{lem}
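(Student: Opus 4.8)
The statement is a form of the classical anisotropic Sobolev (Troisi) embedding, and I would prove it by reducing to smooth functions on a cube and running the Gagliardo--Nirenberg slicing argument, carefully carrying along the zero–order term. By mollifying $u$ on slightly smaller open sets it suffices to prove both estimates for $u\in C^1(\overline E)$, and since the conclusions are local and will be used on balls one may take $E$ to be a cube $Q$ (for a general bounded open set with, say, Lipschitz boundary one first extends $u$ to a larger cube by a reflection--type operator which is simultaneously bounded $W^{1,p_i}\to W^{1,p_i}$ for every $i$, so that the extension $\tilde u$ obeys $\|\tilde u_{x_i}\|_{L^{p_i}}\le c\,\|u_{x_i}\|_{L^{p_i}(E)}$ and $\|\tilde u\|_{L^1}\le c\,\|u\|_{L^1(E)}$, and one restricts back at the end). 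I note in passing that when $p_n<\overline p^*$ there is a quicker route --- extend $u$, multiply by a cutoff to land in $W^{1,\mathbf p}_0$ of a larger cube, apply Lemma~\ref{anisopoincare}, and control the arising lower--order norms $\|u\|_{L^{p_i}}$ by interpolation between $L^1$ and $L^{\overline p^*}$ --- but to cover all admissible exponents (in particular $\max_i p_i\ge\overline p^*$, which can occur) I would argue directly.

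The core is the slicing estimate. For $x\in Q$ write $\hat x_i$ for $x$ with its $i$-th coordinate removed; integrating $u$ along the $i$-th coordinate line from an arbitrary point $s$ to $x_i$ and averaging in $s$ produces, for each $i$, a function $G_i=G_i(\hat x_i)$ (independent of $x_i$) with $|u(x)|\le G_i(\hat x_i)$ and $G_i(\hat x_i)\le c\big(\int|u|+\int|u_{x_i}|\big)$ along that line, hence $|u(x)|^n\le\prod_{i=1}^n G_i(\hat x_i)$. Applying this to $w=|u|^{\gamma}$ with $\gamma\ge 1$, using $w_{x_i}=\gamma\,|u|^{\gamma-1}\tfrac{u}{|u|}\,u_{x_i}$, raising to the power $\tfrac{1}{n-1}$ and integrating successively in $x_1,\dots,x_n$ by the generalized Gagliardo inequality $\int_{\mathbb R^n}\prod_i f_i(\hat x_i)^{\frac1{n-1}}\le\prod_i\|f_i\|_{L^1}^{\frac1{n-1}}$, one gets
\[
\Big(\int_Q|u|^{\frac{n\gamma}{n-1}}\Big)^{\frac{n-1}{n}}\le c\prod_{i=1}^n\Big(\int_Q|u|^{\gamma-1}\big(|u_{x_i}|+|u|\big)\Big)^{\frac1n},
\]
and Hölder with exponents $p_i,p_i'$ on each factor bounds the right–hand side by $c\prod_i\big(\|u\|_{L^{(\gamma-1)p_i'}(Q)}^{\gamma-1}\|u_{x_i}\|_{L^{p_i}(Q)}+\|u\|_{L^{\gamma}(Q)}^{\gamma}\big)^{1/n}$.

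With $\gamma=1$ this already gives the case $p_i\equiv 1$. For general $\mathbf p$ one chooses $\gamma$ --- in general through a finite iteration $\gamma_{k+1}=\tfrac{n}{n-1}\gamma_k$, starting from $\gamma_0=1$ --- so that the exponents $(\gamma-1)p_i'$ produced on the right can be interpolated against $\|u\|_{L^1}$ and the left–hand exponent; passing from product to sum via $\prod_i a_i^{1/n}\le\tfrac1n\sum_i a_i$ and reabsorbing, by Young's inequality and the finiteness of $|E|$, the critical norm $\|u\|_{L^{\overline p^*}}$ that reappears on the right, one arrives at $\|u\|_{L^{\overline p^*}(E)}\le c\big(\sum_i\|u_{x_i}\|_{L^{p_i}(E)}+\|u\|_{L^1(E)}\big)$ when $\overline p\le n$; when $\overline p=n$ the iteration is stopped at an arbitrary finite $\overline p^*\in[1,\infty)$, which is exactly why $c$ then also depends on $\overline p^*$ and on $E$. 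When $\overline p>n$ the iteration can be pushed indefinitely: the exponents tend to $+\infty$ while the product of the iteration constants stays bounded (a Moser–type estimate), giving $\|u\|_{L^\infty(E)}\le c\big(\sum_i\|u_{x_i}\|_{L^{p_i}(E)}+\|u\|_{L^1(E)}\big)$; alternatively one invokes the anisotropic Morrey embedding directly.

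The main obstacle is the exponent bookkeeping in the last step: unlike the isotropic case, no single power $\gamma$ makes all the exponents $(\gamma-1)p_i'$ equal to $\overline p^*$, so the estimate must genuinely be iterated, and the interpolation of the intermediate norms $\|u\|_{L^{(\gamma-1)p_i'}}$ between $L^1$ and $L^{\overline p^*}$ has to be organized so that the geometric growth of the exponents is compensated and the accumulated constants converge --- all while carrying along the genuinely non–homogeneous zero–order term $\|u\|_{L^1}$, which is precisely the feature distinguishing this statement from Lemma~\ref{anisopoincare}. For the reader's convenience one may instead simply refer to \cite{Acerbi, feopass, troisi}.
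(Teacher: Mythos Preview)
The paper does not prove this lemma at all: it is stated in the preliminaries section as a known embedding for anisotropic Sobolev spaces and simply referred to the literature \cite{Acerbi, feopass, troisi}. Your sketch is precisely the classical Troisi slicing/iteration argument underlying those references, so there is nothing to compare --- your final sentence (``one may instead simply refer to \cite{Acerbi, feopass, troisi}'') is in fact exactly what the paper does.
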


\section{A regularized problem}\label{PreREG}
In this section we prove a regularity result for more regular problems with respect to \eqref{functional} that will be needed in the approximation procedure. More precisely, for $\varepsilon \geq 0$ we consider 
\begin{equation}\label{Eqconepsilon}
 f_\varepsilon(x,u, \xi)= \sum_{i=1}^{n} \, \dfrac{b_i(x)}{p_i} \lvert \xi_i \rvert^{p_i} + \varepsilon(1+|\xi|^\frac{p_n}{2})^2  -\overline{\omega}(x)u
\end{equation}
with
$b_i(x):\Omega \to [0,+\infty), \, i=1, \dots, n$, non-negative, Lipschitz continuous coefficients,
i.e., the following conditions hold
\begin{equation}\label{atilde}
L_i= \sup_{x,y \in \Omega, x \neq y} \frac{|b_i(x)-b_i(y)|}{|x-y|}< \infty , \quad i=1,..,n.
\end{equation}
Setting $\tilde{f}(x, \xi)= \sum_{i=1}^{n} \, b_i(x) \lvert \xi_i \rvert^{p_i},$ one can easily check that
\begin{equation}\label{A'4}
    |D_\xi  \tilde{f}(x,\xi)-D_\xi \tilde{f}(y, \xi)| \leq K |x-y|\, \, \sum_{i=1}^{n} |\xi_i|^{p_i-1} \tag{A3'}
\end{equation}
\noindent for a.e.\ $x,y \in \Omega $ and every $\xi \in \mathbb{R}^{ n}$, where $K=K(L_i, p_i)$
.\\
We recall a Lipschitz regularity Theorem for the minimizers of the functional 
\begin{equation}\label{functional2}
    \mathcal{F}_\varepsilon (x, Dv) =\int_\Omega f_\varepsilon(x,v, Dv) \, dx,
\end{equation}
that can be deduced by  \cite[Theorem $8.9$]{giusti} in the case $p=p_n$, where $f_\varepsilon$ was defined at \eqref{Eqconepsilon}. 
\begin{thm}\label{thm2Mascolo}
    Let $\tilde{f} $ satisfy \eqref{A2}, \eqref{A3} and \eqref{A'4} and $\overline{\omega} \in L^\infty(\Omega)$.
    Let $u_\varepsilon \in W^{1,p_n}_{\mathrm{loc}}(\Omega)$ be a local minimizer of \eqref{functional2}.
Then, $u_\varepsilon \in W^{1,\infty}_{\mathrm{loc}}(\Omega)$ 
\end{thm}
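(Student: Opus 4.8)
The plan is to reduce Theorem~\ref{thm2Mascolo} to the scalar regularity result \cite[Theorem~8.9]{giusti} by checking that, for $\varepsilon>0$, the integrand $f_\varepsilon$ of \eqref{Eqconepsilon} satisfies the structural hypotheses of that theorem with the \emph{single} exponent $p=p_n$. Recall that \eqref{A2}, \eqref{A3}, \eqref{A'4} are assumed for $\tilde f$, and they transfer to $f_\varepsilon$ up to the harmless constants $1/p_i$, the addition of the convex $\varepsilon$-term, and the linear term $-\overline\omega u$. Writing
\[ f_\varepsilon(x,u,\xi)=\sum_{i=1}^n\frac{b_i(x)}{p_i}|\xi_i|^{p_i}+\varepsilon\,(1+|\xi|^{p_n/2})^2-\overline\omega(x)\,u, \]
the orthotropic part is widely degenerate (the $b_i$ may vanish) and is far from being $p_n$-elliptic; the whole point is that the perturbation $\varepsilon(1+|\xi|^{p_n/2})^2$ is exactly what restores a genuine $p_n$-growth structure. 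It gives the $p_n$-coercivity $f_\varepsilon(x,u,\xi)\ge\varepsilon|\xi|^{p_n}-\|\overline\omega\|_{L^\infty}|u|$, and, since differentiating it produces (besides an always non-negative $(p_n/2)$-contribution) the monotone field $\varepsilon p_n|\xi|^{p_n-2}\xi$, which satisfies the standard ellipticity estimate $\langle|\xi|^{p_n-2}\xi-|\eta|^{p_n-2}\eta,\xi-\eta\rangle\ge c(|\xi|^2+|\eta|^2)^{(p_n-2)/2}|\xi-\eta|^2$ (cf.\ Lemma~\ref{Duzaar}), it also supplies, together with \eqref{A2}, the (degenerate) $p_n$-ellipticity from below with ellipticity constant comparable to $\varepsilon$.

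Concretely I would verify, using only $2\le p_i\le p_n$, the bounds $0\le b_i\le\Lambda$, the Lipschitz constants \eqref{atilde} and $\overline\omega\in L^\infty$: (i) the $p_n$-growth from above, $0\le f_\varepsilon(x,u,\xi)+\|\overline\omega\|_{L^\infty}|u|\le c(1+|\xi|^{p_n})$, where $p_i\le p_n$ is essential for bounding $\sum_i\frac{b_i(x)}{p_i}|\xi_i|^{p_i}$ by $c(1+|\xi|^{p_n})$; (ii) the convexity of $f_\varepsilon$ in $\xi$, the $(p_n-1)$-growth of the gradient $|D_\xi f_\varepsilon(x,u,\xi)|\le c(1+|\xi|^{p_n-1})$, and the $p_n$-ellipticity from below just described; (iii) the Lipschitz dependence on $x$: by \eqref{A'4} (which in turn follows from \eqref{atilde}), $|D_\xi f_\varepsilon(x,u,\xi)-D_\xi f_\varepsilon(y,u,\xi)|\le K|x-y|\sum_i|\xi_i|^{p_i-1}\le c\,|x-y|(1+|\xi|^{p_n-1})$; (iv) the lower-order term: $f_\varepsilon$ is affine in $u$ with $|\partial_u f_\varepsilon|=|\overline\omega(x)|\le\|\overline\omega\|_{L^\infty}$, which is the controlled-growth condition on the $u$-dependence required by \cite[Theorem~8.9]{giusti}. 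With (i)--(iv) in hand, that theorem applied with $p=p_n$ gives the interior Lipschitz regularity of $u_\varepsilon$, i.e.\ $u_\varepsilon\in W^{1,\infty}_{\mathrm{loc}}(\Omega)$.

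The point requiring a little care is matching the precise regularity assumptions of the quoted theorem: the coefficients $b_i$ are merely Lipschitz, so $f_\varepsilon$ is only $C^{1,1}$ — not $C^2$ — in $x$ (and, in the range $2\le p_n<4$, the lower-order term $|\xi|^{p_n/2}$ prevents full $C^2$-smoothness in $\xi$ at the origin as well). If the cited statement is phrased for a smoother integrand, I would interpose one more approximation, replacing $b_i$ by the mollifications $b_i*\phi_\delta$ — which are smooth, still non-negative, satisfy $|D(b_i*\phi_\delta)|\le L_i$, and converge to $b_i$ locally uniformly — and, if necessary, also non-degenerating the $|\xi|^{p_n/2}$ term in the usual way; \cite[Theorem~8.9]{giusti} then yields interior Lipschitz bounds for the corresponding minimizers that are uniform in the regularization parameters (they depend only on $n$, $p_n$, $\Lambda$, the $L_i$, $\|\overline\omega\|_{L^\infty}$, $\varepsilon$ and a local $L^{p_n}$-norm of the minimizer), and a passage to the limit, using the stability of minimizers under these perturbations, transfers the bound to $u_\varepsilon$. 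I expect this regularization bookkeeping, rather than the estimates (i)--(iv), to be the only genuine obstacle. Finally, note that the resulting Lipschitz norm degenerates as $\varepsilon\to0$, so Theorem~\ref{thm2Mascolo} is purely a qualitative starting point for the difference-quotient argument; the estimates uniform in $\varepsilon$ are the subject of Section~\ref{Apriorisec}.
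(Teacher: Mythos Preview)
Your proposal is correct and matches the paper's approach exactly: the paper does not give a proof of Theorem~\ref{thm2Mascolo} but simply states that it ``can be deduced by \cite[Theorem~8.9]{giusti} in the case $p=p_n$'', which is precisely the reduction you carry out. Your verification of the structural hypotheses (i)--(iv) and your remark on the possible need for an extra smoothing step are more detailed than anything the paper provides.
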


We shall use the higher differentiability of the minimizers $u_\varepsilon$ of $\mathcal{F}_\varepsilon$ that, as far  we know, is not available in literature and that is contained in the following
{
\begin{lem}\label{LemmaInduzione}
  Let $\tilde{f}$ satisfy \eqref{A2}, \eqref{A3} and \eqref{A'4} for exponents $ p_i \geq 2,  \forall i=1,\dots,n$, and assume {$\overline{\omega} \in L^\infty(\Omega) \cap W^{1,\frac{\mathbf{p}+2}{\mathbf{p}+1}}_{\mathrm{loc}}(\Omega)$. }
    Let  $u_\varepsilon \in W^{1,p_n}_{\mathrm{loc}}(\Omega) $ be a local minimizer of the functional \eqref{functional2}.
    Then, 
    we have  $${V_{p_i}((u_\varepsilon)_{x_i})} \in W^{1,2}_{\mathrm{loc}}(\Omega), \quad \forall i =1,\dots, n.   $$
   
\end{lem}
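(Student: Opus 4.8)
\textbf{Proof strategy for Lemma \ref{LemmaInduzione}.}
The plan is to establish the higher differentiability $V_{p_i}((u_\varepsilon)_{x_i}) \in W^{1,2}_{\mathrm{loc}}(\Omega)$ by the classical difference quotient method, combined with an inductive argument that improves the integrability of $Du_\varepsilon$ step by step. Since Theorem \ref{thm2Mascolo} already guarantees $u_\varepsilon \in W^{1,\infty}_{\mathrm{loc}}(\Omega)$, all manipulations below are legitimate: the gradient is locally bounded, so every power of $|(u_\varepsilon)_{x_i}|$ is locally integrable and the Euler--Lagrange equation
\[
\int_\Omega \sum_{i=1}^n b_i(x)\,|(u_\varepsilon)_{x_i}|^{p_i-2}(u_\varepsilon)_{x_i}\,\varphi_{x_i}\,dx
+ 2\varepsilon\int_\Omega \bigl(1+|Du_\varepsilon|^{\frac{p_n}{2}}\bigr)|Du_\varepsilon|^{\frac{p_n}{2}-2}\langle Du_\varepsilon, D\varphi\rangle\,dx
= \int_\Omega \overline{\omega}\,\varphi\,dx
\]
holds for all $\varphi \in W^{1,p_n}_0$ with compact support. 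The first step is to test this identity with $\varphi = \tau_{s,-h}\bigl(\eta^2\,\tau_{s,h}u_\varepsilon\bigr)$, where $\eta \in C_0^\infty$ is a cutoff between two concentric balls $B_\rho \subset B_R \Subset \Omega$. After the usual integration by parts for difference quotients (Proposition \ref{rapportoincrementale}) one arrives at an inequality of the form
\[
\int_{B_\rho}\eta^2\sum_{i=1}^n \tau_{s,h}\bigl(b_i\,|(u_\varepsilon)_{x_i}|^{p_i-2}(u_\varepsilon)_{x_i}\bigr)\,\tau_{s,h}(u_\varepsilon)_{x_i}\,dx + \varepsilon(\cdots) \le (\text{cutoff terms}) + (\text{terms from }\overline{\omega}).
\]

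The second step is to split the left-hand side. Writing $\tau_{s,h}(b_i\,a(\xi)) = b_i(x+he_s)\,\tau_{s,h}(a(Du_\varepsilon)) + \tau_{s,h}b_i\,a(Du_\varepsilon)$ via Proposition \ref{rapportoincrementale}(iii), the leading term is controlled from below using assumption \eqref{A2} together with Lemma \ref{D1}, producing
\[
l\int_{B_\rho}\eta^2\sum_{i=1}^n \frac{|\tau_{s,h}V_{p_i}((u_\varepsilon)_{x_i})|^2}{|h|^2}\,dx
\]
up to constants. The coefficient-difference term is estimated by $\eqref{A'4}$-type bounds, i.e.\ by $K|h|\sum_i |(u_\varepsilon)_{x_i}|^{p_i-1}$ times $|\tau_{s,h}(u_\varepsilon)_{x_i}|$, and since $Du_\varepsilon \in L^\infty_{\mathrm{loc}}$ and $D(u_\varepsilon)_{x_i} = \tau_{s,h}(u_\varepsilon)_{x_i}/|h|$ is controlled after another application of the iteration, this term is absorbed. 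The forcing term is handled by moving the difference quotient onto $\overline{\omega}$: since $\overline{\omega} \in W^{1,\frac{\mathbf{p}+2}{\mathbf{p}+1}}_{\mathrm{loc}}$, one gets $\int \overline{\omega}\,\tau_{s,-h}(\eta^2\tau_{s,h}u_\varepsilon)\,dx = \int \tau_{s,h}\overline{\omega}\cdot \eta^2\tau_{s,h}u_\varepsilon\,dx$ and uses Lemma \ref{ldiff} to trade $\tau_{s,h}\overline{\omega}$ for $|h|\,\omega_{x_s}$ measured in $L^{\frac{p_s+2}{p_s+1}}$, balanced against $\tau_{s,h}u_\varepsilon \in L^{p_s+2}$; the latter integrability is exactly what Proposition \ref{LemmaBrasco} supplies once we know $|(u_\varepsilon)_{x_s x_s}|^2|(u_\varepsilon)_{x_s}|^{p_s-2} \in L^1_{\mathrm{loc}}$, which is the quantity we are bootstrapping. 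Using Lemma \ref{lm3} to reabsorb the terms with $\eta$-derivatives and letting $h \to 0$ (Lemma \ref{Lemmahzero}), we conclude $\tau_{s,h}V_{p_i}((u_\varepsilon)_{x_i})/|h|$ is bounded in $L^2_{\mathrm{loc}}$ uniformly, hence $V_{p_i}((u_\varepsilon)_{x_i}) \in W^{1,2}_{\mathrm{loc}}(\Omega)$ for every $i$.

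The delicate point — and the reason an inductive scheme is needed rather than a one-shot estimate — is that the forcing term and the coefficient term on the right-hand side involve $\tau_{s,h}u_\varepsilon$ (equivalently $(u_\varepsilon)_{x_s}$) in norms like $L^{p_s+2}$, which a priori we only know to be finite because $Du_\varepsilon \in L^\infty_{\mathrm{loc}}$ but whose quantitative control we wish to tie to the second-derivative energy $\int |(u_\varepsilon)_{x_ix_i}|^2|(u_\varepsilon)_{x_i}|^{p_i-2}$. The natural resolution is to argue by induction on a parameter (as in \cite{russo, BraTau}): one first obtains the estimate with $\overline{\omega}$-terms sitting at a lower integrability exponent, uses Proposition \ref{LemmaBrasco} to convert the gained second-order information into higher integrability of $(u_\varepsilon)_{x_i}$, and feeds this back to close the estimate at the exponent $p_i+2$. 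Because the coefficients $b_i$ are genuinely only Lipschitz (not smoother), we cannot differentiate the equation directly and must keep everything at the level of difference quotients; but since $L_i < \infty$ the coefficient term behaves like the model case and no loss occurs. I expect the main obstacle to be the careful tracking of the mixed terms $\tau_{s,h}b_i \cdot |(u_\varepsilon)_{x_i}|^{p_i-2}(u_\varepsilon)_{x_i}\cdot\tau_{s,h}(u_\varepsilon)_{x_i}$ and $\tau_{s,h}\overline{\omega}\cdot\tau_{s,h}u_\varepsilon$, ensuring via Young's inequality that the factors carrying $\tau_{s,h}(u_\varepsilon)_{x_i}/|h|$ are absorbed into the coercive left-hand side while the remaining factors are controlled purely by the data $\Vert Du_\varepsilon\Vert_\infty$, $\Vert\omega_{x_i}\Vert_{L^{\frac{p_i+2}{p_i+1}}}$ and $K$, uniformly in $h$.
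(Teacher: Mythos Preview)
Your approach is essentially the paper's: test the Euler--Lagrange system with $\varphi=\tau_{s,-h}(\eta^2\tau_{s,h}u_\varepsilon)$, split via Proposition~\ref{rapportoincrementale}(iii), use \eqref{A2} and Lemma~\ref{D1} for the coercive lower bound, and control coefficient and forcing terms by \eqref{A'4} and Lemma~\ref{ldiff}. The difference is that you introduce an inductive bootstrap (via Proposition~\ref{LemmaBrasco}) and an iteration (Lemma~\ref{lm3}) that the paper does not need and does not use. The lemma asks only for the \emph{qualitative} statement $V_{p_i}((u_\varepsilon)_{x_i})\in W^{1,2}_{\mathrm{loc}}$; the paper therefore allows every constant to depend on $\|Du_\varepsilon\|_{L^\infty(B_R)}$, which is finite by Theorem~\ref{thm2Mascolo}. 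With this license, all powers $|(u_\varepsilon)_{x_i}|^{p_i-1}$, $|(u_\varepsilon)_{x_i}|^{p_i}$, and the $L^{p_j+2}$ norm of $(u_\varepsilon)_{x_j}$ appearing on the right are bounded directly, the cross terms are absorbed by a single Young-inequality parameter $\beta=l/4$, and Lemma~\ref{Lemmahzero} finishes. Your ``delicate point'' is not delicate here: the quantitative independence from $\|Du_\varepsilon\|_\infty$ is the content of Theorem~\ref{AppThm}, not of this lemma, and there the required higher integrability is taken as the \emph{hypothesis} \eqref{Apriori}, supplied precisely by the present lemma. So your plan works but carries redundant machinery; the paper's proof is a one-pass estimate with constants depending on $\|Du_\varepsilon\|_\infty$.
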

\begin{proof}
We start by observing that $u_\varepsilon$
 solves the following Euler-Lagrange system
{\begin{equation}\label{System}
    \int_{\Omega} \langle f_{\xi} (x, D u_\varepsilon(x))+ \varepsilon(1+ |D u_\varepsilon(x)|^2)^{\frac{p_n-2}{2}}D u_\varepsilon(x), D \varphi(x) \rangle \, dx = \int_{\Omega}\overline{\omega}(x)\varphi(x) \, dx , 
\end{equation}}
for every $ \varphi \in W^{1,p_n}_{0}(\Omega)$ and that by Theorem \ref{thm2Mascolo} $u_\varepsilon \in W_{\mathrm{loc}}^{1,\infty}(\Omega)$.

Fix a ball $B_R \Subset \Omega$ and consider radii $0<\rho <t< R$, a cut-off function $\eta \in C_0^{\infty}(B_t)$, with $\eta=1$ on $B_{\rho}$, $0 \leq \eta \leq 1$, $|D \eta | \leq \frac{c}{t-\rho}$  and $|h|\leq \frac{R - t}{2}$. We test \eqref{System} with the function
$$\varphi = \tau_{j, -h} \left(  \eta^2 \tau_{j,h} u_\varepsilon \right) $$
thus obtaining
\begin{align*}
    &\int_{\Omega} \langle f_{\xi} (x, D u_\varepsilon(x))+ \varepsilon(1+ |D u_\varepsilon(x)|^2)^{\frac{p_n-2}{2}}D u_\varepsilon(x), \tau_{j,-h} D  \left(  \eta^2 \tau_{j,h} u_\varepsilon(x) \right) \rangle \, dx \\
    &=  \int_{\Omega} \overline{\omega}(x)\tau_{j,-h}\left(\eta^2 \tau_{j,h} u_\varepsilon(x) \right) \, dx \, ,
\end{align*}
and hence, by $(ii)$ of Proposition \ref{rapportoincrementale},
\begin{align}\label{IntEq}
    &\int_{\Omega} \langle \tau_{j,h} f_{\xi} (x, D u_\varepsilon(x))+ \varepsilon\tau_{j,h}(1+ |D u_\varepsilon(x)|^2)^{\frac{p_n-2}{2}}D u_\varepsilon(x), D  \left(  \eta^2 \tau_{j,h} u_\varepsilon(x) \right) \rangle \, dx \notag\\
    &= {\int_{\Omega} \tau_{j,h}\overline{\omega}(x)\eta^2 \tau_{j,h} u_\varepsilon(x)  \, dx \,  .}
\end{align}
Since
$$D(\eta^2 \tau_{j,h} (u_\varepsilon)) = 2 \eta D \eta \tau_{j,h} (u_\varepsilon) + \eta^2 \tau_{j,h} D u_\varepsilon$$
and
\begin{align*}
&\tau_{j,h} f_{\xi} (x, D u_\varepsilon(x))+ \varepsilon\tau_{j,h} {\left((1+ |D u_\varepsilon(x)|^2)^{\frac{p_n-2}{2}}D u_\varepsilon(x) \right)} \\
&=   f_\xi (x+e_jh, D u_\varepsilon(x+e_jh))- f_\xi (x+e_jh, D u_\varepsilon(x)) \\
& \qquad + f_\xi (x+e_jh, D u_\varepsilon(x))- f_\xi (x, D u_\varepsilon(x))\\
&  \qquad + \varepsilon {\tau_{j, h} \left((1+ |D u_\varepsilon(x)|^2)^\frac{p_n-2}{2}D u_\varepsilon(x) \right)},
\end{align*}
we  may rewrite equality \eqref{IntEq} as follows
\begin{align}\label{s}
 0=\int_{\Omega} &\langle f_{\xi} (x+e_jh, D u_\varepsilon(x+e_jh))- f_{\xi} (x+e_jh, D u_\varepsilon(x)) , \eta^2  \tau_{j,h} D u_\varepsilon(x) \rangle \, dx \notag \\
    &+ 2 \int_{\Omega} \langle f_{\xi} (x+e_jh, D u_\varepsilon(x+e_jh))- f_{\xi} (x+e_jh, D u_\varepsilon(x)) ,  \eta D \eta \tau_{j,h} u_\varepsilon(x) \rangle \, dx \notag \\
    &+ 2 \int_{\Omega} \langle f_{\xi} (x+e_jh, D u_\varepsilon(x))- f_{\xi} (x, D u_\varepsilon(x)) ,  \eta D \eta \tau_{j,h} u_\varepsilon(x) \rangle \, dx \notag \\
    &+  \int_{\Omega} \langle f_{\xi} (x+e_jh, D u_\varepsilon(x))- f_{\xi} (x, D u_\varepsilon(x)) , \eta^2  \tau_{j,h} D u_\varepsilon(x) \rangle \, dx \notag \\
    &+ 2 \varepsilon  \int_{\Omega} \langle {\tau_{j, h} \left((1+ |D u_\varepsilon(x)|^2)^\frac{p_n-2}{2}D u_\varepsilon(x) \right)},  \eta D \eta \tau_{j,h} u_\varepsilon(x) \rangle \, dx  \notag \\
     &+ \varepsilon  \int_{\Omega} \langle {\tau_{j, h} \left((1+ |D u_\varepsilon(x)|^2)^\frac{p_n-2}{2}D u_\varepsilon(x) \right)},  \eta^2  \tau_{j,h} D u_\varepsilon(x) \rangle \, dx \notag \\
&-
     {\int_{\Omega} \tau_{j,h}\overline{\omega}(x)\eta^2 \tau_{j,h} u_\varepsilon(x)  \, dx} \notag\\
    & =: J_1 + J_2 + J_3 + J_4 + J_5+ J_6+J_7. \notag
\end{align}
Therefore
\begin{equation}\label{SommaJ}
   J_1 +J_6 \leq |J_2|  + |J_3| + |J_4| +|J_5|+|J_7|  .
\end{equation}
 From \eqref{A2}, we infer
\begin{align}\label{J_1}
    J_1 &=  \int_{\Omega} \langle f_{\xi} (x+e_j h, D u_\varepsilon(x+ e_jh))- f_{\xi} (x+ e_j h, D u_\varepsilon(x)) , \eta^2  \tau_{j,h} D u_\varepsilon (x)\rangle \, dx \notag \\
    & \geq l \int_{\Omega} \eta^2 \sum_{i=1}^{n}\, \left( \, |(u_\varepsilon)_{x_i}(x+e_jh)|^2+|(u_\varepsilon)_{x_i}(x) |^2 \, \right)^\frac{p_i -2}{2}| \tau_{j,h} (u_\varepsilon)_{x_i}(x) |^2 dx =: l \textbf{\textbf{RHS}}
\end{align}
The integral $J_6$ can be estimated by using Lemma \ref{Fusco} with $\delta= p_n-2$, as follows
\begin{equation}\label{J6}
    J_6 \geq \frac{\varepsilon}{c} \int_{\Omega} \eta^2 (1+ |D u_\varepsilon(x+ he_j)|^2 + |D u_\varepsilon(x)|^2)^\frac{p_n-2}{2} |\tau_{j, h} D u_\varepsilon(x)|^2 =: \frac{\varepsilon}{c} \overline{\textbf{\textbf{RHS}}},
\end{equation}
where $c=c(p_n)$ is the constant appearing in Lemma \ref{Fusco}.\\
Now we consider the term $|J_2|$. By hypotesis \eqref{A3} and by  Young's  inequality, we get
\begin{align}\label{J_2}
    |J_2| &= 2\left|   \int_{\Omega} \langle f_{\xi} (x+e_jh, D u_\varepsilon(x+e_jh))- f_{\xi} (x+e_jh, D u_\varepsilon(x)) ,  \eta \,   \eta_{x_i} \tau_{j,h} u_\varepsilon \rangle \, dx  \right| \notag \\
    & \leq 2 c \int_{\Omega} \eta \sum_{i=1}^{n}\, \left( \, |(u_\varepsilon)_{x_i}(x+e_jh)|^2+|(u_\varepsilon)_{x_i}(x) |^2 \, \right)^\frac{p_i -2}{2}| \tau_{j,h} (u_\varepsilon)_{x_i}| | \tau_{j,h} u_\varepsilon ||\eta_{x_i}| dx  \notag \\
& \leq \beta \underbrace{\int_{\Omega}  \eta^2 \sum_{i=1}^{n}\, \left( \, |(u_\varepsilon)_{x_i}(x)(x+e_jh)|^2+|(u_\varepsilon)_{x_i}(x) |^2 \, \right)^\frac{p_i -2}{2}| \tau_{j,h} (u_\varepsilon)_{x_i} |^2 dx}_{\textbf{\textbf{RHS}}}  \notag \\
& \qquad + c_\beta \int_{\Omega} \sum_{i=1}^{n}\, \left( \, |(u_\varepsilon)_{x_i}(x+e_jh)|^2+|(u_\varepsilon)_{x_i}(x) |^2 \, \right)^\frac{p_i -2}{2}  | \tau_{j,h} u_\varepsilon |^2 \, |\eta_{x_i}|^2 dx  \notag \\
& \leq  \beta \,\textbf{\textbf{RHS}} \,  + \frac{c_\beta(\Vert Du_\varepsilon \Vert_\infty)}{(t-\rho)^2}  \int_{B_t}   | \tau_{j,h} u_\varepsilon |^2 dx \notag \\
& \leq \beta \,\textbf{\textbf{RHS}} \,  + \frac{c_\beta(R, \Vert Du_\varepsilon \Vert_\infty)}{(t-\rho)^2} |h|^2\int_{B_{R}}   \, |(u_\varepsilon)_{x_j}(x)|^{2}dx \notag\\
& \leq  \beta \,{\textbf{RHS}} \,  + \frac{c_\beta(R, \Vert Du_\varepsilon \Vert_\infty)}{(t-\rho)^2} |h|^2,
\end{align}
where to estimate the last integral in the right-hand side  we used Lemma \ref{ldiff} and that $Du_\varepsilon$ is locally bounded.

Now, we turn our attention to $|J_3|$. From condition \eqref{A'4} and the properties of $\eta$, we get
\begin{align}\label{J_3}
    |J_3|&= \left| \int_{\Omega} \langle f_{\xi} (x+e_j h, D u_\varepsilon(x))- f_{\xi} (x, D u_\varepsilon(x)) , 2 \eta D \eta \tau_{j,h} u_\varepsilon \rangle \, dx  \right| \notag \\
    & \leq |h|\, K\,  \sum_{i=1}^n \int_{\Omega} 2\eta | \eta_{x_i}||\tau_{j, h} u_\varepsilon| \,|(u_\varepsilon)_{x_i}|^{p_i-1} dx \notag \\
   & \leq |h| \frac{c(K)}{t-\rho}\, \sum_{i=1}^n  \, \left(   \int_{B_t}  |\tau_{j,h}u_\varepsilon|\,|(u_\varepsilon)_{x_i}|^{p_i-1} dx \right) \notag \\
   & \leq |h|^2 \frac{c(K, R)}{t-\rho}\,  \sum_{i=1}^{n} \lvert \lvert  (u_\varepsilon)_{x_i} \rvert \rvert_{L^\infty(B_R)}^{p_i-1} \left(\int_{B_{R}}   \, |(u_\varepsilon)_{x_j}(x)|^{p_j}dx \right)^{\frac{1}{p_j}} \notag\\
   & \leq |h|^2 \frac{c(K, R, \Vert Du_\varepsilon \Vert_\infty)}{t-\rho},
\end{align}
where, as before, we used Theorem \ref{thm2Mascolo}, H\"older's inequality and  Lemma \ref{ldiff}.\\
Similarly as for the estimate of $J_3$, from condition \eqref{A'4} and the properties of $\eta$, we obtain
\begin{align}\label{J_4}
     |J_4| &= \left|  \int_{\Omega} \langle f_{\xi} (x+e_jh, D u_\varepsilon(x))- f_{\xi} (x, D u_\varepsilon(x)) , \eta^2  \tau_{j,h} D u_\varepsilon \rangle \, dx \right| \notag \\
     &= \left|  \int_{\Omega} \sum_{i=1}^{n} \eta^2 ( b_i(x+e_jh)- b_i(x) )|(u_\varepsilon)_{x_i}|^{p_i -1} \, \tau_{j, h}(u_\varepsilon)_{x_i} dx\right| \notag \\
    & \leq c |h|\, K\, \sum_{i=1}^{n}  \int_{\Omega} \eta^2 |\tau_{j, h}(u_\varepsilon)_{x_i}| \, |(u_\varepsilon)_{x_i}|^{p_i -1} dx \notag \\
    & \leq \beta \underbrace{\int_{\Omega} \eta^2 \sum_{i=1}^{n}\, \left( \, |(u_\varepsilon)_{x_i}(x+e_j h)|^2+|(u_\varepsilon)_{x_i}(x) |^2 \, \right)^\frac{p_i -2}{2}| \tau_{j,h} (u_\varepsilon)_{x_i} |^2 dx}_{\textbf{\textbf{RHS}}}  \notag \\
    & \qquad + c_{\beta} |h|^2 \, \sum_{i=1}^{n}  \int_{B_t} \eta^2 |(u_\varepsilon)_{x_i}|^{p_i} \,  dx \notag \\
     & \leq  \beta \textbf{\textbf{RHS}} + c_{\beta} |h|^{2} \,\sum_{i=1}^{n}   \int_{B_t}  |(u_\varepsilon)_{x_i}|^{p_i} \,  dx  \notag\\
     & \leq  \beta \textbf{\textbf{RHS}} + c_{\beta}(R, \Vert Du_\varepsilon \Vert_\infty) |h|^{2}.
\end{align}
Now, we take care of $|J_5|$. From Lemma \ref{D1}, Young's  inequality with exponents $(2,2)$  and the properties of $\eta$, we get
\begin{align}\label{J5}
|J_5| &\leq 2 \varepsilon  \int_{\Omega} |\tau_{j, h}((1+ |D u_\varepsilon(x)|^2)^\frac{p_n-2}{2} D u_\varepsilon(x))| \,  |\eta| |D \eta| |\tau_{j,h} u_\varepsilon|  \, dx \notag \\ 
&\leq c \varepsilon \int_{\Omega} (1+ |D u_\varepsilon(x+ he_j)|^2 + |D u_\varepsilon(x)|^2)^\frac{p_n-2}{2} |\tau_{j, h} D u_\varepsilon(x)| \,  \eta |D \eta| |\tau_{j,h} u_\varepsilon|  \, dx \notag \\
&\leq \frac{\varepsilon}{2c} \underbrace{   \int_{\Omega} \eta^2 (1+ |D u_\varepsilon(x+ he_j)|^2 + |D u_\varepsilon(x)|^2)^\frac{p_n-2}{2} |\tau_{j, h} D u_\varepsilon(x)|^2   \, dx }_{ \overline{\textbf{\textbf{RHS}}}}\notag \\
& \qquad + \frac{c \varepsilon}{(t-\rho)^2} \int_{B_t} (1+ |D u_\varepsilon(x+ he_j)|^2 + |D u_\varepsilon(x)|^2)^\frac{p_n-2}{2}   |\tau_{j,h} u_\varepsilon|^2  \, dx \notag \\ 
& \leq \frac{\varepsilon}{2c}\overline{\textbf{\textbf{RHS}}}\ +  \frac{c(\Vert Du_\varepsilon \Vert_\infty) \varepsilon}{(t-\rho)^2}   \int_{B_t}  |\tau_{j,h} u_\varepsilon|^{2} dx  \, \notag \\
& \leq  \frac{\varepsilon}{2c}\overline{\textbf{\textbf{RHS}}}\ +  \frac{c(\Vert Du_\varepsilon \Vert_\infty) \varepsilon}{(t-\rho)^2} |h|^2  \int_{B_R}  |D u_\varepsilon|^{2} dx  \notag\\
& \leq \frac{\varepsilon}{2c}\overline{\textbf{\textbf{RHS}}}\ +  \frac{c(R,\Vert Du_\varepsilon \Vert_\infty) \varepsilon}{(t-\rho)^2} |h|^2   ,
\end{align}
where we used that $Du_\varepsilon$ is locally bounded, by Theorem \ref{thm2Mascolo}, and  Lemma \ref{ldiff}.\\
From the assumption on $\omega$ and since $u_\varepsilon$ is locally Lipschitz continuous in $\Omega$, we may use H\"older's inequality and Lemma \ref{ldiff}, thus getting the following estimate for $|J_7|$
\begin{align}
    |J_7| \le & \int_{B_t}|\tau_{j,h}\omega| \, |\tau_{j,h}u_\varepsilon| \, dx \notag\\
    \le &  \left( \int_{B_t}|\tau_{j,h}\omega|^{\frac{p_j+2}{p_j+1}} \, dx\right)^\frac{p_j+1}{p_j+2} \left( \int_{B_t}|\tau_{j,h}u_\varepsilon|^{p_j+2} \, dx\right)^\frac{1}{p_j+2} \notag\\
    \le & \, c |h|^2 \left( \int_{B_R}|\omega_{x_j}|^{\frac{p_j+2}{p_j+1}} \, dx\right)^\frac{p_j+1}{p_j+2} \left( \int_{B_R}|(u_\varepsilon)_{x_j}|^{p_j+2} \, dx\right)^\frac{1}{p_j+2} \notag\\
    \le & \,  c |h|^2 \sum_{i=1}^n \left( \int_{B_R}|\omega_{x_i}|^{\frac{p_i+2}{p_i+1}} \, dx\right)^\frac{p_i+1}{p_i+2}
    , \label{j_7}
\end{align}
where we also used the properties of $\eta$.

Inserting \eqref{J_1}, \eqref{J6}, \eqref{J_2}, \eqref{J_3}, \eqref{J_4}, \eqref{J5} and \eqref{j_7}  in \eqref{SommaJ} 
and reabsorbing the term $\overline{\textbf{RHS}}$
, we obtain
\begin{align}\label{sum}
 l &\int_{\Omega} \eta^2 \sum_{i=1}^{n}\, \left( \, |(u_\varepsilon)_{x_i}(x+e_jh)|^2+|(u_\varepsilon)_{x_i}(x) |^2 \, \right)^\frac{p_i -2}{2}| \tau_{j,h} (u_\varepsilon)_{x_i} |^2 dx + \frac{\varepsilon}{2c} \overline{\textbf{\textbf{RHS}}} \notag \\
&  \leq  2 \beta \, \int_{\Omega}  \eta^2 \sum_{i=1}^{n}\, \left( \, |(u_\varepsilon)_{x_i}(x+e_jh)|^2+|(u_\varepsilon)_{x_i}(x) |^2 \, \right)^\frac{p_i -2}{2}| \tau_{j,h} (u_\varepsilon)_{x_i} |^2 dx \notag\\
& \qquad +   \frac{c_\beta}{(t-\rho)^2} |h|^2 +   
\frac{c(K)}{(t-\rho)}|h|^2 
+ c_{\beta} |h|^{2} 
 +  \frac{c \varepsilon}{(t-\rho)^2} |h|^2  \notag\\
& \qquad + c|h|^2 \sum_{i=1}^n \left( \int_{B_R}|\omega_{x_i}|^{\frac{p_i+2}{p_i+1}} \, dx\right)^\frac{p_i+1}{p_i+2}. 
\end{align}
Since $\overline{\textbf{RHS}}$ is non-negative, we obviously have
\begin{align}\label{RHS2}
      \frac{l}{2} &\int_{\Omega} \eta^2 \sum_{i=1}^{n}\, \left( \, |(u_\varepsilon)_{x_i}(x+e_jh)|^2+|(u_\varepsilon)_{x_i}(x) |^2 \, \right)^\frac{p_i -2}{2}| \tau_{j,h} (u_\varepsilon)_{x_i} |^2 dx \notag \\
 & \leq  \frac{l}{2}\int_{\Omega} \eta^2 \sum_{i=1}^{n}\, \left( \, |(u_\varepsilon)_{x_i}(x+e_jh)|^2+|(u_\varepsilon)_{x_i}(x) |^2 \, \right)^\frac{p_i -2}{2}| \tau_{j,h} (u_\varepsilon)_{x_i} |^2 dx + \frac{\varepsilon}{2c} \overline{\textbf{\textbf{RHS}}}.
\end{align}
Hence, from \eqref{sum} and \eqref{RHS2}, choosing $\beta = \frac{l}{4}$ and reabsorbing the first  term in the right-hand side  by the left-hand side, we get
\begin{align}\label{stimaUnioneJ}
   \frac{l}{2} &\int_{\Omega} \eta^2 \sum_{i=1}^{n}\, \left( \, |(u_\varepsilon)_{x_i}(x+e_jh)|^2+|(u_\varepsilon)_{x_i}(x) |^2 \, \right)^\frac{p_i -2}{2}| \tau_{j,h} (u_\varepsilon)_{x_i} |^2 dx \notag \\
& \leq   \frac{c_\beta}{(t-\rho)^2} |h|^2 +   
\frac{c(K)}{(t-\rho)}|h|^2 
+ c_{\beta} |h|^{2} 
 +  \frac{c \varepsilon}{(t-\rho)^2} |h|^2  \notag\\
& \qquad + c|h|^2 \sum_{i=1}^n \left( \int_{B_R}|\omega_{x_i}|^{\frac{p_i+2}{p_i+1}} \, dx\right)^\frac{p_i+1}{p_i+2} ,
\end{align}
with a constant $c=c(l,K, R)$.\\
Applying Lemma \ref{D1} in the left-hand side of \eqref{stimaUnioneJ} and using that $\eta=1$ on $B_\rho$, we get 

\begin{align}
 \sum_{i=1}^n \int_{B_{\rho}} |\tau_{j,h} V_{p_i} ((u_\varepsilon)_{x_i})|^2 dx & \leq \,|h|^2 \Biggl\{   \frac{c_\beta}{(t-\rho)^2}  +   
\frac{c(K)}{(t-\rho)} 
+ c_{\beta}  
 +  \frac{c \varepsilon}{(t-\rho)^2}   \notag\\
& \qquad +  \sum_{i=1}^n \left( \int_{B_R}|\omega_{x_i}|^{\frac{p_i+2}{p_i+1}} \, dx\right)^\frac{p_i+1}{p_i+2} \Biggr\}, \notag 
  \end{align}
 and so
\begin{align}
  &\sum_{i=1}^n \int_{B_{\rho}} |\tau_{j,h} V_{p_i} ((u_\varepsilon)_{x_i})|^2 dx \leq  c |h|^{2} \,\left[ 1+ \sum_{i=1}^{n}   \Vert \omega_{x_i} \Vert_{L^{\frac{p_i+2}{p_i+1}}(B_{R})}  \right], \label{StimaLemma}
\end{align}
for a positive constant $c = c(n,p_i,\rho, R, \lambda, K, \lvert \lvert Du \rvert \rvert_{\infty})$.
Then we have that 
 $$V_{p_i}((u_\varepsilon)_{x_i}) \in W^{1,2}_{\mathrm{loc}}(\Omega), \qquad \forall i=1, \dots, n,$$ 
 which implies in particular that $$|(u_\varepsilon)_{x_i x_j}|^2|(u_\varepsilon)_{x_i}|^{p_i-2} \in L^1_{\mathrm{loc}}(\Omega), \qquad \forall i=1, \dots, n \text{ and  }  \forall j=1, \dots, n. $$
\end{proof}

\section{A priori estimates}\label{Apriorisec}
The main aim of this section is to establish the following a priori estimate which is the main step in the proof of our main result.
\begin{thm}\label{AppThm}
Let $\omega \in W^{1,\frac{\mathbf{p}+2}{\mathbf{p}+1}}_{\mathrm{loc}}(\Omega)$ and let $u \in  W^{1, \mathbf{p}}_{\mathrm{loc}}(\Omega) \cap L^\infty_{\mathrm{loc}}(\Omega)$  be a local minimizer of \eqref{functional} under assumptions \eqref{A2}--\eqref{A4}, with exponents $p_i\geq 2, \forall i=1,\dots,n $, such that $p_n < p_1+2$,
and with a function $g \in L^{r}_{\mathrm{loc}}{(\Omega)}$, where $r$ satisfies the condition
\eqref{Ipotesir}.
If we assume that 
\begin{equation}\label{Apriori}
V_{p_i}(u_{x_i}) \in {W^{1,2}_{\mathrm{loc}}(\Omega)}, \qquad\forall i=1,\dots,n,
\end{equation}
then the following estimates
\begin{align}\label{uxistima}
    \sum_{i=1}^{n} \left(  \int_{B_{R/4}} |u_{x_i}|^{p_i +2 } dx \right) & \leq c  \left(1+ \sum_{i=1}^{n} \left(\Vert u_{x_i} \Vert_{L^{p_i}(B_R)} + \Vert \omega_{x_i} \Vert_{L^\frac{p_i+2}{p_i+1}(B_{R})} \right) + \Vert g \Vert_{L^r (B_{R})} \right)^\gamma 
\end{align}
and
\begin{align}
        \sum_{i=1}^{n}  \left( \int_{B_{R/4}} |u_{x_i x_j}|^2|u_{x_i}|^{p_i-2}dx \right) & \leq c  \left( 1+\sum_{i=1}^{n} \left(\Vert u_{x_i} \Vert_{L^{p_i}(B_R)} + \Vert \omega_{x_i} \Vert_{L^\frac{p_i+2}{p_i+1}(B_{R})} \right)+ \Vert g \Vert_{L^r (B_{R})} \right)^\gamma 
 \label{StimaTeo1}
\end{align}
hold for every pair of concentric balls $B_{R/4} \subset B_{R} \Subset \Omega$, where $c = c(n,p_i,\lambda, \Lambda,R, {\Vert u \Vert_{\infty}})$ and $\gamma= \gamma (n,p_i, {r})$ are positive constants.
\end{thm}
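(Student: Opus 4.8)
The plan is to run the difference-quotient machinery directly on the Euler--Lagrange system for the (assumed regular enough) minimizer $u$, testing with $\varphi = \tau_{j,-h}(\eta^2 \tau_{j,h} u)$ for a cut-off $\eta$ supported in a ball and equal to $1$ on a smaller concentric ball, exactly as in the proof of Lemma \ref{LemmaInduzione}, but now keeping careful track of the $g$-dependent terms (since the coefficients $a_i$ are only $W^{1,r}$, not Lipschitz) and, crucially, iterating the resulting Caccioppoli-type inequality to get a genuine \emph{a priori} bound rather than just qualitative differentiability. After the same splitting as in \eqref{s}, one isolates the coercive term $J_1 \geq l\,\mathbf{RHS}$ (via \eqref{A2}), and bounds $|J_2|, |J_3|, |J_4|$ using Young's inequality with a small parameter $\beta$ to reabsorb a fraction $\beta\,\mathbf{RHS}$ on the left. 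The new feature compared with the Lipschitz case is that in $J_3$ and $J_4$ the factor $g(x)+g(x+he_j)$ appears (from \eqref{A4}), so the right-hand side will contain terms like $\int_{B_t} g^2 |u_{x_i}|^{p_i}\,dx$ and $\int_{B_t}(g(x)+g(x+he_j))^2 |\tau_{j,h}u|^2 |D\eta|^2\,dx$; these are controlled by Hölder's inequality with exponent $r/2 > (p_n+2)/2$ together with $u \in L^\infty_{\mathrm{loc}}$ and the higher integrability $u_{x_i} \in L^{p_i+2}_{\mathrm{loc}}$ furnished by Proposition \ref{LemmaBrasco} (applied via \eqref{Apriori}). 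The term $J_7$ coming from $\omega$ is handled verbatim as in \eqref{j_7}.

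The second, and genuinely new, step is to close the estimate into a self-improving inequality. From the reabsorbed Caccioppoli inequality one gets, after dividing by $|h|^2$ and letting $h \to 0$ via Lemma \ref{Lemmahzero}, a bound of the form
\begin{align*}
\sum_{i=1}^n \int_{B_\rho} |D V_{p_i}(u_{x_i})|^2\,dx \leq \frac{c}{(t-\rho)^2}\Bigl(1 + \sum_{i=1}^n \int_{B_t} |u_{x_i}|^{p_i}\,dx + \sum_{i=1}^n \int_{B_t} g^2 |u_{x_i}|^{p_i}\,dx + \dots \Bigr),
\end{align*}
and then one invokes Proposition \ref{LemmaBrasco} to convert $\int_{B_t} |u_{x_i}|^{p_i+2}\,dx$ into a multiple of $\int_{B_R}|u_{x_ix_i}|^2|u_{x_i}|^{p_i-2}\,dx + (\text{lower order})$. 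The delicate point is that the $g^2|u_{x_i}|^{p_i}$ terms, after Hölder with $r/2$ and $(r/2)'$, produce a power $\bigl(\int_{B_t}|u_{x_i}|^{p_i (r/2)'}\,dx\bigr)^{1/(r/2)'}$, and one checks that condition \eqref{Ipotesir}, i.e. $r > p_n+2$, guarantees $p_i(r/2)' < p_i + 2$, so this is an \emph{interpolation} between $L^{p_i}$ and $L^{p_i+2}$; by Young's inequality the $L^{p_i+2}$-part comes with a small constant and is reabsorbed, while the $L^{p_i}$-part stays on the right. This is the step where the precise relation between $r$ and $p_n$ is used.

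Finally, one feeds everything into the iteration Lemma \ref{lm3} on the radii: the combined inequality has the shape $Z(\rho) \leq \theta Z(s) + A(s-\rho)^{-\alpha} + C$ with $Z(\rho) = \sum_i \int_{B_\rho}|u_{x_ix_i}|^2|u_{x_i}|^{p_i-2}\,dx$ (or the $L^{p_i+2}$-quantity), $\theta < 1$ after choosing the Young parameters small, and $A,C$ expressed through $\sum_i \Vert u_{x_i}\Vert_{L^{p_i}(B_R)}$, $\sum_i \Vert \omega_{x_i}\Vert_{L^{(p_i+2)/(p_i+1)}(B_R)}$ and $\Vert g\Vert_{L^r(B_R)}$. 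Lemma \ref{lm3} then yields \eqref{StimaTeo1} on $B_{R/4}$, and one more application of Proposition \ref{LemmaBrasco} upgrades it to \eqref{uxistima}; the exponent $\gamma$ records the powers accumulated through the Young/Hölder interpolations. I expect the main obstacle to be bookkeeping: making sure every occurrence of $g^2|u_{x_i}|^{p_i}$ and of the mixed $\tau_{j,h}u$ terms is split so that the $L^{p_i+2}$-content always carries a reabsorbable small constant, and that the homogeneities line up so that Lemma \ref{lm3} applies with $\theta<1$ uniformly — this is exactly where $p_n < p_1 + 2$ (needed so that $p_i - 2 < p_j$ in the cross terms $|u_{x_ix_j}|^2|u_{x_i}|^{p_i-2}$ can be estimated against $V_{p_j}(u_{x_j})$-quantities) enters.
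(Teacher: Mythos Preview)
Your proposal is correct and follows the same route as the paper: test with $\tau_{j,-h}(\eta^2\tau_{j,h}u)$, split into $I_1,\dots,I_5$, use \eqref{A2}--\eqref{A4} together with H\"older at exponents $r$ and $r/2$ on the $g$-terms (the condition $r>p_n+2$ is exactly what forces the resulting $|u_{x_i}|$-exponents below $p_i+2$, so Young's inequality produces a small multiple of $\int|u_{x_i}|^{p_i+2}$), convert these via Proposition~\ref{LemmaBrasco} into diagonal second-derivative integrals on a larger ball, and close with Lemma~\ref{lm3}. One clarification: the gap condition $p_n<p_1+2$ is not used on the cross second-derivative terms but earlier, in the $I_2$ estimate --- when $i>j$ one must control $\int_{B_t}|\tau_{j,h}u|^{p_i}\,dx$ by H\"older and Lemma~\ref{ldiff} as $c|h|^{p_i}\bigl(\int_{B_{t'}}|u_{x_j}|^{p_j+2}\,dx\bigr)^{p_i/(p_j+2)}$, and this requires precisely $p_i\le p_j+2$.
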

\begin{proof}
We start by observing that, thanks to assumption \eqref{Ipotesip}, Theorem \ref{ulimitatoCupini} and Lemma \ref{LemmaTroisi}, we have $u \in L^\infty_{\mathrm{loc}}(\Omega)$. Therefore, by virtue of the local boundedness of $u$ and of assumption \eqref{Apriori}, an application of Proposition \ref{LemmaBrasco} gives that
\begin{equation}
    u_{x_k} \in L^{p_k+2}_{\mathrm{loc}}(\Omega) \qquad \text{for every } k \in \mathbb{N}. \label{highint}
\end{equation}
Fix a ball $B_R \Subset \Omega$ and consider radii $\frac{R}{4}< \rho< s<t<t'<{\sigma}<R$, a cut-off function $\eta \in C_0^{\infty}(B_t)$, with $\eta=1$ on $B_{s}$, $0 \leq \eta \leq 1$, $|D \eta | \leq \frac{c}{t-s}$  and $|h|\leq \frac{t' - t}{2}$.
We test the Euler-Lagrange equation of \eqref{functional} with the function $$\varphi = \tau_{j,-h}(\eta^2 \tau_{j,h}u),$$ 
thus getting
\begin{align}\label{SommaIntera}
 0= \int_{\Omega} &\langle f_{\xi} (x+e_jh, Du(x+e_jh))- f_{\xi} (x+e_jh, Du(x)) , \eta^2  \tau_{j,h} Du \rangle \, dx \notag \\
    &+ \int_{\Omega} \langle f_{\xi} (x+e_jh, Du(x+e_jh))- f_{\xi} (x+e_jh, Du(x)) , 2 \eta D \eta \tau_{j,h} u \rangle \, dx \notag \\
    &+  \int_{\Omega} \langle f_{\xi} (x+e_jh, Du(x))- f_{\xi} (x, Du(x)) , 2 \eta D \eta \tau_{j,h} u \rangle \, dx \notag \\
    &+  \int_{\Omega} \langle f_{\xi} (x+e_jh, Du(x))- f_{\xi} (x, Du(x)) , \eta^2  \tau_{j,h} Du \rangle \, dx \notag \\
    &-  \int_{\Omega} \tau_{j,h}\omega(x)\eta^2 \tau_{j,h} u(x)  \, dx \notag\\
    & =: I_1 + I_2 + I_3 + I_4+I_5. \notag
\end{align}
From the previous equality, we deduce that 
\begin{equation}\label{SommaInt}
   I_1 \leq |I_2|  + |I_3| + |I_4|+|I_5| .
\end{equation}
By virtue of assumption \eqref{A2}, we infer
\begin{align}\label{I1}
    I_1 &=  \int_{\Omega} \langle f_{\xi} (x+e_j h, Du(x+ e_jh))- f_{\xi} (x+ e_j h, Du(x)) , \eta^2  \tau_{j,h} Du \rangle \, dx \notag \\
    & \geq l \int_{\Omega} \eta^2 \sum_{i=1}^{n}\, \left( \, |u_{x_i}(x+e_jh)|^2+|u_{x_i}(x) |^2 \, \right)^\frac{p_i -2}{2}| \tau_{j,h} u_{x_i} |^2 dx =: l  \textbf{\textbf{RHS}}.
\end{align}

Now we consider the term $|I_2|$. By using hypotesis \eqref{A3}, Young's  inequality, we get
\begin{align}\label{termtau}
    |I_2| &= \left|   \int_{\Omega} \langle f_{\xi} (x+e_jh, Du(x+e_jh))- f_{\xi} (x+e_jh, Du(x)) , 2 \eta \,   \eta_{x_i} \tau_{j,h} u \rangle \, dx  \right| \notag \\
    & \leq 2 L \int_{\Omega} \eta \sum_{i=1}^{n}\, \left( \, |u_{x_i}(x+e_jh)|^2+|u_{x_i}(x) |^2 \, \right)^\frac{p_i -2}{2}| \tau_{j,h} u_{x_i} | | \tau_{j,h} u ||\eta_{x_i}| dx  \notag \\
& \leq 2 \varepsilon \underbrace{\int_{\Omega}  \eta^2 \sum_{i=1}^{n}\, \left( \, |u_{x_i}(x+e_jh)|^2+|u_{x_i}(x) |^2 \, \right)^\frac{p_i -2}{2}| \tau_{j,h} u_{x_i} |^2 dx}_{\textbf{RHS}}  \notag \\
& \qquad + c_\varepsilon \int_{\Omega} \sum_{i=1}^{n}\, \left( \, |u_{x_i}(x+e_jh)|^2+|u_{x_i}(x) |^2 \, \right)^\frac{p_i -2}{2}  | \tau_{j,h} u |^2 \, |\eta_{x_i}|^2 dx  \notag \\
& \leq 2 \varepsilon \,\textbf{\textbf{RHS}} \,  + \frac{c_\varepsilon}{(t-s)^2}  \int_{B_t} \sum_{i=1}^{n}\, \left( \, |u_{x_i}(x+e_jh)|^2+|u_{x_i}(x) |^2 \, \right)^\frac{p_i -2}{2}  | \tau_{j,h} u |^2 dx  \notag \\
& \leq 2 \varepsilon \,\textbf{\textbf{RHS}} \,  + \frac{c_\varepsilon}{(t-s)^2}\sum_{i=1}^{n}  \Big(  \int_{B_t} \, \left( \, |u_{x_i}(x+e_jh)|^2+|u_{x_i}(x) |^2 \, \right)^\frac{p_i}{2}  dx \Big)^{\frac{p_i -2}{p_i}} \cdot \notag \\
& \qquad \qquad \qquad \ \ \ \ \ \cdot \Big( \int_{B_t} | \tau_{j,h} u|^{p_i} dx \Big)^{\frac{2}{p_i}}  \notag \\
& \leq 2 \varepsilon \,\textbf{\textbf{RHS}} \,  + \frac{c_\varepsilon}{(t-s)^2}\sum_{i=1}^{n} \Big(  \int_{B_{t'}} \,  \,|u_{x_i}(x) |^{p_i} \,  dx \Big)^{\frac{p_i -2}{p_i}}\cdot \Big( \int_{B_t} | \tau_{j,h} u|^{p_i} dx \Big)^{\frac{2}{p_i}},
\end{align}
where in the last two inequalities we used that $|D \eta| \leq \frac{c}{t-s}$, H\"older's inequality and Lemma \ref{ldiff}. For the estimate of the last term in \eqref{termtau}, we have to distinguish between two cases, as done in \cite{BraTau}.\\
\textbf{Case 1:} $\quad \mathbf{i \leq j \leq n}$. 
In this case we have that $p_i \leq p_j$ and this allows us to apply H\"older's inequality as follows
\begin{align}\label{tau2}
\int_{B_t} |\tau_{j,h} u|^{p_i} \, dx &\leq c(R, p_i, p_j) \left( \int_{B_t} |\tau_{j,h} u|^{p_j} \, dx \right)^{\frac{p_i}{p_j}} \leq c(R, p_i, p_j) |h|^{p_i} \left( \int_{B_{t'}} |u_{x_j}|^{p_j} \, dx \right)^{\frac{p_i}{p_j}}.
\end{align}
\textbf{Case 2:}  $\quad \mathbf{j+1 \leq i \leq n }$. Now, it holds $p_i \geq p_j$, but, by assumption \eqref{Ipotesip}, we know that $p_i \le p_n \le p_1+2  \le p_j +2$. Therefore, by H\"older's inequality we deduce


\begin{align}\label{tau3}
\int_{B_t} |\tau_{j,h} u|^{p_i} \, dx &\leq c(R, p_i, p_j) \left( \int_{B_t} |\tau_{j,h} u|^{p_j+2} \right)^{\frac{p_i}{ p_j+2}}\notag\\ &\leq c(R, p_i, p_j) |h|^{p_i} \left( \int_{B_{t'}} |u_{x_j}|^{p_j+2} \right)^{\frac{p_i}{ p_j+2}},
\end{align}
{where the last integral is finite by \eqref{highint}. }
\\Next, inserting \eqref{tau2} and \eqref{tau3} in \eqref{termtau}, we infer
\begin{align}
 |I_2|  & \leq 2 \varepsilon \,\textbf{\textbf{RHS}} \,  + \frac{c_\varepsilon}{(t-s)^2} \, |h|^2\sum_{i=1}^{j}  \, \Big(  \int_{B_{t'}} \,  \,|u_{x_i}(x) |^{p_i} \,  dx \Big)^{\frac{p_i -2}{p_i}} \cdot \left( \int_{B_{t'}} |u_{x_j}|^{p_j} \, dx  \right)^{\frac{2}{p_j}} \notag \\
& \qquad  + \frac{c_\varepsilon}{(t-s)^2} \, |h|^2\sum_{i=j+1}^{n}  \, \Big(  \int_{B_{t'}} \,  \,|u_{x_i}(x) |^{p_i} \,  dx \Big)^{\frac{p_i -2}{p_i}} \cdot \left( \int_{B_{t'}} |u_{x_j}|^{p_j+2} \right)^{\frac{{2}}{ p_j+2}} \notag \\
& \leq 2 \varepsilon \,\textbf{\textbf{RHS}} \,  +\frac{c_\varepsilon}{(t-s)^2} \, |h|^{2}  \sum_{i=1}^{j}  \, \Big(  \int_{B_{t'}} \,  \,|u_{x_i}(x) |^{p_i} \,  dx \Big)^{\frac{p_i -2}{p_i}} \cdot \left( \int_{B_{t'}} |u_{x_j}|^{p_j} \, dx  \right)^{\frac{2}{p_j}} \notag \\
& \qquad  + \frac{c_{\varepsilon,\beta}}{(t-s)^2} \, |h|^{2} \sum_{i=j+1}^{n}  \, \Big(  \int_{B_{t'}} \,  \,|u_{x_i}(x) |^{p_i} \,  dx \Big)^{{\frac{(p_i -2)(p_j+2)}{p_i\,p_j}}} + \beta |h|^{2} \int_{B_{t'}} |u_{x_j}|^{p_j+2} , \notag 
\end{align}
where, in the last line, we used Young's inequality. Therefore, we conclude that
\begin{align}\label{I2}
    |I_2| & \leq  2 \varepsilon \,\textbf{\textbf{RHS}} \,  +\frac{c_\varepsilon}{(t-s)^2} \, |h|^{2}  \sum_{i=1}^{j}  \, \Big(  \int_{B_{t'}} \,  \,|u_{x_i}(x) |^{p_i} \,  dx \Big)^{\frac{p_i -2}{p_i}} \cdot \left( \int_{B_{t'}} |u_{x_j}|^{p_j} \, dx  \right)^{\frac{2}{p_j}} \notag \\
& \qquad  + \frac{c_{\varepsilon,\beta}}{(t-s)^2} \, |h|^{2} \sum_{i=j+1}^{n}  \, \Big(  \int_{B_{t'}} \,  \,|u_{x_i}(x) |^{p_i} \,  dx \Big)^{{\frac{(p_i -2)(p_j+2)}{p_i\,p_j}}} + \beta |h|^{2} \int_{B_{t'}} |u_{x_j}|^{p_j+2} .
\end{align}
From condition \eqref{A4}, Lemma \ref{ldiff}, the properties of $\eta$ and H\"older's inequality, 
 we derive
\begin{align}\label{I3prima}
    |I_3| &= \left| \int_{\Omega} \langle f_{\xi} (x+e_jh, Du(x))- f_{\xi} (x, Du(x)) , 2 \eta D \eta \tau_{j,h} u \rangle \, dx  \right| \notag \\
    & \leq 2|h| \sum_{i=1}^n \int_{\Omega} \eta | \eta_{x_i}||\tau_{j, h} u| \, \Big(g(x+h)+ g(x) \Big) \,|u_{x_i}|^{p_i-1} dx \notag \\
    & \leq \frac{c}{t-s}|h| \sum_{i=1}^n \int_{B_t}  |\tau_{j, h} u| \, \Big(g(x+h)+ g(x) \Big) \,|u_{x_i}|^{p_i-1} dx \notag \\
      &  \leq  \frac{c}{t-s} \, |h| \, \left( \int_{B_{R}} g(x)^{r} dx \right)^{\frac{1}{r}}  \underbrace{\sum_{i=1}^n\left( \int_{B_{t}}|\tau_{j,h} u|^{\frac{r}{r-1}} |u_{x_i}|^{\frac{r(p_i-1)}{r-1}} dx \right)^{\frac{r-1}{r}}}_{J}.  
\end{align}

Notice that 
$$\frac{1}{p_j+2}\frac{r}{r-1}  < 1$$ 
if and only if
\begin{equation}\label{Stima3p}
  r> \frac{p_j+2}{p_j+1}.
\end{equation}
Inequality \eqref{Stima3p} is satisfied by virtue of assumption \eqref{Ipotesir}, once we observe that
$$r >p_n+2 \geq p_j+2 >\frac{p_j+2}{p_j+1}.$$
Thanks to Hölder's inequality and Lemma \ref{ldiff},  we get
\begin{align}
    J &\leq  \left( \int_{B_{t}}|\tau_{j,h} u|^{p_j+2}  \, dx \right)^{\frac{1}{p_j+2}} \  \sum_{i=1}^{n}\left(   \int_{B_t} |u_{x_i}|^{\frac{(p_j+2)(p_i-1)r}{(r-1)(p_j+2)-r}} \, dx \right)^{\frac{(r-1)(p_j+2)-r}{r(p_j+2)}} \notag 
\end{align} 
Notice that 
$$\frac{(p_j+2)(p_i-1)r}{(r-1)(p_j+2)-r} \leq p_i+2$$
if and only if
\begin{equation}\label{sommabiliR}
r \geq \frac{(p_i+2)(p_j+2)}{3p_j-p_i+4}.
\end{equation}
Inequality \eqref{sommabiliR} is satisfied by virtue of assumption \eqref{Ipotesir} and since
$$\frac{(p_i+2)(p_j+2)}{3p_j-p_i+4} < p_i+2,$$
once we observe that
$$p_i < 2p_j +2$$
holds true by assumption \eqref{Ipotesip}.\\
Hence, we can apply Hölder's inequality and Lemma \ref{ldiff}
to infer
\begin{align}\label{J}
    J &\leq  c(R)\left( \int_{B_{t}}|\tau_{j,h} u|^{p_j+2}  \, dx \right)^{\frac{1}{p_j+2}} \  \sum_{i=1}^{n}\left(   \int_{B_t} |u_{x_i}|^{p_i+2} \, dx \right)^{\frac{p_i-1}{p_i+2}} \notag \\
    & \leq c(R)|h| \left( \int_{B_{t}}|u_{x_j}|^{p_j+2}  \, dx \right)^{\frac{1}{p_j+2}} \  \sum_{i=1}^{n}\left(   \int_{B_t} |u_{x_i}|^{p_i+2} \, dx \right)^{\frac{p_i-1}{p_i+2}}. 
\end{align} 
Now inserting \eqref{J} in \eqref{I3prima}, using twice Young's inequality and denoting by $\alpha= \frac{p_i+2}{3}$ the conjugate of the exponent $\frac{p_i+2}{p_i-1}$ and $\delta$ the conjugate of $\frac{p_j+2}{\alpha}>1$,  we have
\begin{align}\label{I3}
    |I_3| &\leq \frac{c_{ \beta}}{(t-s)^\alpha} \, |h|^{2}  \sum_{i=1}^{n}   \left( \int_{B_{R}} g(x)^{r} \, dx \right)^{\frac{\alpha }{r}}\ \left( \int_{B_{t}}|u_{x_j}|^{p_j+2}  \, dx \right)^{\frac{\alpha}{p_j+2}} \notag \\ 
    & \qquad + |h|^{2} \beta \sum_{i=1}^{n} \int_{B_{t'}} |u_{x_i}|^{p_i+2} \, dx  \notag \\
     &\leq   \frac{\bar{c}_{ \beta}}{(t-s)^{\alpha\delta}} \, |h|^{2}   \left( \int_{B_{R}} g(x)^{r} \, dx \right)^{\frac{\alpha\, \delta }{r}} + |h|^{2} \beta   \int_{B_{t}}|u_{x_j}|^{p_j+2}  \, dx  \notag \\ 
    & \qquad + |h|^{2} \beta \sum_{i=1}^{n} \int_{B_{t'}} |u_{x_i}|^{p_i+2} \, dx \notag \\
      &\leq   \frac{\bar{c}_{ \beta}}{(t-s)^{\alpha\delta}} \, |h|^{2}   \left( \int_{B_{R}} g(x)^{r} \, dx \right)^{\frac{\alpha\, \delta }{r}} + |h|^{2} \beta \sum_{i=1}^{n} \int_{B_{t}}|u_{x_i}|^{p_i+2}  \, dx  \notag \\ 
    & \qquad + |h|^{2} \beta \sum_{i=1}^{n}  \int_{B_{t'}} |u_{x_i}|^{p_i+2} \, dx \notag \\
       &\leq   \frac{\bar{c}_{ \beta}}{(t-s)^{\alpha\delta}}  |h|^{2}  \left( \int_{B_{R}} g(x)^{r} \, dx \right)^{\frac{\alpha\, \delta }{r}}  + |h|^{2} 2\beta \sum_{i=1}^{n} \int_{B_{t'}} |u_{x_i}|^{p_i+2} \, dx . 
\end{align}

Similarly as for the estimate of $I_3$, we obtain
\begin{align}\label{IntI4prima}
     |I_4| &= \left|  \int_{\Omega} \langle f_{\xi} (x+e_jh, Du(x))- f_{\xi} (x, Du(x)) , \eta^2  \tau_{j,h} Du \rangle \, dx \right| \notag \\
    & \leq c |h| \sum_{i=1}^{n}  \int_{\Omega} \eta^2 |\tau_{j, h}u_{x_i}| \, (g(x+h)+ g(x))\, |u_{x_i}|^{p_i -1} dx \notag \\
    & \leq \varepsilon  \underbrace{\int_{\Omega} \eta^2 \sum_{i=1}^{n}\, \left( \, |u_{x_i}(x+e_jh)|^2+|u_{x_i}(x) |^2 \, \right)^\frac{p_i -2}{2}| \tau_{j,h} u_{x_i} |^2 dx}_{\textbf{\textbf{RHS}}}  \notag \\
    & \qquad + c_{\varepsilon} |h|^2 \sum_{i=1}^{n}  \int_{\Omega} \eta^2 |u_{x_i}|^{p_i} \, (g(x+h)+ g(x))^2\,  dx \notag \\
     & \leq \varepsilon \textbf{\textbf{RHS}} + c_{\varepsilon} |h|^{2} \sum_{i=1}^{n}  \left( \int_{B_{R}} g(x)^{r} dx \right)^{\frac{2}{r}} \, \left( \int_{B_{t}} |u_{x_i}|^{\frac{p_i r}{r-2}} dx \right)^{\frac{r-2}{r}}.
\end{align}
We note that
$$\frac{(p_i+2)}{p_ir}(r-2)>1 \Longleftrightarrow{r > p_i+2}$$
which is true by assumption \eqref{Ipotesir}.
Therefore applying H\"older's inequality in \eqref{IntI4prima}, we infer 
\begin{align}\label{I4}
 |I_4| &\leq  \varepsilon \textbf{\textbf{RHS}} +    c_{\varepsilon} |h|^{2} \sum_{i=1}^{n}  \left( \int_{B_{R}} g(x)^{r} dx \right)^{\frac{2}{r}} \,   \left( \int_{B_{t}} |u_{x_i}|^{p_i+2} dx \right)^{\frac{p_i}{p_i +2}} \notag \\
 & \leq \varepsilon \textbf{\textbf{RHS}} +  c_{\varepsilon, \beta} |h|^2 \sum_{i=1}^{n} \left( \int_{B_{R}} g(x)^{r} \, dx \right)^{\frac{p_i+2}{r}} +  |h|^2 \beta \sum_{i=1}^{n}   \int_{B_{{t'}}} |u_{x_i}|^{p_i+2} dx  \notag \\
 & \leq \varepsilon \textbf{\textbf{RHS}} +  c_{\varepsilon, \beta} |h|^2 \left( \int_{B_{R}} g(x)^{r} \, dx \right)^{\gamma} +  |h|^2 \beta \sum_{i=1}^{n}   \int_{B_{{t'}}} |u_{x_i}|^{p_i+2} dx ,
\end{align}
where we used Young's inequality with exponents $ \frac{p_i +2}{p_i}$ and $\frac{p_i+2}{2}$ and $\gamma=\gamma(p_i)$.

By applying H\"older's inequality, Lemma \ref{ldiff} and Young's inequality, we estimate $|I_5|$ as follows
\begin{align}
    |I_5| \le & \int_{B_t}|\tau_{j,h}\omega| \, |\tau_{j,h}u| \, dx \notag\\
    \le &  \left( \int_{B_t}|\tau_{j,h}\omega|^{\frac{p_j+2}{p_j+1}} \, dx\right)^\frac{p_j+1}{p_j+2} \left( \int_{B_t}|\tau_{j,h}u|^{p_j+2} \, dx\right)^\frac{1}{p_j+2} \notag\\
    \le & \, c |h|^2 \left( \int_{B_{t'}}|\omega_{x_j}|^{\frac{p_j+2}{p_j+1}} \, dx\right)^\frac{p_j+1}{p_j+2} \left( \int_{B_{t'}}|u_{x_j}|^{p_j+2} \, dx\right)^\frac{1}{p_j+2} \notag\\
    \le & \, c_\beta |h|^2 \sum_{i=1}^n \int_{B_R}|\omega_{x_i}|^{\frac{p_i+2}{p_i+1}} \, dx +\beta|h|^2 \sum_{i=1}^n  \int_{B_{t'}}|u_{x_i}|^{p_i+2} \, dx
    , \label{J_7}
\end{align}
where we also used the properties of $\eta$ and that $\omega \in  W^{1, \frac{\textbf{p}+2}{\textbf{p}+1}}_{\mathrm{loc}}(\Omega)$.

Inserting \eqref{I1}, \eqref{I2}, \eqref{I3}, \eqref{I4} and \eqref{J_7} in \eqref{SommaInt}, we get
\begin{align}\label{Formulacompleta}
 l &\int_{\Omega} \eta^2 \sum_{i=1}^{n}\, \left( \, |u_{x_i}(x+e_jh)|^2+|u_{x_i}(x) |^2 \, \right)^\frac{p_i -2}{2}| \tau_{j,h} u_{x_i} |^2 dx \notag \\
& \qquad \leq  3 \varepsilon \, \int_{\Omega}  \eta^2 \sum_{i=1}^{n}\, \left( \, |u_{x_i}(x+e_jh)|^2+|u_{x_i}(x) |^2 \, \right)^\frac{p_i -2}{2}| \tau_{j,h} u_{x_i} |^2 dx \notag \\ 
& \qquad +\frac{c_\varepsilon}{(t-s)^2} \, |h|^{2}  \sum_{i=1}^{j}  \, \Big(  \int_{B_{t'}} \,  \,|u_{x_i}(x) |^{p_i} \,  dx \Big)^{\frac{p_i -2}{p_i}} \cdot \left( \int_{B_{t'}} |u_{x_j}|^{p_j} \, dx  \right)^{\frac{2}{p_j}} \notag \\
& \qquad  + \frac{c_{\varepsilon,\beta}}{(t-s)^2} \, |h|^{2} \sum_{i=j+1}^{n}  \, \Big(  \int_{B_{t'}} \,  \,|u_{x_i}(x) |^{p_i} \,  dx \Big)^{{\frac{(p_i -2)(p_j+2)}{p_i\,p_j}}} + {5 \beta} |h|^2 \sum_{i=1}^{n} \int_{B_{t'}} |u_{x_i}|^{p_i+2} \notag \\
& {\qquad + \frac{\bar{c}_{\beta}}{{(t-s)^{\alpha\delta}}} \, |h|^{2} \sum_{i=1}^{n} \left( \int_{B_{R}} g(x)^{r} \, dx \right)^{\frac{\alpha\, \delta }{r}} } \notag \\
     & \qquad +   c_{\varepsilon, \beta} |h|^2  \left( \int_{B_{R}} g(x)^{r} \, dx \right)^{^{\gamma}}\notag\\
     & \qquad +\, c_\beta |h|^2 \sum_{i=1}^n \int_{B_R}|\omega_{x_i}|^{\frac{p_i+2}{p_i+1}}  \, dx.  
\end{align}
Choosing $\varepsilon = \frac{l}{6}$, we can reabsorb the first integral in the right-hand side of \eqref{Formulacompleta} by the left-hand side thus getting
\begin{align}
 &\int_{\Omega} \eta^2 \sum_{i=1}^{n}\, \left( \, |u_{x_i}(x+e_jh)|^2+|u_{x_i}(x) |^2 \, \right)^\frac{p_i -2}{2}| \tau_{j,h} u_{x_i} |^2 dx \notag \\
&  \qquad \leq \frac{c}{(t-s)^2} \, |h|^{2}  \sum_{i=1}^{j}  \, \Big(  \int_{B_{t'}} \,  \,|u_{x_i}(x) |^{p_i} \,  dx \Big)^{\frac{p_i -2}{p_i}} \cdot \left( \int_{B_{t'}} |u_{x_j}|^{p_j} \, dx  \right)^{\frac{2}{p_j}} \notag \\
& \qquad  + \frac{c_{\beta}}{(t-s)^2} \, |h|^{2} \sum_{i=j+1}^{n}  \, \Big(  \int_{B_{t'}} \,  \,|u_{x_i}(x) |^{p_i} \,  dx \Big)^{{\frac{(p_i -2)(p_j+2)}{p_i\,p_j}}} +{5 \beta} |h|^2 \sum_{i=1}^{n}  \int_{B_{t'}} |u_{x_i}|^{p_i+2}\notag \\
& {\qquad + \frac{\bar{c}_{\beta}}{{(t-s)^{\alpha\delta}}} \, |h|^{2} \sum_{i=1}^{n} \left( \int_{B_{R}} g(x)^{r} \, dx \right)^{\frac{\alpha\, \delta }{r}} } \notag \\
     & \qquad +   c_{\varepsilon, \beta} |h|^2  \left( \int_{B_{R}} g(x)^{r} \, dx \right)^{^{\gamma}} \notag\\
     & \qquad +\, c_\beta |h|^2 \sum_{i=1}^n \int_{B_R}|\omega_{x_i}|^{\frac{p_i+2}{p_i+1}}  \, dx.  
\end{align}
Then, by Lemmas \ref{D1} and \ref{Lemmahzero}, letting $|h| \to 0,$ we obtain
\begin{align}\label{sommauxij}
 &    \sum_{i=1}^{n}   \left( \int_{B_s} |u_{x_i x_j}|^2|u_{x_i}|^{p_i-2} dx \right) \notag \\
&  \qquad \leq \frac{c}{(t-s)^2} \,   \sum_{i=1}^{j}  \, \Big(  \int_{B_{R}} \,  \,|u_{x_i}(x) |^{p_i} \,  dx \Big)^{\frac{p_i -2}{p_i}} \cdot \left( \int_{B_{R}} |u_{x_j}|^{p_j} \, dx  \right)^{\frac{2}{p_j}} \notag \\
& \qquad  + \frac{c}{(t-s)^2} \,  \sum_{i=j+1}^{n}  \, \Big(  \int_{B_{R}} \,  \,|u_{x_i}(x) |^{p_i} \,  dx \Big)^{{\frac{(p_i -2)(p_j+2)}{p_i\,p_j}}}  \notag \\
& {\qquad + \frac{\bar{c}_{\beta}}{{(t-s)^{\alpha\delta}}} \, \sum_{i=1}^{n} \left( \int_{B_{R}} g(x)^{r} \, dx \right)^{\frac{\alpha\, \delta }{r}} } \notag \\
     & \qquad +   c_{\varepsilon, \beta}  \left( \int_{B_{R}} g(x)^{r} \, dx \right)^{{\gamma}}
     + {5 \beta} \sum_{i=1}^{n}  \int_{B_{t'}} |u_{x_i}|^{p_i+2}  \notag\\
     & \qquad +\, c_\beta \sum_{i=1}^n \int_{B_R}|\omega_{x_i}|^{\frac{p_i+2}{p_i+1}}  \, dx ,
\end{align}
which  in particular yields for every $i=1, \dots, n$
\begin{align}
 &       \int_{B_s} |u_{x_i x_i}|^2|u_{x_i}|^{p_i-2} dx   \leq \frac{c}{(t-s)^2} \,   \sum_{h=1}^{i}  \, \Big(  \int_{B_{R}} \,  \,|u_{x_h}(x) |^{p_h} \,  dx \Big)^{\frac{p_h -2}{p_h}} \cdot \left( \int_{B_{R}} |u_{x_i}|^{p_i} \, dx  \right)^{\frac{2}{p_i}} \notag \\
& \qquad  + \frac{c}{(t-s)^2} \,  \sum_{h=i+1}^{n}  \, \Big(  \int_{B_{R}} \,  \,|u_{x_h}(x) |^{p_h} \,  dx \Big)^{{\frac{(p_h -2)(p_i+2)}{p_h\,p_i}}}  \notag \\
& {\qquad + \frac{\bar{c}_{\beta}}{{(t-s)^{\alpha\delta}}} \,  \sum_{i=1}^{n} \left( \int_{B_{R}} g(x)^{r} \, dx \right)^{\frac{\alpha\, \delta }{r}} } \notag \\
     & \qquad +   c_{\varepsilon, \beta}   \left( \int_{B_{R}} g(x)^{r} \, dx \right)^{^{\gamma}}
      + {5 \beta} \sum_{h=1}^{n}  \int_{B_{t'}} |u_{x_h}|^{p_h+2}  \notag\\
      & \qquad  +\, c_\beta \sum_{h=1}^n \int_{B_R}|\omega_{x_h}|^{\frac{p_h+2}{p_h+1}}  \, dx .
\end{align}
Summing over $i=1,\dots, n$, the previous inequality, we infer
\begin{align}\label{sommasuj}
     \sum_{i=1}^n & \left(   \int_{B_s} |u_{x_i x_i}|^2|u_{x_i}|^{p_i-2} dx  \right) \notag \\
 &\leq  \frac{c}{(t-s)^2} \,  \sum_{i=1}^{n}\sum_{h=1}^{i} \left(  \int_{B_{R}} \,  \,|u_{x_h}(x) |^{p_h} \,  dx \right)^{\frac{p_h -2}{p_h}} \cdot \left( \int_{B_{R}} |u_{x_i}(x)|^{p_i} \, dx  \right)^{\frac{2}{p_i}} \notag \\
& \qquad  + \frac{c}{(t-s)^2} \,   \sum_{i=1}^{n} \sum_{h=i+1}^{n}  \, \Big(  \int_{B_{R}} \,  \,|u_{x_h}(x) |^{p_h} \,  dx \Big)^{{\frac{(p_h -2)(p_i+2)}{p_h\,p_i}}}   \notag \\
& \qquad +n\Bigg[   { \frac{\bar{c}_{\beta}}{{(t-s)^{\alpha\delta}}} \,  \sum_{h=1}^{n} \left( \int_{B_{R}} g(x)^{r} \, dx \right)^{\frac{\alpha\, \delta }{r}} } \notag \\
     & \qquad +   c_{\varepsilon, \beta}  \left( \int_{B_{R}} g(x)^{r} \, dx \right)^{^{\gamma}}
      + {5 \beta} \sum_{h=1}^{n} \left( \int_{B_{t'}} |u_{x_h}|^{p_h+2} \right)\notag\\
      & \qquad +\, c_\beta \sum_{h=1}^n \int_{B_R}|\omega_{x_h}|^{\frac{p_h+2}{p_h+1}}  \, dx \Bigg] \notag \\
      &  \leq \frac{c}{(t-s)^2} \,   \sum_{i=1}^{n} \left( 1+ \sum_{h=1}^{n}\int_{B_{R}} \,  \,|u_{x_h}(x) |^{p_h} \,  dx \right)^{\frac{p_i -2}{p_i}} \cdot  \sum_{i=1}^{n} \left( 1 + \sum_{h=1}^{n}\int_{B_{R}} |u_{x_h}(x)|^{p_h} \, dx  \right)^{\frac{2}{p_i}} \notag \\
      & \qquad  + \frac{c}{(t-s)^2} \,   \sum_{i=1}^{n} \sum_{h=i+1}^{n}  \, \Big(  \int_{B_{R}} \,  \,|u_{x_h}(x) |^{p_h} \,  dx \Big)^{{\frac{(p_h -2)(p_i+2)}{p_h\,p_i}}}   \notag \\
& \qquad +n\Bigg[  { \frac{\bar{c}_{\beta}}{{(t-s)^{\alpha\delta}}} \,  \sum_{i=1}^{n} \left( \int_{B_{R}} g(x)^{r} \, dx \right)^{\frac{\alpha\, \delta }{r}} } \notag \\
     & \qquad +   c_{\varepsilon, \beta}   \left( \int_{B_{R}} g(x)^{r} \, dx \right)^{^{\gamma}}
      + {5 \beta} \sum_{h=1}^{n} \left( \int_{B_{t'}} |u_{x_h}|^{p_h+2} \right)\notag\\
      & \qquad +\, c_\beta \sum_{h=1}^n \int_{B_R}|\omega_{x_h}|^{\frac{p_h+2}{p_h+1}}  \, dx \Bigg]  \notag \\
        &  \leq \frac{c}{(t-s)^2} \,   \sum_{i=1}^{n} \left( 1+ \sum_{h=1}^{n}\int_{B_{R}} \,  \,|u_{x_h}(x) |^{p_h} \,  dx \right)^{\tilde{\alpha}} \cdot  \sum_{i=1}^{n} \left( 1 + \sum_{h=1}^{n}\int_{B_{R}} |u_{x_h}(x)|^{p_h} \, dx  \right)^{\tilde{\beta}} \notag \\
      & \qquad  + \frac{c}{(t-s)^2} \,   \sum_{i=1}^{n} \sum_{h=i+1}^{n}  \, \Big(  \int_{B_{R}} \,  \,|u_{x_h}(x) |^{p_h} \,  dx \Big)^{{\frac{(p_h -2)(p_i+2)}{p_h\,p_i}}}   \notag \\
& \qquad +n\Bigg[  { \frac{\bar{c}_{\beta}}{{(t-s)^{\alpha\delta}}} \,  \sum_{h=1}^{n} \left( \int_{B_{R}} g(x)^{r} \, dx \right)^{\frac{\alpha\, \delta }{r}} } \notag \\
     & \qquad + c_{\beta}   \left( \int_{B_{R}} g(x)^{r} \, dx \right)^{^{\gamma}}
      + {5 \beta}\sum_{h=1}^{n} \left( \int_{B_{t'}} |u_{x_h}|^{p_h+2} \right) \notag \\
      & \qquad +\, c_\beta \sum_{h=1}^n \int_{B_R}|\omega_{x_h}|^{\frac{p_h+2}{p_h+1}}  \, dx \Bigg]  \notag \\
       &  \leq \frac{c}{(t-s)^2} \,  n^2 \left( 1+ \sum_{h=1}^{n}\int_{B_{R}} \,  \,|u_{x_h}(x) |^{p_h} \,  dx \right)^{\tilde{\alpha}} \cdot \left( 1 + \sum_{h=1}^{n}\int_{B_{R}} |u_{x_h}(x)|^{p_h} \, dx  \right)^{\tilde{\beta}} \notag \\
      & \qquad  + \frac{c}{(t-s)^2} \,   \sum_{i=1}^{n} \sum_{h=i+1}^{n}  \, \Big(  \int_{B_{R}} \,  \,|u_{x_h}(x) |^{p_h} \,  dx \Big)^{{\frac{(p_h -2)(p_i+2)}{p_h\,p_i}}}   \notag \\
& \qquad +n\Bigg[   { \frac{\bar{c}_{\beta}}{{(t-s)^{\alpha\delta}}} \,  \sum_{h=1}^{n} \left( \int_{B_{R}} g(x)^{r} \, dx \right)^{\frac{\alpha\, \delta }{r}} } \notag \\
     & \qquad +   c_{\varepsilon, \beta} |h|^2  \left( \int_{B_{R}} g(x)^{r} \, dx \right)^{^{\gamma}}
      + {5 \beta} \sum_{h=1}^{n} \left( \int_{B_{t'}} |u_{x_h}|^{p_h+2} \right) \notag \\
      & \qquad +\, c_\beta \sum_{h=1}^n \int_{B_R}|\omega_{x_h}|^{\frac{p_h+2}{p_h+1}}  \, dx \Bigg]  \notag \\
       &  \leq \frac{n^2 c}{(t-s)^2} \,   \left( 1+ \sum_{h=1}^{n}\int_{B_{R}} \,  \,|u_{x_h}(x) |^{p_h} \,  dx \right)^{\tilde{\alpha}+ \tilde{\beta}}  \notag \\
      & \qquad  + \frac{c}{(t-s)^2} \,   \sum_{i=1}^{n} \sum_{h=i+1}^{n}  \, \Big(  \int_{B_{R}} \,  \,|u_{x_h}(x) |^{p_h} \,  dx \Big)^{{\frac{(p_h -2)(p_i+2)}{p_h\,p_i}}}   \notag \\
& \qquad +n\Bigg[   { \frac{\bar{c}_{\beta}}{{(t-s)^{\alpha\delta}}} \,  \sum_{h=1}^{n} \left( \int_{B_{R}} g(x)^{r} \, dx \right)^{\frac{\alpha\, \delta }{r}} } \notag \\
     & \qquad +   c_{\varepsilon, \beta} |h|^2  \left( \int_{B_{R}} g(x)^{r} \, dx \right)^{{\gamma}} 
      + {5 \beta} \sum_{h=1}^{n} \left( \int_{B_{t'}} |u_{x_h}|^{p_h+2} \right)\notag\\
      & \qquad +\, c_\beta \sum_{h=1}^n \int_{B_R}|\omega_{x_h}|^{\frac{p_h+2}{p_h+1}}  \, dx \Bigg]  , 
\end{align}
where $\tilde{\alpha}= \max \{ \frac{p_i-2}{p_i}, i=1, \dots, n\}$ and $\tilde{\beta}= \max \{ \frac{2}{p_i}, i=1, \dots, n\}$.\\ 
From the hypothesis
 $$V_{p_i}(u_{x_i}) \in W^{1,2}_{\mathrm{loc}}(\Omega)$$
and Proposition \ref{LemmaBrasco} together Theorem with \ref{ulimitatoCupini}, we have
$$u_{x_i} \in L^{p_i+2}_{\mathrm{loc}}(\Omega ), \quad \forall i= 1, \dots, n $$
and the following estimate holds for every $i=1,\dots, n$
 \begin{align}\label{TesiProprof}
      \int_{B_{{t'}}} |u_{x_i}|^{p_i+2} dx &\leq c \Vert u \Vert_{\infty} \left[   \left( \int_{B_{\sigma}} |u_{x_i x_i}|^2|u_{x_i}|^{p_i-2} dx \right)+ \frac{1}{(\sigma-t')^\gamma}  \left(\int_{B_{\sigma}} |u_{x_i}|^{p_i} dx \right)^{\gamma} \right] \notag \\
      & \leq c \Vert u \Vert_{\infty} \left[   \left( \int_{B_{\sigma}} |u_{x_i x_i}|^2|u_{x_i}|^{p_i-2} dx \right)+ \frac{1}{(\sigma-t')^\gamma}  \left(\int_{B_R} |u_{x_i}|^{p_i} dx \right)^{\gamma} \right],
   \end{align}
   with $c=c(n, p_i)$ and $\gamma=\gamma(p_i).$\\
Summing the inequality \eqref{TesiProprof} over  $i=1,\dots, n$ and inserting the resulting inequality in \eqref{sommasuj}, we find that
\begin{align}\label{uxipi+2}
 &  \sum_{i=1}^n \left(   \int_{B_s} |u_{x_i x_i}|^2|u_{x_i}|^{p_i-2} dx   \right)  \notag \\ 
& \leq    \beta \,\tilde{c}\lVert u \rVert_{\infty} \sum_{i=1}^{n}   \left( \int_{B_{{\sigma}}} |u_{x_i x_i}|^2|u_{x_i}|^{p_i-2} dx \right)+ \frac{n^2 c}{(t-s)^2} \,   \left( 1+ \sum_{i=1}^{n}\int_{B_{R}} \,  \,|u_{x_i}(x) |^{p_i} \,  dx \right)^{\tilde{\alpha}+ \tilde{\beta}}   \notag \\
& \qquad  + \frac{c_2}{(t-s)^2} \,  \sum_{j=1}^{n} \sum_{i=j+1}^{n}  \, \Big(  \int_{B_{R}} \,  \,|u_{x_i}(x) |^{p_i} \,  dx \Big)^{ {\frac{(p_i -2)(p_j+2)}{p_i\,p_j}}}  \notag \\
& \qquad + \frac{c_1}{{(t-s)^{\alpha\delta}}} \, {\, \sum_{i=1}^{n} \left( \int_{B_{R}} g(x)^{r} \, dx \right)^{\frac{\alpha\, \delta }{r}} } \notag \\
     & \qquad + c_{3} \left[ \left( \int_{B_{R}} g(x)^{r} \, dx \right)^{\frac{p_n+2}{r}} {+\,  \sum_{h=1}^n \int_{B_R}|\omega_{x_h}|^{\frac{p_h+2}{p_h+1}}  \, dx}
     \right] \notag \\
    &  \qquad +  \frac{c_4}{({\sigma}-t')^\gamma}\sum_{i=1}^{n}  \left(\int_{B_R} |u_{x_i}|^{p_i} dx \right)^{\gamma}  .
\end{align}
Setting  $$ \phi(\rho)= \sum_{i=1}^n   \left( \int_{B_\rho} |u_{x_i x_i}|^2|u_{x_i}|^{p_i-2} dx \right) dx, $$
inequality \eqref{uxipi+2} can be written as follows
    \begin{equation}\label{phirho}
\phi(\rho) \leq \beta \tilde{c} \lVert u \rVert_{\infty} \phi({\sigma})+ c_3 + \frac{c_2}{(t-s)^2} +\frac{c_1}{{(t-s)^{\alpha\delta}}} +  \frac{c_4}{({\sigma}-t')^\gamma}
\end{equation}
Choosing $\beta >0$ such that $ \theta =\beta \tilde{c} \lVert u \rVert_{\infty}<1$ and since \eqref{phirho} holds true for every radii $\rho<s<t<t'<\sigma$, we may select radii selecting radii
  $$s= \rho + \frac{\sigma-\rho}{4}, \,t = \rho + \frac{\sigma-\rho}{2}, \, t'= \rho + \frac{3(\sigma-\rho)}{4},$$
  so that $t-s=\sigma-t'=\frac{\sigma- \rho}{4}.$ Hence,  \eqref{phirho} becomes
  \begin{equation}\label{phir}
\phi(\rho) \leq \theta  \phi(\sigma)+ c_3 + 
\frac{\tilde{c}_2}{(\sigma-\rho)^2} +\frac{\tilde{c}_1}{(\sigma-\rho)^{\alpha\delta}} +  \frac{\tilde{c}_4}{(\sigma-\rho)^\gamma}.
\end{equation}
Since \eqref{phir} holds true for every $\frac{R}{4}<\rho<\sigma<R$, we are legitimate to apply Lemma \ref{lm3} to the function $\phi: [\frac{R}{4},R] \to \mathbb{R}$, thus getting
\begin{equation}\label{phiR}
    \phi \left(\frac{R}{4} \right) \leq  c_3 + 
    \frac{\tilde{c}_2}{R^2} +\frac{\tilde{c}_1}{R^{\alpha \delta}} +  \frac{\tilde{c}_4}{R^\gamma}.
    \end{equation}
   Recalling the definition of $\phi(t)$, \eqref{phiR} yields that 
    \begin{align}\label{Stimacompattauxipi+2}
        \sum_{i=1}^{n}   \int_{B_{R/4}} |u_{x_i x_i}|^2|u_{x_i}|^{p_i-2} dx  & \leq c  \left( 1+\sum_{i=1}^{n} \left(\Vert u_{x_i} \Vert_{L^{p_i}(B_R)}+{\Vert \omega_{x_i} \Vert_{L^{\frac{p_i+2}{p_i+1}} (B_R) }}\right) + \Vert g \Vert_{L^r (B_{R})} \right)^\sigma ,
\end{align}
where $c=c(p_i,n,l, L, R, \Vert u \Vert_{\infty})$ and $\sigma=\sigma(n,p_i, r)$.
Putting \eqref{Stimacompattauxipi+2} in \eqref{TesiProprof}, we derive also
\begin{align}\label{pi+2}
 \sum_{i=1}^n \int_{B_{R/4}} |u_{x_i}|^{p_i+2} dx & \leq c \left( 1+\sum_{i=1}^{n} \left(\Vert u_{x_i} \Vert_{L^{p_i}(B_R)} +{\Vert \omega_{x_i} \Vert_{L^{\frac{p_i+2}{p_i+1}} (B_R) }}\right)+ \Vert g \Vert_{L^r (B_{R})} \right)^\sigma
     , 
\end{align}
for positive constants $c = c(n,p_i,\lambda, \Lambda,R, \Vert u \Vert_{\infty})$ and $\sigma= \sigma (n,p_i, r)$.\\
Now, inserting \eqref{pi+2} in \eqref{sommauxij}, we infer for every $j=1, \dots, n$
\begin{align}\label{Stimacompattauxixjpi+2}
        \sum_{i=1}^{n}   \left( \int_{B_{R/4}} |u_{x_i x_j}|^2|u_{x_i}|^{p_i-2} dx \right) & \leq c  \left(1+ \sum_{i=1}^{n} \left(\Vert u_{x_i} \Vert_{L^{p_i}(B_R)}+{\Vert \omega_{x_i} \Vert_{L^{\frac{p_i+2}{p_i+1}} (B_R) }} \right) + \Vert g \Vert_{L^r (B_{R})} \right)^\sigma,
\end{align}
which concludes the proof.
\end{proof}
\section{Approximation}\label{Appro}

In order to perform the approximation argument, let us fix a non-negative smooth kernel $\phi \in \mathcal{C}^\infty_0(B_1(0))$ such that $\int_{B_1(0)} \phi =1$ and consider the corresponding family of mollifiers $(\phi_k)_{k >0}$. We set, for a given ball $B_R \Subset \Omega$ ,
$$a_i^k=a_i * \phi_k, \quad \omega^k= \omega * \phi_k,$$
\begin{equation}\label{gk}
g_k(x)=\underset{i}{\max} \{ |D a_i^k (x)|\}
\end{equation}
and, for $x \in B_R$, we define
\begin{equation}\label{fvarepsilon}
f^k (x,\xi)= \sum_{i=1}^{n} a_i^k(x)|\xi|^{p_i} ,
\end{equation}
 for every $k < \text{dist}(B_R,\Omega)$. We shall use the following $$\mathcal{F}^{k}(v,B_R):=\int_{_{B_R} }\left( f^{k}(x, Dv)-\omega(x)v(x) \right) dx$$ and
$$\mathcal{F}^{k}_{\varepsilon}(v,B_R):= \, \int_{B_R} \left( f^k(x,Dv)+ \varepsilon(1+ |Dv|^2)^{\frac{p_n}{2}}- \omega^k(x)v(x) \right) dx.$$
\begin{flushleft}
One can easily check that $f^k(x, \xi)$ satisfies assumptions \eqref{A2}--\eqref{A3} and \eqref{A'4}, with $K = \sup_{B_R} |g_k(x)| $ that is finite since $a^k_i(x) \in W^{1, \infty}(B_R)$ and then $g_k \in L^{\infty}(B_R)$.\\
\end{flushleft}

We are in position to prove our main result.

\begin{proof}[\textit{{Proof of Theorem~\ref{thmBfinito}}}]
We consider the variational problems
\begin{equation}\label{Pfj}
    \inf \left\{  \int_{B_R} \left( f^k(x,Dv)+ \varepsilon(1+ |Dv|^2)^{\frac{p_n}{2}} -\omega^k(x)v(x)\right) dx \ : \ v \in W^{1,p_n}_0(B_R)+u^\eta \right\},
\end{equation}
where $$u^\eta = u * \varsigma_\eta$$ is the mollification of the local minimizer $u$ of \eqref{functional}, for a sequence of mollifier $\varsigma_n$.\\
It is well known that, by the direct methods of the calculus of variations, there exists a unique solution $u^\eta_{k,\varepsilon} \in  W^{1,p_n}_0(B_R)+u^\eta $ of the problem \eqref{Pfj}. Since the integrand $$f^k(x,Dv)+ \varepsilon(1+ |Dv|^2)^{\frac{p_n}{2}}-\omega^k(x)v(x)$$ satisfies the assumptions of  Theorem \ref{thm2Mascolo}, we have that $$V_{p_i}((u^\eta_{k,\varepsilon})_{x_i}) \in W_{\mathrm{loc}}^{1,2}(B_R) \quad \forall i=1,...,n.$$
Hence we are legitimate to use estimate \eqref{uxistima} of Theorem \ref{AppThm}, to obtain that
\begin{align}
   & \sum_{i=1}^{n}  \int_{B_{R/4}}|(u^\eta_{k,\varepsilon})_{x_i}|^{p_i +2 } dx \notag \\
    & \leq c \left(1+ \sum_{i=1}^{n} \left(\int_{B_R}\, \lvert(u^\eta_{k,\varepsilon})_{x_i} \rvert^{p_i} \ dx+  \Vert \omega^k_{x_i} \Vert_{L^\frac{p_i+2}{p_i+1}(B_{R})} \right)  + \Vert g_k \Vert_{L^r (B_{R})} \right)^\sigma \notag \\
     &  \leq c(\lambda) \left(1+ \sum_{i=1}^{n} \left( \int_{B_R} a_i^k(x)\, \lvert (u^\eta_{k,\varepsilon})_{x_i}\rvert^{p_i} \ dx +  \Vert \omega^k_{x_i} \Vert_{L^\frac{p_i+2}{p_i+1}(B_{R})} \right) + \Vert g_k \Vert_{L^r (B_{R})} \right)^\sigma, \label{der}
\end{align}
where  $c$ is a positive constant that is independent on $\varepsilon, k$ and $\eta$ and where  we used that $a_i^k(x)\geq \lambda, \forall i,k$. 
Now, by the very definition of $f^k$, the minimality of $u^\eta_{k,\varepsilon}$ and using $u^\eta$ as test function, we get
\begin{align}
    \sum_{i=1}^{n} & \int_{B_R}\, a_i^k(x) \lvert(u^\eta_{k,\varepsilon})_{x_i} \rvert^{p_i} \ dx \notag\\
    = & \int_{B_R} f^k(x,Du^\eta_{k,\varepsilon}) \ dx \notag\\
    \le & \int_{B_R} \left(f^k(x,Du^\eta_{k,\varepsilon}) +\varepsilon(1+|Du^\eta_{k,\varepsilon}|^2)^{\frac{p_n}{2}}  -\omega(x)u^\eta_{k,\varepsilon} \right) dx+  \int_{B_R}\omega(x)u^\eta_{k,\varepsilon} dx \notag\\
    \le & \int_{B_R} \left(f^k(x,Du^\eta) +\varepsilon(1+|Du^\eta|^2)^{\frac{p_n}{2}}  -\omega(x)u^\eta \right) dx+  \int_{B_R}\omega(x)u^\eta_{k,\varepsilon} dx \notag\\
    = & \int_{B_R} \left(f^k(x,Du^\eta) +\varepsilon(1+|Du^\eta|^2)^{\frac{p_n}{2}}   \right) dx+  \int_{B_R}\omega^k(x)(u^\eta_{k,\varepsilon}-u^\eta) dx. \label{Poincare}
\end{align}
From the Sobolev-Poincaré inequality at Lemma \ref{anisopoincare}, we derive that
\begin{align}
    \Vert u^\eta_{k,\varepsilon} -u^\eta\Vert_{L^{\overline{p}^*}(B_R)} &\leq c \sum_{i=1}^n\Vert (u^\eta_{k,\varepsilon})_{x_i} - (u^\eta)_{x_i} \Vert_{L^{p_i}(B_R)} \notag\\
    & \le  c \sum_{i=1}^n\left(\Vert (u^\eta_{k,\varepsilon})_{x_i}  \Vert_{L^{p_i}(B_R)} + \Vert (u^\eta)_{x_i} \Vert_{L^{p_i}(B_R)} \right), \notag
\end{align}
and so the second integral in the right hand side of \eqref{Poincare} can be estimated as follows
\begin{align}\label{piccione}
    \int_{B_R}\omega^k(x)(u^\eta_{k,\varepsilon}-u^\eta) dx \le & \ \Vert \omega^k \Vert_{L^\frac{\overline{p}^*}{\overline{p}^*-1}(B_R)}  \Vert u^\eta_{k,\varepsilon} -u^\eta\Vert_{L^{\overline{p}^*}(B_R)}  \notag\\
    \le & \ c(n,p_i,R) \Vert \omega^k \Vert_{L^\frac{\overline{p}^*}{\overline{p}^*-1}(B_R)} \sum_{i=1}^n\left(\Vert (u^\eta_{k,\varepsilon})_{x_i}  \Vert_{L^{p_i}(B_R)} + \Vert (u^\eta)_{x_i} \Vert_{L^{p_i}(B_R)} \right) \notag\\
    \le & \ \dfrac{\lambda}{2} \sum_{i=1}^n \Vert (u^\eta_{k,\varepsilon})_{x_i} \Vert^{p_i}_{L^{p_i}(B_R)} \notag\\
    & \qquad+ c(n,p_i,\lambda,R) \left( \Vert \omega^k\Vert^{\frac{p_i}{p_i-1}}_{L^\frac{\overline{p}^*}{\overline{p}^*-1}(B_R)}+ \Vert (u^\eta)_{x_i} \Vert^{\frac{p_i}{p_i-1}}_{L^{p_i}(B_R)}+\right)\notag\\
    \le & \ \dfrac{1}{2}  \sum_{i=1}^{n} \int_{B_R}\, a_i^k(x) \lvert(u^\eta_{k,\varepsilon})_{x_i} \rvert^{p_i} \ dx  \notag\\
    & \qquad+ \sum_{i=1}^{n} c(n,p_i,\lambda,R) \left( \Vert \omega^k\Vert^{\frac{p_i}{p_i-1}}_{L^\frac{\overline{p}^*}{\overline{p}^*-1}(B_R)}+ \Vert (u^\eta)_{x_i} \Vert^{\frac{p_i}{p_i-1}}_{L^{p_i}(B_R)} \right) \notag\\
    \le & \ \dfrac{1}{2}  \sum_{i=1}^{n} \int_{B_R}\, a_i^k(x) \lvert(u^\eta_{k,\varepsilon})_{x_i} \rvert^{p_i} \ dx  \notag\\
    & \qquad+ \sum_{i=1}^{n} c(n,p_i,\lambda,R) \left( \Vert \omega^k\Vert^{\frac{p_i}{p_i-1}}_{L^t(B_R)}+ \Vert (u^\eta)_{x_i} \Vert^{\frac{p_i}{p_i-1}}_{L^{p_i}(B_R)} \right)
    , 
\end{align}
where we also used H\"older's and Young's inequalities and that $\omega \in L^{t}(B_R) \subset L^\frac{\overline{p}^*}{\overline{p}^*-1}(B_R)$. Putting the previous estimate in \eqref{Poincare} and reabsorbing the term $\sum_{i=1}^{n} \int_{B_R}\, a_i^k(x) \lvert(u^\eta_{k,\varepsilon})_{x_i} \rvert^{p_i} \ dx $ from the right hand side by the left hand side, we get
\begin{align*}
    \sum_{i=1}^{n} & \int_{B_R}\, a_i^k(x)\lvert(u^\eta_{k,\varepsilon})_{x_i} \rvert^{p_i} \ dx \notag\\
    \le & \, 2\int_{B_R} \left(f^k(x,Du^\eta) +\varepsilon(1+|Du^\eta|^2)^{\frac{p_n}{2}}   \right) dx\notag\\
    & + \sum_{i=1}^{n} c(n,p_i,\lambda,R) \left( \Vert \omega^k\Vert^{\frac{p_i}{p_i-1}}_{L^t(B_R)}+ \Vert (u^\eta)_{x_i} \Vert^{\frac{p_i}{p_i-1}}_{L^{p_i}(B_R)} \right).
\end{align*}
Inserting the previous inequality in \eqref{der}, we obtain
\begin{align}
     & \sum_{i=1}^{n}  \int_{B_{R/4}}|(u^\eta_{k,\varepsilon})_{x_i}|^{p_i +2 } dx \notag \\
     &  \leq c \Bigg(1+ \int_{B_R} \left(f^k(x,Du^\eta) +\varepsilon(1+|Du^\eta|^2)^{\frac{p_n}{2}}   \right) dx + \Vert g_k \Vert_{L^r (B_{R})}  \notag\\
     & \qquad\quad +\sum_{i=1}^{n} \left(\Vert \omega^k\Vert^{\frac{p_i}{p_i-1}}_{L^t(B_R)}+  {\Vert \omega^k_{x_i} \Vert_{L^\frac{p_i+2}{p_i+1}(B_{R})}} + \Vert (u^\eta)_{x_i} \Vert^{\frac{p_i}{p_i-1}}_{L^{p_i}(B_R)} \right)\Bigg)^\sigma,
\end{align}
for a positive constant $c$ independent of $k$, $\varepsilon$ ans $\eta$. 
Since $u^\eta \in W^{1,p_n}(B_R)$, the right-hand side of the previous estimate is finite, and therefore, the left-hand side of the previous estimate is uniformly bounded  with respect to $\varepsilon$, for $\varepsilon \in (0,1)$. Therefore there exists $v^\eta_k$ such that, up to a subsequence,
    $$u^\eta_{k,\varepsilon} \rightharpoonup v^\eta_k \text{  weakly in  } W^{1, \textbf{p}+2}(B_R) \, \text{ as  } \varepsilon \to 0.$$
By the weak lower semicontinuity of the norm, we get
{\begin{align}\label{stimaconv1}
    &\sum_{i=1}^{n}   \int_{B_{R/4}}|(v^\eta_k)_{x_i}|^{p_i +2 } dx  \notag\\
    &\leq \liminf_{\varepsilon}\sum_{i=1}^{n}   \int_{B_{R/4}}|(u^\eta_{k,\varepsilon})_{x_i}|^{p_i +2 } dx  \notag \\
    &  \le c \Bigg(1+ \int_{B_R} f^k(x,Du^\eta)  dx + \Vert g_k \Vert_{L^r (B_{R})}  \notag\\
     & \qquad \quad + \sum_{i=1}^{n}  \left({\Vert \omega^k_{x_i} \Vert_{L^\frac{p_i+2}{p_i+1}(B_{R})}} +\Vert \omega^k\Vert^{\frac{p_i}{p_i-1}}_{L^r(B_R)}+ \Vert (u^\eta)_{x_i} \Vert^{\frac{p_i}{p_i-1}}_{L^{p_i}(B_R)} \right)\Bigg)^\sigma,
\end{align}}
where we used that, since $u^\eta \in W^{1, p_n}(\Omega)$, it holds 
\begin{equation}\label{fkappalim}
\liminf_{\varepsilon \to 0} \, \varepsilon \int_{B_R}(1+|Du^\eta|^2)^{\frac{p_n}{2}} dx =0. 
\end{equation}
 Using the definition of $f_k$ at \eqref{fvarepsilon}, we have that
\begin{equation}\label{convFk}
    \left|  \int_{B_R}   f^k(x,Du^\eta) -  f(x,Du^\eta) dx \right|\leq \sum_{i=1}^n \int_{B_R} |a^k_i(x)- a_i(x)|\, | u^\eta_{x_i}|^{p_i} dx .
\end{equation}
 So the fact that 
 $$a^k_i(x) \to a_i(x)\quad \text{uniformly on compact sets }$$ gives that
$$\lim_{k \to 0^+} \sum_{i=1}^n \lvert \lvert a^k_i(x)- a_i(x) \rvert \rvert_{\infty} \int_{B_R} \, | u^{\eta}_{x_i}|^{p_i} dx =0. $$
Therefore, passing to the limit as $k \to 0^+$ in \eqref{convFk}, yields
\begin{equation}\label{fktoF}
   \int_{B_R} f^k(x, Du^\eta) dx \to \int_{B_R}f(x,Du^\eta) dx.
    \end{equation}
By the properties of mollification it holds $g_k \to g$ in $L^r(B_R)$, where $g_k$ has been found at \eqref{gk} and $g$ is the function appearing in \eqref{A4}. So, by \eqref{fktoF}, the right-hand side of \eqref{stimaconv1} is uniformly bounded w.r.t.\ to $k$. Therefore there exists $v^\eta$ such that, up to a subsequence,
    $$v^\eta_{k} \rightharpoonup v^\eta \text{  weakly in  } W^{1, \textbf{p}+2}(B_R) \, \text{ as  } k \to 0.$$
Again by the weak lower semicontinuity of the norm, \eqref{stimaconv1}, \eqref{fktoF} and the property of mollification, we get
{\begin{align}\label{stimaconv2}
    &\sum_{i=1}^{n} \left(  \int_{B_{R/4}}|(v^\eta)_{x_i}|^{p_i +2 } dx \right)\notag\\
    &\leq \liminf_{k}\sum_{i=1}^{n} \left(  \int_{B_{R/4}}|(v^\eta_{k})_{x_i}|^{p_i +2 } dx \right) \notag \\
    &  \le c \Bigg( 1+\int_{B_R} f(x,Du^\eta)  dx + \Vert g \Vert_{L^r (B_{R})}  \notag\\
     & \qquad \quad +  \sum_{i=1}^{n} \left( {\Vert \omega_{x_i} \Vert_{L^\frac{p_i+2}{p_i+1}(B_{R})}} +\Vert \omega\Vert^{\frac{p_i}{p_i-1}}_{L^\frac{\overline{p}^*}{\overline{p}^*-1}(B_R)}+ \Vert (u^\eta)_{x_i} \Vert^{\frac{p_i}{p_i-1}}_{L^{p_i}(B_R)} \right)\Bigg)^\sigma.
\end{align}}
Now, we note that
\begin{align}\label{convF}
    \left|  \int_{B_R}   f(x,Du^\eta) -  f(x,Du) dx \right| &\leq \left| \sum_{i=1}^n \int_{B_R} \Big( a_i(x)\,  |u^\eta_{x_i}|^{p_i}-a_i(x)  |u_{x_i}|^{p_i} \Big)dx \right| \notag \\
     &\leq  \sum_{i=1}^n \int_{B_R}a_i(x)\,\Big|  |u^\eta_{x_i}|^{p_i} -|u_{x_i}|^{p_i} \Big|dx \notag \\
    &\leq \overline{c } \sum_{i=1}^n \int_{B_R}a_i(x)\,\left(  |u^\eta_{x_i}| +|u_{x_i}| \right)^{p_i-1} \,  |u^\eta_{x_i}- u_{x_i}| dx \notag \\
    &\leq \sum_{i=1}^n  c(\Lambda, p_i) \,\int_{B_R} \left(  |u^\eta_{x_i}| +|u_{x_i}| \right)^{p_i-1} \,  |u^\eta_{x_i}- u_{x_i}| dx  ,
\end{align}
where we used the left inequality in the  Lemma \ref{Duzaar}, with $\xi=|u_{x_i}|$ and $\eta=|u_{x_i}^\eta|$ and $\alpha= p_i$.\\
{Since, by property of mollification, it holds
and
$$ u^\eta_{x_i} \to u_{x_i}  \text{ strongly in }L^{p_i},$$}
passing to the limit as $\eta \to 0^+$ in \eqref{convF}, we have
\begin{equation}\label{IntfDuni}
    \int_{B_R}f(x, Du^\eta) \, dx\to  \int_{B_R}f(x,Du) \, dx .
    \end{equation}
 By virtue of \eqref{IntfDuni} and the fact that $u^\eta \to u$ strongly in $L^{p_i}$,  we observe that the right-hand side of \eqref{stimaconv2} is uniformly bounded w.r.t.\ $\eta$. Therefore there exists $v$ such that, up to a subsequence,
    $$v^\eta \rightharpoonup v \text{  weakly in  } W^{1, \textbf{p}+2}(B_R) \, \text{ as  } \eta \to 0.$$
Once again, by the weak lower semicontinuity of the norm, we derive
\begin{align}\label{stimaconv3}
    &\sum_{i=1}^{n} \left(  \int_{B_{R/4}}|v_{x_i}|^{p_i +2 } dx \right)  \notag\\
    &\leq \liminf_{\eta}\sum_{i=1}^{n} \left(  \int_{B_{R/4}}|(v^\eta)_{x_i}|^{p_i +2 } dx \right) \notag \\
    & \leq   c \liminf_{\eta} \Bigg( 1+\int_{B_R} \left(f(x,Du^\eta) \right) dx + \Vert g \Vert_{L^r (B_{R})}  \notag\\
     & \qquad \quad + \sum_{i=1}^{n}  \left({\Vert \omega_{x_i} \Vert_{L^\frac{p_i+2}{p_i+1}(B_{R})}} +\Vert \omega\Vert^{\frac{p_i}{p_i-1}}_{L^t(B_R)}+ \Vert (u^\eta)_{x_i} \Vert^{\frac{p_i}{p_i-1}}_{L^{p_i}(B_R)} \right)\Bigg)^\sigma \notag \\
    &  = c \Bigg( 1+\int_{B_R} \left(f(x,Du)\right) dx + \Vert g \Vert_{L^r (B_{R})}  \notag\\
     & \qquad \quad + \sum_{i=1}^{n} \left( {\Vert \omega_{x_i} \Vert_{L^\frac{p_i+2}{p_i+1}(B_{R})}} +\Vert \omega\Vert^{\frac{p_i}{p_i-1}}_{L^t(B_R)}+ \Vert u_{x_i} \Vert^{\frac{p_i}{p_i-1}}_{L^{p_i}(B_R)} \right)\Bigg)^\sigma.
\end{align}
In order to conclude the proof, it suffices to show that $u=v$ a.e. in $B_R$.\\
By the weak lower semicontinuity of the functionals  $\mathcal{F}$ and $\mathcal{F}^k$ in $W^{1, \textbf{p}}(B_R)$ , the weak convergence of $v^\eta \rightharpoonup v$ in $W^{1, \textbf{p}+2}(B_R)$, \eqref{fktoF} and $v^\eta_k \rightharpoonup v^\eta$ in $W^{1, \textbf{p}+2}(B_R)$, we get
\begin{align}
    &\int_{B_R}\left(f(x, Dv)-\omega(x)v \right) dx \notag\\
    & \leq \liminf_{\eta} \int_{B_R} \left(f(x, Dv^\eta) -\omega(x)v^\eta \right)dx \notag\\
    &\leq \liminf_{\eta} \liminf_{k} \int_{B_R} \left(f^k(x, Dv^\eta)-\omega(x)v^\eta \right) dx \notag \\   
    & \leq \liminf_{\eta}\liminf_{k} \int_{B_R} \left( f^k(x, Dv^\eta_k)-\omega(x)v^\eta_k \right) dx \notag \\
     & \leq \liminf_{\eta}\liminf_{k} \int_{B_R} \left( f^k(x, Dv^\eta_k) )+ \varepsilon(1+ |Du^\eta_{k, \varepsilon}|^2)^{\frac{p_n}{2}} -\omega(x)v^\eta_k  \right)dx, \notag 
    \end{align}
   where the last inequality is trivial, by the non-negativity of the last term.\\
    Thanks to the weak convergence of $u^\eta_{k, \varepsilon} \rightharpoonup v^\eta_k$ in $W^{1, \textbf{p}+2}(B_R)$ and again the weak lower semicontinuity of functional $\mathcal{F}^k_\varepsilon$, we have 
   \begin{align}\label{pallino}
    &\int_{B_R}\left(f(x, Dv)-\omega(x)v \right)  dx \notag\\
    &\leq \liminf_{\eta}\liminf_{k}\liminf_{\varepsilon} \int_{B_R} \left( f^k(x, Du^\eta_{k, \varepsilon})+ \varepsilon(1+ |Du^\eta_{k, \varepsilon}|^2)^{\frac{p_n}{2}}  -\omega(x)u^\eta_{k,\varepsilon}\right) dx \notag \\
     & \leq \liminf_{\eta}\liminf_{k}\liminf_{\varepsilon} \int_{B_R} \left( f^k(x, Du^\eta)+ \varepsilon(1+ |Du^\eta|^2)^{\frac{p_n}{2}}  -\omega(x)u^\eta\right)dx \notag \\
      & = \liminf_{\eta}\liminf_{k} \int_{B_R}\left( f^k(x, Du^\eta)  -\omega(x)u^\eta \right)dx , 
      \end{align} 
      where we used the minimality of $u^\eta_{k, \varepsilon}$ and \eqref{fkappalim}.\\
      By \eqref{pallino},  \eqref{fktoF} and \eqref{IntfDuni}, we derive
      {\begin{align}
    &\int_{B_R}\left(f(x, Dv)-\omega(x)v \right) dx\notag\\
    &\leq \liminf_{\eta} \int_{B_R} \left(f(x, Du^\eta)- \omega(x)u^\eta \right) dx \notag\\
    &\leq  \int_{B_R} \left( f(x, Du)-\omega(x)u \right) dx,
\end{align}}
and then, by minimality of  $u$,  we infer
\begin{equation*}
     \int_{B_R} \left(f(x, Dv)-\omega(x)v \right) dx 
     =\int_{B_R}  \left( f(x, Du)-\omega(x)u \right) dx.
\end{equation*}
By the strict convexity of $\xi \to f(x, \xi)$, we deduce that $u=v$. Then $u \in W^{1, \textbf{p}+2}$ and so we can argue as in the proof of Theorem \ref{AppThm}, thus obtaining 
\begin{align}
       &\sum_{i=1}^{n}   \left( \int_{B_{\frac{R}{4}}} |u_{x_i x_j}|^2|u_{x_i}|^{p_i-2} dx \right)\notag\\
       &  \le c \Bigg( 1+\int_{B_R} \left(f(x,Du)  \right) dx + \Vert g \Vert_{L^r (B_{R})}  \notag\\
     & \qquad \quad  +\sum_{i=1}^{n} \left( {\Vert \omega_{x_i} \Vert_{L^\frac{p_i+2}{p_i+1}(B_{R})}} +\Vert \omega\Vert^{\frac{p_i}{p_i-1}}_{L^t(B_R)}+ \Vert u_{x_i} \Vert^{\frac{p_i}{p_i-1}}_{L^{p_i}(B_R)}\right)\Bigg)^\sigma \notag\\
     &  \le c \Bigg(  1+   \sum_{i=1}^{n} \left( {\Vert \omega_{x_i} \Vert_{L^\frac{p_i+2}{p_i+1}(B_{R})}} +\Vert \omega\Vert_{L^t(B_R)}+ \Vert u_{x_i} \Vert_{L^{p_i}(B_R)}\right)+ \Vert g \Vert_{L^r (B_{R})}\Bigg)^{\tilde{\sigma}}
\end{align}
and this conclude the proof.
\end{proof}
}

\vskip0.5cm

\noindent {{\bf Acknowledgements.} The authors are members of the Gruppo Nazionale per l'Analisi Matematica, la Probabilità e le loro Applicazioni (GNAMPA) of the Istituto Nazionale di Alta Matematica (INdAM). A.G.\ Grimaldi and S.\ Russo have been supported through the INdAM - GNAMPA 2025 Project "Regolarità di soluzioni di equazioni paraboliche a crescita nonstandard degeneri" (CUP: E5324001950001).
In addition S.\ Russo has also been supported through the project: Sustainable Mobility Center (Centro Nazionale per la Mobilità Sostenibile – CNMS) - SPOKE 10. This manuscript reflects only the author's views and opinions, and the Ministry cannot be considered responsible for them.}\\

\vspace{0,5cm}

\noindent{On behalf of all authors, the corresponding author states that there is no conflict of interest.}

\end{document}